\DeclareMathAlphabet{\pazocal}{OMS}{zplm}{m}{n}
\newtheorem{theorem}{Theorem}[section]
\newtheorem{lemma}[theorem]{Lemma}
\newtheorem{proposition}[theorem]{Proposition}
\newtheorem{claim}{Claim}
\theoremstyle{definition}
\newtheorem{definition}[theorem]{Definition}
\theoremstyle{remark}
\newtheorem{remark}[theorem]{Remark}
\numberwithin{equation}{section}
\newcommand{\R}{\ensuremath{\mathbb{R}}}
\newcommand{\N}{\ensuremath{\mathbb{N}}}
\newcommand{\set}[1]{\left\{#1\right\}}
\newcommand{\fr}[1]{\langle #1\rangle }
\newcommand{\fl}[1]{\lfloor #1\rfloor }
\newcommand{\la}{\lambda}
\newcommand{\f}{\infty}
\newcommand{\al}{\alpha}
\newcommand{\lf}{\lfloor}
\newcommand{\rf}{\rfloor}
\begin{document}
\title{Approximation properties of the intermediate $\beta$-expansions}

\author[K. Dajani]{Karma Dajani}
\address[K. Dajani]{Department of Mathematics, Utrecht University,  P.O. Box 80010, 3508TA Utrecht, The Netherlands.}
\email{k.dajani1@uu.nl}

\author[Y. Huang]{Yan Huang}
\address[Y. Huang]
{School of Mathematics and Statistics, Wuhan University, Wuhan 430072, Hubei, People's Republic of China.}
\email{yanhuangyh@126.com}



\subjclass[2010]{11K55, 28D05, 11A67, 37A10}

\begin{abstract}
Given $\beta>1$ and $\alpha\in[0,1)$, let $T_{\beta, \alpha}(x)=\beta x+\alpha\pmod 1$. Then under the map $T_{\beta,\alpha}$ each $x\in[0,1]$ has an \emph{intermediate $\beta$-expansion} of the form $x=\sum_{i=1}^\infty\frac{c_i-\alpha}{\beta^i}$  {with each $c_i\in\{0,1,\ldots,\lf \beta+\alpha\rf\}$}.
In this paper we study the approximation properties of $T_{\beta,\alpha}$ by considering the expected value $M_\beta(\alpha)$ of the  \emph{normalized errors} $(\theta_{\beta,\alpha}^n(x))_{n\geq 1}$, where
$$\theta_{\beta,\alpha}^n(x):=\beta^n\left(x-\sum_{i=1}^n\frac{c_i-\alpha}{\beta^i}\right),\quad n\in\mathbb{N}.$$
We prove that  $M_\beta(\cdot)$ is continuous on   $[0,1)$.
As a result,  
$\mathcal{M_\beta}:=\{M_\beta(\alpha):\alpha\in[0,1)\}$ is a closed interval.
In particular, if $\beta$ is a multinacci number, the map $T_{\beta,\alpha}$ has matching for Lebesgue almost every $\alpha\in[0,1)$, and then 
 $M_\beta(\cdot)$ is   locally linear almost everywhere on $[0,1)$. 
 \end{abstract}
\keywords{intermediate $\beta$-expansions; approximation properties; matching; normalized errors; invariant ergodic measures}
\maketitle

\section{Introduction}\label{sec: Introduction}
The study of non-integer base expansions were initiated by R\'enyi \cite{Renyi-1957} and Parry \cite{Parry-1960, Parry-1964}.
Since then, expansions in non-integer bases have received much attention and have connections with many areas of mathematics such as ergodic theory, fractal geometry, symbolic dynamics, and number theory.
They have gained momentum after the surprising discovery by Erd\H os et al.~\cite{Erdos_Horvath_Joo_1991, Erdos_Joo_1992} that for any $k\in\N\cup\set{\aleph_0}$ there exist $\beta\in(1,2]$ and $x\in[0, 1/(\beta-1)]$ such that $x$ has precisely $k$ different $\beta$-expansions. This phenomenon is completely different from the integer base expansions where each $x$ has a unique expansion, except for countably many $x$ having two expansions. Typically each point in $[0,1]$ has uncountably many representations \cite{Sidorov-2003}. The largest one in the lexicographic order is called the \emph{greedy expansion} and the smallest one  is called the \emph{lazy expansion}. An interesting feature of these extreme cases is that they can be generated dynamically by iterating the so-called \emph{greedy $\beta$-transformation} and \emph{lazy $\beta$-transformation}, respectively. It is natural to  ask which one has better approximation properties.
It has been proven that “on average”
the greedy convergents  $\left(\sum_{i=1}^{n}\frac{a_i}{\beta^i}\right)_{n\geq 1}$ approximate $x$ better than the lazy convergents
$\left(\sum_{i=1}^{n}\frac{b_i}{\beta^i}\right)_{n\geq 1}$ for almost every $x\in [0,1]$ in \cite{Dajani-Cor-2002}. So, what about those expansions between greedy and lazy which are generated by iterating an appropriate transformation? There are some results that focus on the Diophantine approximation properties of $\beta$-expansions, see \cite{Lv-2013,Ma-Wang-2015,Wang-Li-2020}. Baker also studied the approximation properties of $\beta$-expansions in \cite{Baker-2015,Baker-2018}. 

In this paper, we address this question through an analysis of approximation properties in \emph{intermediate $\beta$-expansions}. 
These expansions were first introduced by R\'{e}nyi in 1957 \cite{Renyi-1957} and can be dynamically generated by iterating the \emph{intermediate $\beta$-transformations}. 
We begin by presenting the formal definition and fundamental properties of these transformations.

Given $\beta > 1$ and $\alpha \in [0,1)$, the intermediate $\beta$-transformation $T_{\beta,\alpha}: [0,1] \to [0,1]$ is defined by
\begin{equation}\label{eq:map-T}
T_{\beta,\alpha}(x) =\beta x +\alpha \pmod 1=\beta x+\al-\lf\beta x +\al\rf.
\end{equation}
For a real number $r$ we denote by $\fr{r}$ and $\fl{r}$ its fractional   and integer parts, respectively. Then by (\ref{eq:map-T}) it follows that for $\alpha\in[0,1-\fr{\beta})$,  
\begin{equation}\label{eq:def-T-leq-1}
\begin{split}
T_{\beta,\alpha}(x)=\left\{
\begin{array}{ll}
\beta x+\alpha,  \quad &\textrm{if}\quad x\in[0,\frac{1-\alpha}{\beta});\\
\beta x+\alpha-i,\quad &\textrm{if}\quad  x\in[\frac{i-\alpha}{\beta},\frac{i+1-\alpha}{\beta}),\quad i\in\{1,\ldots , \lf \beta\rf-1\};\\
\beta x+\alpha-\lf \beta\rf,\quad &\textrm{if}\quad  x\in[\frac{\lf \beta\rf-\alpha}{\beta},1];\\
\end{array}
\right.
\end{split}
\end{equation}
and for $\alpha\in[1-\fr{\beta},1)$ we have
\begin{equation}\label{eq:def-T-geq-1}
\begin{split}
T_{\beta,\alpha}(x)=\left\{
\begin{array}{ll}
\beta x+\alpha, \quad &\textrm{if}\quad x\in[0,\frac{1-\alpha}{\beta});\\
\beta x+\alpha-i,\quad &\textrm{if}\quad x\in[\frac{i-\alpha}{\beta},\frac{i+1-\alpha}{\beta}),\quad i\in\{1,\ldots , \lf \beta\rf\};\\
\beta x+\alpha-\lf \beta\rf-1,\quad &\textrm{if}\quad x\in[\frac{\lf \beta\rf+1-\alpha}{\beta},1].\\
\end{array}
\right.
\end{split}
\end{equation}
Clearly, the map $T_{\beta,\alpha}$ has $\lf\beta\rf+1$ branches if $\alpha\in[0,1-\fr{\beta})$, and has $\lf\beta\rf+2$ branches if $\alpha\in[1-\fr{\beta}, 1)$. See Figure \ref{f:T-beta-alpha} for an illustration.

\begin{figure}[h]
\begin{tikzpicture}[scale=3.1]
\draw(-.01,0)node[below]{\footnotesize $0$}--(5/9-5/90,0)node[below]{\footnotesize $\frac{1-\alpha}{\beta}$}--(1,0)node[below]{$1$};
\draw(0,-.01)--(0,1)node[left]{\footnotesize $1$};
\draw[thick, green!60!black] 
(0,0.1)--(5/9-5/90,1)(5/9-5/90,0)--(1,9/10);
\draw[dotted](0,1)--(1,1)--(1,0)(5/9-5/90,0)--(5/9-5/90,1);
\end{tikzpicture}
\quad
\begin{tikzpicture}[scale=3.1]
\draw(-.01,0)node[below]{\footnotesize $0$}--(5/9-15/90,0)node[below]{\footnotesize $\frac{1-\alpha}{\beta}$}--(10/9-15/90,0)node[below]{\footnotesize $\frac{2-\alpha}{\beta}$}--(1,0);
\draw(0,-.01)--(0,1)node[left]{\footnotesize $1$};
\draw[thick, green!60!black] 
(0,0.3)--(5/9-15/90,1)(5/9-15/90,0)--(10/9-15/90,1)(10/9-15/90,0)--(1,0.1);
\draw[dotted](0,1)--(1,1)--(1,0)(5/9-15/90,0)--(5/9-15/90,1)(10/9-15/90,0)--(10/9-15/90,1);
\end{tikzpicture}
\quad
{\caption{Fix $\beta=1.8$. The left figure shows the image of the transformation $T_{\beta,\alpha}$ for $\alpha=0.1 \in [0,1-\langle\beta\rangle)$, and the right figure for $\alpha=0.3 \in [1-\langle\beta\rangle,1)$.}}
\label{f:T-beta-alpha}
\end{figure}

It is well known that, in general, $T_{\beta,\alpha}$ does not  preserve Lebesgue measure and  {is} not a subshift of finite type with mixing properties.
This causes difficulties in studying metrical questions related to $\beta$-expansions. We mention few results regarding these transformations.
Hofbauer proved that for any $\alpha\in[0,1)$ the topological entropy of $T_{\beta,\alpha}$ is $\log\beta$, and $T_{\beta,\alpha}$ has a unique invariant probability measure of maximal entropy in \cite{Hofbauer-1980}. This maximal measure is absolutely continuous with
respect to the Lebesgue measure, and its support is a finite union of intervals (Hofbauer \cite{Hofbauer-1978, Hofbauer-1981}). Its density function was given by Parry in \cite{Parry-1960}, and was later proved to be non-negative by Halfin in \cite{Halfin-1975}.
Concerning the ergodic properties of $T_{\beta,\alpha}$, Wilkinson \cite{Wilkinson-1974, Wilkinson-1974-1975}
showed that $T_{\beta,\alpha}$ is weak-Bernoulli with respect to the maximal measure when $\beta > 2$, and
this was extended first by Parry \cite{Parry-1978} to $\beta > \sqrt 2$.   However, this is not true for $\beta\in(1,\sqrt{2}]$. Palmer \cite{Flatto-Lagarias-1996} characterized
all pairs {$(\beta,\alpha)\in (1,\sqrt 2]\times [0,1)$} in which $T_{\beta,\alpha}$   is weak-Bernoulli, and proved that when the pair  $(\beta,\alpha)$ is not in the ``bubble", the map $T_{\beta,\alpha}$ is weak-Bernoulli. Importantly,   the map  $T_{\beta,\alpha}$ is ergodic for any $\beta>\sqrt 2$ and $\alpha\in[0,1)$ (cf.~\cite{Cabane-1979}).

 The following Theorem describes the density function of the unique $T_{\beta,\alpha}$-invariant probability measure of maximal entropy (see Parry \cite{Parry-1964}).
 Throughout this article, we denote the Lebesgue measure on $\R$ by $\lambda$.
\begin{theorem}[Parry, 1964] \label{thm:T-ergodic-density}
Given $\beta>1$ and $ \alpha\in[0,1)$, let $T_{\beta,\alpha}(x)=\beta x +\alpha \pmod 1$.
Then  there exists a unique $T_{\beta,\alpha}$-invariant probability measure $\nu_{\beta,\alpha}$ on $[0,1]$  such that
\[\nu_{\beta,\alpha}(E)=\int_E g_{\beta,\alpha}(x) \mathrm{d}\lambda(x)\]
  for all Borel set $E\subset[0,1]$, where the density function $g_{\beta,\alpha}$ is given by
\begin{equation}\label{eq:density-normal}
g_{\beta,\alpha}(x)=\frac{\tilde g_{\beta,\alpha}(x)}{\int_{[0,1]}\tilde g_{\beta,\alpha}d\lambda}\quad  \text{with}\quad
\tilde g_{\beta,\alpha}(x)=\sum_{x<T_{\beta,\alpha}^n(1)}\frac{1}{\beta^n}-\sum_{x<T_{\beta,\alpha}^n(0)}\frac{1}{\beta^n}.
\end{equation}
Furthermore, 
\begin{enumerate}[{\rm(i)}]
  \item $\tilde g_{\beta,\alpha}(x)\geq 0$ for $\lambda$ almost every $x\in[0,1]$.
  \item If $\beta> \sqrt 2$ and $\alpha\in[0,1)$, then   $T_{\beta,\alpha}$ is ergodic with respect to $\nu_{\beta,\alpha}$.
\end{enumerate}
\end{theorem}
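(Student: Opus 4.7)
The plan is to construct the density via the Perron--Frobenius (transfer) operator $\L_{\beta,\al}$ associated with $T_{\beta,\al}$, namely
\[
\L_{\beta,\al}h(x)=\frac{1}{\beta}\sum_{y\in T_{\beta,\al}^{-1}\{x\}}h(y).
\]
An absolutely continuous $T_{\beta,\al}$-invariant probability measure corresponds precisely to a non-negative $L^1$ fixed point of $\L_{\beta,\al}$ normalised to integrate to $1$, so the whole problem reduces to exhibiting such a fixed point and proving its uniqueness and non-negativity.

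First I would verify that the candidate $\tilde g_{\beta,\al}$ defined in (\ref{eq:density-normal}) satisfies $\L_{\beta,\al}\tilde g_{\beta,\al}=\tilde g_{\beta,\al}$. The function $\tilde g_{\beta,\al}$ is a $\beta^{-n}$-weighted signed sum of indicators $\mathbf{1}_{[0,T_{\beta,\al}^n(1))}-\mathbf{1}_{[0,T_{\beta,\al}^n(0))}$, and a direct computation using (\ref{eq:def-T-leq-1})--(\ref{eq:def-T-geq-1}) shows that on each monotone branch $\L_{\beta,\al}$ sends $\mathbf{1}_{[0,y)}$ to $\frac{1}{\beta}\mathbf{1}_{[0,T_{\beta,\al}(y))}$ plus boundary correction terms supported at the images of $0$ and $1$. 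Summing over $n$ and using $T_{\beta,\al}(T_{\beta,\al}^n(j))=T_{\beta,\al}^{n+1}(j)$ for $j\in\{0,1\}$ produces the required telescoping. I expect the bookkeeping of the two cases $\al\in[0,1-\fr{\beta})$ and $\al\in[1-\fr{\beta},1)$, and in particular the treatment of the short outermost branch in each case, to be the main technical obstacle in this step.

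For non-negativity statement (i) I would follow Halfin's pairing argument: for each $x$, one matches the indices contributing $+\beta^{-n}$ (those with $x<T_{\beta,\al}^n(1)$) with those contributing $-\beta^{-n}$ (those with $x<T_{\beta,\al}^n(0)$) in an order-preserving way, exploiting the monotonicity of $T_{\beta,\al}$ on each branch, so that every negative term is dominated by an earlier, hence strictly larger, positive term. Uniqueness of the invariant density then follows because $\tilde g_{\beta,\al}$ has bounded variation (being a summable signed combination of indicators with weights $\beta^{-n}$) and $T_{\beta,\al}$ is uniformly expanding, placing us in the Lasota--Yorke framework: $\L_{\beta,\al}$ is quasi-compact on $BV$, so its $1$-eigenspace is finite-dimensional and, combined with non-negativity and normalisation, singles out $g_{\beta,\al}$.

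Finally, for (ii) I would use the quasi-compactness of $\L_{\beta,\al}$ on $BV$ to decompose $g_{\beta,\al}$ into finitely many ergodic components, each supported on a finite union of intervals, and then show that for $\beta>\sqrt 2$ only one component appears. The threshold $\sqrt 2$ is sharp here because for $\beta\in(1,\sqrt 2]$ the ``bubble" of Palmer \cite{Flatto-Lagarias-1996} contains parameter pairs where $T_{\beta,\al}$ decomposes. Above this threshold, following Cabane \cite{Cabane-1979}, one argues that any $T_{\beta,\al}$-invariant subset of positive $\nu_{\beta,\al}$-measure must exhaust the support of $g_{\beta,\al}$, yielding the ergodicity claim.
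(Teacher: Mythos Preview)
The paper does not supply its own proof of Theorem~\ref{thm:T-ergodic-density}; it is stated as a background result attributed to Parry~\cite{Parry-1964}, with the non-negativity (i) credited to Halfin~\cite{Halfin-1975} and the ergodicity (ii) for $\beta>\sqrt 2$ to Cabane~\cite{Cabane-1979}. There is therefore nothing in the paper to compare your argument against line by line.

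That said, your outline tracks exactly the chain of references the paper invokes: verifying that $\tilde g_{\beta,\al}$ is a fixed point of the transfer operator is Parry's original computation; the pairing argument you sketch for (i) is Halfin's; quasi-compactness on $BV$ via Lasota--Yorke is the standard route to uniqueness for piecewise expanding maps; and your appeal to Cabane and the Palmer bubble for the $\sqrt 2$ threshold matches the discussion preceding the theorem in the paper. As a proof \emph{plan} this is correct and faithful to the literature, though each step you describe is itself a nontrivial paper-length argument rather than a short verification---in particular the Halfin pairing and Cabane's ergodicity proof are not things one would reproduce in a couple of lines. If you intend this as a citation-backed summary rather than a self-contained proof, it is adequate; if you intend a full proof, substantial detail is still owed at every stage.
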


Given $\beta>1$ and $\alpha\in[0,1)$, recall that every $x\in[0,1]$ admits an intermediate $\beta$-expansion generated by $T_{\beta,\alpha}$. 
We now formally describe the dynamical mechanism through which this transformation produces expansions. For $x\in[0,1]$ and $n\ge 0$ let 
\begin{equation}\label{eq:digit-T-beta-alpha}
c_{n+1}:=\fl{\beta T_{\beta,\alpha}^n(x)+\alpha}.
\end{equation}
Then by (\ref{eq:map-T}) it yields that  $T_{\beta, \alpha}(x)=\beta x+\alpha-c_1$, which gives
$
x=\frac{T_{\beta,\alpha}(x)}{\beta}+\frac{c_1-\alpha}{\beta}.
$
Note that $T_{\beta,\alpha}(x)\in[0,1]$. Replacing $x$ by $T_{\beta,\alpha}(x)$ in the above argument it follows that 
$
x=\frac{T_{\beta,\alpha}^2(x)}{\beta^2}+\frac{c_2-\alpha}{\beta^2}+\frac{c_1-\alpha}{\beta}.
$
Iterating this argument, we obtain the  intermediate $\beta$-expansion of  $x$ by
\begin{equation}\label{eq:def-beta-expansion}
x=\sum_{i\geq1}\frac{c_i-\alpha}{\beta^i},\quad c_i\in\{0,1,\ldots ,\lf \beta+\alpha \rf\}.
\end{equation}
The infinite sequence $(c_i)$ is also called the intermediate $\beta$-expansion of $x$ with parameter $\alpha$.

In \cite{optimal-Dajani-Martijn-Vilmos-Paola-2012} Dajani et al.~introduced the \emph{normalized errors} of $x$ with respect to $T_{\beta,\alpha}$, which are defined by 
\begin{equation*}
{\theta_{\beta,\alpha}^n(x)}:=T_{\beta,\alpha}^n(x)=\beta^n\left(x-\sum_{i=1}^n\frac{c_i-\alpha}{\beta^i}\right),\quad n\in\mathbb{N}.
\end{equation*}
Note that the sequence  $\left(\theta_{\beta,\alpha}^n(x)\right)_{n\geq 1}$  reveals the approximation efficiency of the convergents $\left(\sum_{i=1}^n\frac{c_i-\alpha}{\beta^i}\right)_{n\geq 1}$ to the point $x$. If $\beta>\sqrt{2}$ and $\alpha\in[0,1)$, then by Theorem \ref{thm:T-ergodic-density} and a straightforward application of Birkhoff's ergodic theorem it follows that for $\nu_{\beta,\alpha}$ almost every $x\in[0,1]$ we have
\begin{equation}\label{eq:def-M}
\lim_{n\to\infty}\frac{1}{n}\sum_{i=0}^{n-1}\theta_{\beta,\alpha}^i(x)=\int_{[0,1)}xg_{\beta,\alpha}(x)d\lambda(x)=:M_{\beta}(\alpha).
\end{equation}
The smaller $M_{\beta}(\alpha)$ is, the better  approximation efficiency of   $T_{\beta,\alpha}$ is. So, the quantity $M_{\beta}(\alpha)$ characterizes the approximation efficiency of $T_{\beta,\alpha}$. Observe by (\ref{eq:map-T}) that for 
 $\alpha=0$, the map $T_{\beta,0}$ yields the greedy $\beta$-expansion of $x$; while for $\alpha=1-\fr{\beta}$ the map $T_{\beta,1-\fr{\beta}}$ gives the lazy $\beta$-expansion of $x$. Dajani et al.~\cite{optimal-Dajani-Martijn-Vilmos-Paola-2012} showed that $M_{\beta}(0)<M_{\beta}(1-\fr{\beta})$ for any $\beta>1$. This means in general the greedy $\beta$-transformation has better approximation properties than the lazy $\beta$-transformation.  
In general, what can we say about the approximation efficiency of  $T_{\beta,\alpha}$ for a general $\alpha\in[0,1)$? This will be our main object to study in this paper.

Our first   result states that $M_\beta$ is continuous.
\begin{theorem}\label{thm:main-1-continuous}
Let  $\beta>\sqrt 2$. Then $M_{\beta}$ is continuous on $[0,1)$. Furthermore, $M_\beta(\alpha)+M_\beta(1-\fr{\beta+\alpha})=1$ for any $\alpha\in[0,1)$.
\end{theorem}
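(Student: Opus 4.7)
My plan is to prove the two assertions separately. For the functional equation, set $\alpha^{*}:=1-\fr{\beta+\alpha}\in[0,1)$. Then $\beta+\alpha+\alpha^{*}=1+\fl{\beta+\alpha}\in\mathbb{Z}$, so the reflection $\phi(x):=1-x$ intertwines the two maps:
$$\phi\bigl(T_{\beta,\alpha}(x)\bigr)=1-\bigl((\beta x+\alpha)\bmod 1\bigr)=\bigl(\beta(1-x)+\alpha^{*}\bigr)\bmod 1=T_{\beta,\alpha^{*}}\bigl(\phi(x)\bigr)$$
for every $x\in[0,1]$ with $\beta x+\alpha\notin\mathbb{Z}$. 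By the uniqueness part of Theorem~\ref{thm:T-ergodic-density}, this forces $\nu_{\beta,\alpha^{*}}=\phi_{*}\nu_{\beta,\alpha}$, and hence $M_{\beta}(\alpha^{*})=\int_{0}^{1}(1-y)\,d\nu_{\beta,\alpha}(y)=1-M_{\beta}(\alpha)$.

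For continuity I would use Parry's explicit density (\ref{eq:density-normal}). Setting $a_{n}(\alpha):=T_{\beta,\alpha}^{n}(0)$ and $b_{n}(\alpha):=T_{\beta,\alpha}^{n}(1)$ and integrating $\tilde g_{\beta,\alpha}$ term by term gives $M_{\beta}(\alpha)=N(\alpha)/D(\alpha)$ with
$$D(\alpha)=\sum_{n\ge 0}\beta^{-n}\bigl(b_{n}(\alpha)-a_{n}(\alpha)\bigr),\qquad N(\alpha)=\tfrac12\sum_{n\ge 0}\beta^{-n}\bigl(b_{n}(\alpha)^{2}-a_{n}(\alpha)^{2}\bigr).$$
Both series converge normally at geometric rate, and for each fixed $n$ the functions $a_{n}(\cdot),b_{n}(\cdot)$ are piecewise affine in $\alpha$ with finitely many jump discontinuities, occurring at those $\alpha$ for which some $a_{k}(\alpha)$ or $b_{k}(\alpha)$ with $k\le n-1$ equals a break point $(i-\alpha)/\beta$ of $T_{\beta,\alpha}$. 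Let $\Delta\subset[0,1)$ be the countable union of all these exceptional parameters; off $\Delta$ both $N$ and $D$ are continuous and so is $M_{\beta}$.

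The heart of the proof is to show cancellation of the jumps of $N$ and $D$ at each $\alpha_{0}\in\Delta$. In the model case, assume $a_{k}(\alpha_{0})=(i-\alpha_{0})/\beta$ for the minimal $k$ and that every other orbit is continuous at $\alpha_{0}$. Under the half-open branch convention of (\ref{eq:def-T-leq-1})--(\ref{eq:def-T-geq-1}) one has $a_{k+1}(\alpha_{0})=0$, and a short calculation yields, for every $j\ge 0$,
$$\lim_{\alpha\to\alpha_{0}^{+}}a_{k+1+j}(\alpha)=a_{j}(\alpha_{0}),\qquad \lim_{\alpha\to\alpha_{0}^{-}}a_{k+1+j}(\alpha)=b_{j}(\alpha_{0}),$$
while the whole orbit $(b_{n})$ and $a_{0},\dots,a_{k}$ remain continuous at $\alpha_{0}$. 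Substituting into the series and recognising the shifted block $\sum_{j\ge 0}\beta^{-(k+1+j)}(b_{j}(\alpha_{0})^{2}-a_{j}(\alpha_{0})^{2})=2\beta^{-(k+1)}N(\alpha_{0}^{+})$ (and similarly for $D$) yields the \emph{coherent jump identities}
$$N(\alpha_{0}^{+})-N(\alpha_{0}^{-})=\beta^{-(k+1)}N(\alpha_{0}^{+}),\qquad D(\alpha_{0}^{+})-D(\alpha_{0}^{-})=\beta^{-(k+1)}D(\alpha_{0}^{+}).$$
Consequently $N(\alpha_{0}^{\pm})$ and $D(\alpha_{0}^{\pm})$ are related by the \emph{same} nonzero scalar $1-\beta^{-(k+1)}$, which cancels in the ratio $N/D$, giving $M_{\beta}(\alpha_{0}^{-})=M_{\beta}(\alpha_{0})=M_{\beta}(\alpha_{0}^{+})$.

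The main obstacle is extending this coherent-jump analysis to degenerate parameters: (i)~$\alpha_{0}=1-\fr{\beta}$, where the very number of branches of $T_{\beta,\alpha}$ changes between (\ref{eq:def-T-leq-1}) and (\ref{eq:def-T-geq-1}); (ii)~parameters at which both boundary orbits meet break points simultaneously; and (iii)~cascading parameters, at which the primary jump of $a_{k+1}$ triggers a secondary jump of $a_{k+1+\ell}$ because the post-jump orbit itself hits another break. In each such case the calculation must be iterated branch by branch, producing a product of correction factors; however, the essential structural feature---each correction factor multiplies $N$ and $D$ by the \emph{same} scalar---persists, so the cancellation in the ratio continues to hold, and together with the uniform geometric tail estimate this delivers continuity of $M_{\beta}$ on the whole of $[0,1)$.
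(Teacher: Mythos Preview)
Your treatment of the functional equation via the reflection $\phi(x)=1-x$ is correct and is exactly the paper's argument (Lemma~\ref{le:isomorphic-alpha-alpha}).

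For continuity you take a genuinely different route from the paper. The paper never analyses jumps of $N$ and $D$ at all. Instead it proves directly that $\alpha\mapsto g_{\beta,\alpha}$ is right-continuous in $L^{1}(\lambda)$ by showing (Claim~\ref{cl:N_k-N}) that whenever $\alpha_{k}\searrow\alpha$ the first $N_{k}$ digits of the $T_{\beta,\alpha_{k}}$-expansions of $0$ and $1$ agree with those at $\alpha$, with $N_{k}\to\infty$; this forces $\|\tilde g_{\beta,\alpha_{k}}-\tilde g_{\beta,\alpha}\|_{1}\to 0$. Left continuity cannot be obtained the same way because the floor map is only right-continuous, so the paper introduces the auxiliary left-continuous map $\tilde T_{\beta,\alpha}(x)=\beta x+\alpha-\lceil\beta x+\alpha\rceil+1$, observes it agrees with $T_{\beta,\alpha}$ off a finite set (hence has the same invariant density a.e.), and repeats the argument. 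This bypasses all of your cases (i)--(iii) at once.

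Your jump-cancellation computation is clean and correct in the model case, but the extension to the degenerate cases has a real gap rather than a routine iteration. In case~(iii), once $a_{k+1+m}(\alpha)\to b_{m}(\alpha_{0})$ and $b_{m}(\alpha_{0})$ is itself a break point, you must know \emph{from which side} $a_{k+1+m}(\alpha)$ approaches it in order to decide whether $a_{k+2+m}(\alpha_{0}^{-})$ equals $0$ or $1$; this depends on comparing the slope $(\beta^{k+1+m}-1)/(\beta-1)$ of $a_{k+1+m}(\cdot)$ against the slope of $b_{m}(\cdot)$ and of the break curve $(i-\alpha)/\beta$, and is not the same mechanism as the primary jump. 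Worse, if both boundary orbits are eventually periodic through breaks at $\alpha_{0}$, you face infinitely many nested corrections whose interaction is two-sided (the $a$-orbit following the $b$-orbit, then the $b$-orbit following the $a$-orbit, and so on), and the assertion that this still reduces to ``the same scalar on $N$ and $D$'' is not established. The paper's ceiling-map trick is what buys you a uniform argument that avoids this combinatorial explosion.
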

\begin{remark}
An adaption of the proof of Theorem \ref{thm:main-1-continuous} can show that the map $\beta\mapsto M_{\beta}(\alpha)$ is also continuous on $(\sqrt{2}, \infty)$ for any $\alpha\in[0,1)$.
\end{remark}
In the following we fix $\beta>\sqrt{2}$, and set
\begin{equation*}\label{eq:def-set-M}
\mathcal{M}_\beta:=\{M_\beta(\alpha):\alpha\in[0,1)\}.
\end{equation*}
Note by Theorem \ref{thm:main-1-continuous} that $M_\beta(\alpha)+M_\beta(1-\fr{\beta+\alpha})=1$ for any $\alpha\in[0,1)$. Then
\[\left\{
\begin{array}{cc}
   M_\beta(\alpha)+M_\beta(1-\fr{\beta}-\alpha)=1&\textrm{if}\quad \alpha\in[0,1-\fr{\beta}); \\
   M_\beta(\alpha)+M_\beta(2-\fr{\beta}-\alpha)=1&\textrm{if}\quad \alpha\in[1-\fr{\beta}, 1).
\end{array}\right.
\]
This, together with the continuity property of $M_\beta$ in Theorem \ref{thm:main-1-continuous}, implies that  $\mathcal{M}_\beta$ is a closed interval in $[0,1)$ symmetric about the line $y=1/2$  (see Figure \ref{f:no-mono}  for an example).

Note that $M_{\beta}(\alpha)$ depends on the density function $g_{\beta,\alpha}$ which is in general  an infinite series (see Theorem \ref{thm:T-ergodic-density}). This leads to difficulties in studying extreme values of $\mathcal{M}_\beta$.
  \begin{figure}[h!]
  \centering
   \includegraphics[width=6.5cm]{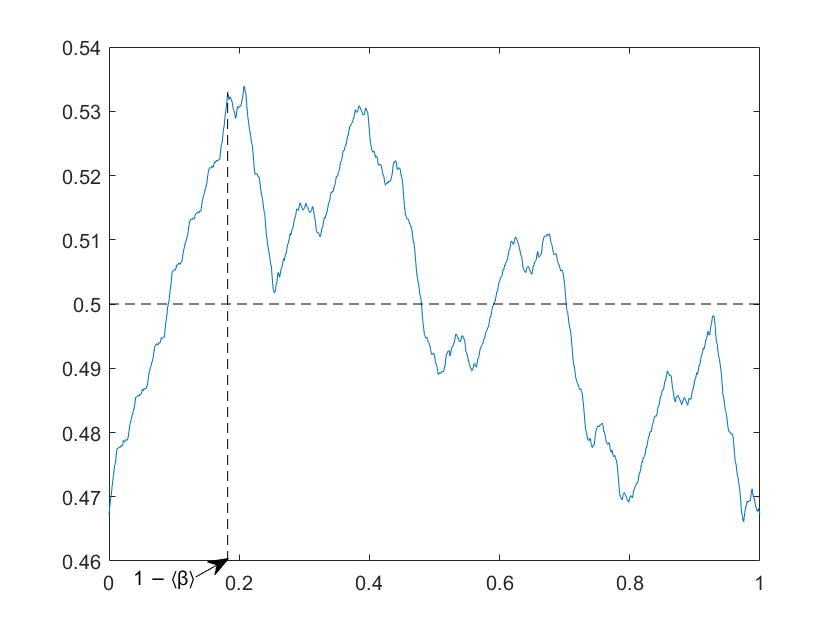}\quad \includegraphics[width=6.5cm]{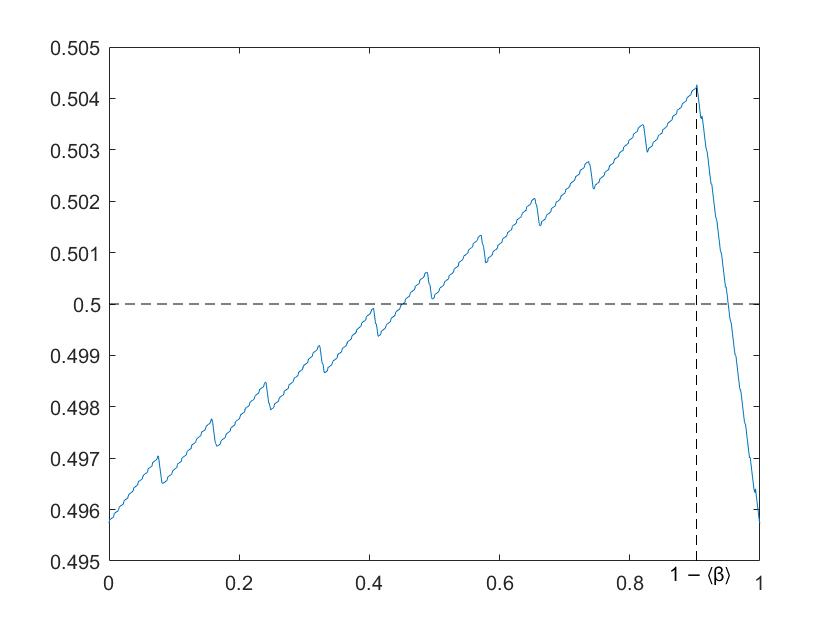}\\
  \caption{The  graphs of $M_{\beta}(\alpha)$ for $\alpha\in[0,1)$ with $\beta\approx 2.8177$ (left) and $\beta\approx 11.0976 $ (right). }\label{f:no-mono}
\end{figure}
However, $g_{\beta,\alpha}$ becomes a finite sum if the orbits of $0$ and $1$ meet after a finite number of iterations, this phenomena is called \emph{matching}.
Especially in the case of $\alpha$-continued fraction maps, the concept of matching has been proven to be very useful (cf.~\cite{Carminati-Marmi-Profeti-Tiozz-2010,Carminati-Tiozzo-2012,Kalle-Langeveld-Maggioni-Munday-2020,Nakada-Natsui-2008}).
For piecewise linear transformations, prevalent matching appears to be rare: for instance it can occur for $T_{\beta,\alpha}$ only if the slope $\beta$ is an algebraic integer. In \cite{Bruin-Carminati-Kalle-2017}  it is shown that matching occurs for all quadratic Pisot slopes and a set of translation parameters whose complement has Hausdorff dimension smaller than $1$. Later Bruin and Keszthelyi extended this result in \cite{Bruin-Keszthelyi-2022}. Sun et al.~\cite{Sun-Li-Ding-2023}  gave some dense fibre results of matching for fixed $\beta\in(1,2)$ and $\alpha\in[0,1-\fr{\beta})$.
We point out that all of these  results for $T_{\beta,\alpha}$ focus on   $\beta\in(1,2)$ or $\alpha\in[0,1-\fr{\beta})$.

In the following we assume $\beta$ belongs to a special class of algebraic integers.      Given  $ q , m\in\N$, we define the \emph{$( q , m)$-multinacci number} $\beta_{ q ,m}\in( q , q +1)$ to be the real root of $$ x^m= q  x^{m-1}+ q  x^{m-2}+\ldots + q .$$
  Our second result shows that    matching occurs  for $T_{\beta_{ q ,m},\alpha}$ for typical $\alpha\in[0,1)$.
\begin{theorem}\label{thm:main-2-linear}
For all  $m, q \in\N$, the transformation $T_{\beta_{ q ,m},\alpha}$ has matching for Lebesgue almost every $\alpha\in[0,1)$.
\end{theorem}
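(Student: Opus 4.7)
The plan is to cover $[0,1)$, up to a Lebesgue null set, by open \emph{matching intervals} on each of which there is a common $n \in \mathbb{N}$ with $T^n_{\beta,\alpha}(0) = T^n_{\beta,\alpha}(1)$. Fix $\beta = \beta_{q,m}$ (the case $m=1$ is trivial since then $\beta = q$ is an integer and $T_{\beta,\alpha}(0) = \alpha = T_{\beta,\alpha}(1)$ for every $\alpha$), and set $\phi_n(\alpha) := T^n_{\beta,\alpha}(0)$ and $\psi_n(\alpha) := T^n_{\beta,\alpha}(1)$. An induction on $n$, using the local form $T_{\beta,\alpha}(x) = \beta x + \alpha - c$ with $c \in \mathbb{Z}$ locally constant in $\alpha$, shows that $\phi_n$ and $\psi_n$ are continuous and piecewise affine in $\alpha$ with common slope $\frac{\beta^n-1}{\beta-1}$ on any interval of constancy of the digit sequences $(c_i(0))_{i=1}^n$ and $(c_i(1))_{i=1}^n$. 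Consequently $\psi_n - \phi_n$ is locally constant, so any single coincidence $\psi_n(\alpha_0) = \phi_n(\alpha_0)$ away from a breakpoint extends automatically to a full open matching interval around $\alpha_0$ on which the orbits of $0$ and $1$ merge.

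To produce an initial matching interval I would use the defining identity $\beta^m = q(\beta^{m-1}+\cdots+1)$, equivalently $1 = \sum_{i=1}^m q\beta^{-i}$, which says the greedy digits of $1$ under $T_{\beta,0}$ are $(q,q,\ldots,q,0,0,\ldots)$. Hence $\psi_m(0) = 0 = \phi_m(0)$ and a maximal open matching interval $I_0 \ni 0$ with index $n = m$ is obtained from the previous step. The symmetry $M_\beta(\alpha) + M_\beta(1 - \fr{\beta+\alpha}) = 1$ of Theorem \ref{thm:main-1-continuous}, which stems from a conjugation swapping $T_{\beta,\alpha}$ with $T_{\beta,1-\fr{\beta+\alpha}}$, transports $I_0$ to a companion matching interval near the lazy parameter $1 - \fr{\beta}$. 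Letting $\mathcal{I}$ be the collection of all maximal open intervals $I\subset[0,1)$ on which $\phi_n \equiv \psi_n$ for some common $n$, and $V := \bigcup_{I \in \mathcal{I}} I$, the theorem reduces to proving $\lambda([0,1) \setminus V) = 0$.

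The heart of the proof, and its main obstacle, is this measure-zero statement. My intended approach exploits the Pisot property of $\beta_{q,m}$: its minimal polynomial $x^m - q(x^{m-1}+\cdots+1)$ has exactly one real root outside the unit disk, so the remaining Galois conjugates lie strictly inside it. On each piece where the itineraries of $0$ and $1$ up to time $n$ are fixed, the constant value of $\psi_n - \phi_n$ lies in $\mathbb{Z}[\beta]$ with coefficients bounded by an $n$-independent multiple of $q$; the Pisot estimate then forces this constant to be either exactly zero (giving matching on that piece) or bounded below by an explicit $\delta > 0$. A density-at-a-point argument — at every $\alpha_0 \notin V$, matching intervals of comparable size should accumulate on both sides of $\alpha_0$ — would then yield Lebesgue density strictly less than $1$ in $V^c$ at each of its points, and hence $\lambda(V^c) = 0$. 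An alternative and perhaps cleaner route is to build a renormalization (Gauss-type) map on the parameter interval in the spirit of Nakada--Natsui \cite{Nakada-Natsui-2008}, under which $V^c$ is forward invariant with respect to an expanding induced map admitting no absolutely continuous invariant measure. The most technical point in either approach will be a uniform combinatorial analysis of how the itineraries of $0$ and $1$ vary with $\alpha$ and can be made to coincide via the specific algebraic relations of $\beta_{q,m}$.
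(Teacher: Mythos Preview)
Your setup is correct and aligns with the paper's: the difference $\psi_n - \phi_n$ is locally constant in $\alpha$, and your Pisot observation that it takes only finitely many values in $\mathbb{Z}[\beta]$ (bounded away from zero when nonzero) is equivalent to the paper's Lemma~\ref{le:t-1-t-0-finite}, which explicitly describes the finite state space $\bigl\{\pm\sum_{j=1}^m e_j\beta^{-j}: e_j\in\{0,q\}\bigr\}\setminus\{\pm 1\}$ of the orbit differences. You also correctly locate an initial matching interval at $\alpha = 0$; the paper in fact shows (Lemma~\ref{le:matching-m-th}) that the whole of $[0, 1-\langle\beta\rangle)$ is a single matching interval with matching time exactly $m$.

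However, the crucial step---showing $\lambda(V^c) = 0$---is only sketched, and neither of your two proposed routes is carried out or clearly viable. The Lebesgue density argument would require that matching intervals of comparable size accumulate at every $\alpha_0 \in V^c$, but the gap $|\psi_n - \phi_n| \geq \delta$ is a statement in \emph{value} space, not in parameter space, and does not by itself produce any positive density of matching parameters near $\alpha_0$. The renormalization idea is even less specified. The paper takes an entirely different and concrete route: it introduces the fixed point $P = (q-\alpha)/(\beta-1)$ of $T_{\beta,\alpha}$ and proves (Proposition~\ref{cl:matching-P}), by a detailed case analysis on the finite state space of the $\delta^k$, that once $T^k_{\beta,\alpha}(0) = P$ matching follows within a bounded number of further steps (except for at most countably many $\alpha$ when $q=1$). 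It then invokes the result of Bruin--Keszthelyi~\cite{Bruin-Keszthelyi-2022} that for Lebesgue a.e.\ $\alpha$ the orbit $\{T^k_{\beta,\alpha}(0)\}$ is dense in $[0,1)$, hence enters an interval near $P$ on which the same sign pattern of the $\delta^i$ persists and matching is triggered. Your proposal is missing a mechanism of this kind that actually forces matching on a full-measure set of parameters.
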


Given $\beta > \sqrt{2}$, a subinterval $I \subset (0,1]$ is called a \emph{matching interval} if there exists $k \in \mathbb{N}$ such that for every $\alpha \in I$,
\[
T_{\beta,\alpha}^i(0) \neq T_{\beta,\alpha}^i(1) \quad \forall\, 1 \leq i \leq k \quad \text{and} \quad T_{\beta,\alpha}^{k+1}(0) = T_{\beta,\alpha}^{k+1}(1),
\]
where $I$ is maximal with this property. By \eqref{eq:density-normal}, the density function $g_{\beta,\alpha}$ is then a sum of at most $k+1$ terms for all $\alpha \in I$.

We establish that $M_{\beta}$ is linear on each matching interval. Theorem \ref{thm:main-2-linear} implies that matching intervals cover $[0,1)$ up to a Lebesgue null set when $\beta = \beta_{q,m}$. Consequently, $M_{\beta_{q,m}}(\cdot)$ is linear for almost every $\alpha \in [0,1)$. Furthermore, we characterize the monotonicity of $M_{\beta_{q,m}}$ on each matching interval (See details in Section \ref{sec:matching-2}).
In particular, $[0, 1-\fr{\beta_{ q ,m}})$ is a matching interval where $M_{\beta_{q,m}}$ is strictly increasing (see Section \ref{sec:matching-1}). For fixed $q, m \in \mathbb{N}$, write
\[
M_{\beta_{q,m}}(\alpha) := k_{q,m} \alpha + b_{q,m} \quad \text{for} \quad \alpha \in [0, 1 -\fr{\beta_{ q ,m}}).
\]
Our final result states that for $q \in \mathbb{N}$ and any $m \geq 2$,
\[
k_{q,m} < k_{q,m+1} \quad \text{and} \quad b_{q,m} < b_{q,m+1}.
\]
 This implies that $T_{\beta_{q,2},\alpha}$ achieves superior approximation efficiency for $\alpha \in [0, 1  -\fr{\beta_{ q ,m}})$ among $m\geq 2$ (See Figure \ref{Fig:0-1}).
 \begin{theorem}\label{thm:main-3-increasing}
Given  $ q \geq 1$,  both sequences $\{k_{ q ,m}\}_{m\geq 1}$ and $\{b_{ q ,m}\}_{m\geq 1}$ are strictly increasing.
\end{theorem}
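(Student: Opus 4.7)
The plan is to first establish matching at step $m$ for $T_{\beta_{q,m},\alpha}$ on $[0,1-\langle\beta_{q,m}\rangle)$, then use the resulting finite-sum density to produce closed forms for $b_{q,m}$ and $k_{q,m}$, and finally verify the strict monotonicity in $m$ by algebraic manipulation.

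\textbf{Step 1 (Orbits and matching).} Write $\beta:=\beta_{q,m}$ and $\tau:=q+1-\beta$, so that the defining relation $\beta^{m}=q(\beta^{m-1}+\cdots+1)$ rearranges to $\beta^{m}\tau=q$, equivalently $(\beta-1)\beta^{m}=q(\beta^{m}-1)$. For each $\alpha\in[0,\tau)$, I proceed by induction on $n$ to show that for $0\leq n\leq m-1$,
\[
T_{\beta,\alpha}^{n}(0)=\alpha\,\frac{\beta^{n}-1}{\beta-1},\qquad
T_{\beta,\alpha}^{n}(1)=T_{\beta,\alpha}^{n}(0)+\frac{\beta^{m}-\beta^{n}}{\beta^{m}-1},
\]
with successive digits $c_{n+1}(T^{n}(0))=0$ and $c_{n+1}(T^{n}(1))=q$. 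The bound $\alpha<\tau$ is exactly what is needed to place the two iterates in the correct partition elements at each step, and applying the $(m-1)$-th step together with $\beta^{m}-\beta^{m}=0$ yields the matching $T_{\beta,\alpha}^{m}(0)=T_{\beta,\alpha}^{m}(1)$.

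\textbf{Step 2 (Closed form for $b_{q,m}$ and $k_{q,m}$).} Set $u_{n}:=T_{\beta,\alpha}^{n}(0)$, $v_{n}:=T_{\beta,\alpha}^{n}(1)$, and $\delta_{n}:=v_{n}-u_{n}=(\beta^{m}-\beta^{n})/(\beta^{m}-1)$. Combining Theorem \ref{thm:T-ergodic-density} with the matching of Step~1 collapses the density to a finite sum
\[
\tilde g_{\beta,\alpha}(x)=\sum_{n=0}^{m-1}\beta^{-n}\,\mathbf{1}_{[u_{n},v_{n})}(x).
\]
Straightforward integration, using $\beta^{m}=q/\tau$ to collapse the resulting geometric sums, shows that $\int\tilde g\,d\lambda$ is independent of $\alpha$ while $\int x\,\tilde g\,d\lambda$ is linear in $\alpha$. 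Dividing yields
\[
b_{q,m}=\frac{q\beta+\tau(1-2qm)}{2(\beta-1)\bigl[(m+1)\beta-m(q+1)\bigr]},\qquad
k_{q,m}=\frac{m(2q+1-\beta)-(\beta+1)}{(\beta-1)\bigl[(m+1)\beta-m(q+1)\bigr]},
\]
with the consistency check $1-2b_{q,m}=\tau\,k_{q,m}$ matching the symmetry of Theorem \ref{thm:main-1-continuous}.

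\textbf{Step 3 (Strict monotonicity).} Substituting $s:=\beta^{m}=q/\tau$, so $\beta=q+1-q/s$, the formulas become
\[
b_{q,m}=\frac{s\bigl[(q+1)s-q(2m+1)+1\bigr]}{2(s-1)\bigl[(q+1)s-q(m+1)\bigr]},
\]
and similarly for $k_{q,m}$, with the multinacci constraint $s=(q+1-q/s)^{m}$, equivalently $\beta_{q,m}^{m+1}=(q+1)\beta_{q,m}^{m}-q$. Since $\beta_{q,m}<\beta_{q,m+1}$, one has $s_{m}<s_{m+1}$. To prove $b_{q,m+1}>b_{q,m}$, I would clear denominators in the difference and reduce the resulting polynomial in $\beta_{q,m}$ and $\beta_{q,m+1}$ using both $\beta_{q,m}^{m+1}=(q+1)\beta_{q,m}^{m}-q$ and $\beta_{q,m+1}^{m+2}=(q+1)\beta_{q,m+1}^{m+1}-q$, together with the recursion $p_{m+1}(\beta)=\beta p_{m}(\beta)-q$ linking the multinacci polynomials $p_{m}(\beta)=\beta^{m}-q\sum_{j=0}^{m-1}\beta^{j}$, until the sign is transparent. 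The parallel argument for $k_{q,m+1}>k_{q,m}$ can be carried out the same way, or bypassed via $k_{q,m}=(1-2b_{q,m})/\tau_{m}$ combined with the rapid decay $\tau_{m+1}/\tau_{m}=\beta_{q,m}^{m}/\beta_{q,m+1}^{m+1}$.

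\textbf{Main obstacle.} Step 3 is where the real work lies. Treating $b_{q,m}$ as a function of $(s,m)$ with $s$ and $m$ formally independent, one computes
\[
\frac{\partial b}{\partial m}=-\frac{q(q+1)s}{2\bigl[(q+1)s-q(m+1)\bigr]^{2}}<0,
\]
so $b$ is \emph{decreasing} in $m$ at fixed $s$. The positive net effect must therefore come entirely from the exponential growth $s_{m+1}\gg s_{m}$ outweighing this negative partial, and a generic partial-derivative argument will not suffice. The proof has to delicately combine both effects through the multinacci constraint that links $s$ and $m$, most likely via explicit polynomial reduction using the two defining identities simultaneously.
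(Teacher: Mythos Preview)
Your Steps 1 and 2 are correct and essentially reproduce the paper's setup: matching at time $m$ on $[0,1-\langle\beta_{q,m}\rangle)$ (the paper's Lemma~\ref{le:matching-m-th}), the finite-sum density, and closed forms for $k_{q,m}$ and $b_{q,m}$ that agree with the paper's \eqref{eq:k-n-m} and \eqref{eq:d-n-m} after the substitution $q/\beta^{m}=\tau$. Your consistency check $1-2b_{q,m}=\tau\,k_{q,m}$ is exactly the central symmetry of Lemma~\ref{le:isomorphic-alpha-alpha}.

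The genuine gap is Step 3. What you have written there is a description of what you \emph{would} do, not a proof: ``clear denominators \ldots\ until the sign is transparent'' and ``can be carried out the same way'' are not arguments. You have correctly diagnosed why this is hard---at fixed $s$ the quantity $b$ is decreasing in $m$, so the whole effect has to come from quantitative control of how fast $s_{m}\to s_{m+1}$---but you never supply that control. The paper supplies it via a separate lemma giving the two-sided estimate
\[
\frac{q(1-\beta_{q,m}^{-1})}{\beta_{q,m+1}^{m}}<\beta_{q,m+1}-\beta_{q,m}<\frac{q}{\beta_{q,m+1}^{m}},
\]
and then feeds these bounds into the difference $k_{q,m+1}-k_{q,m}$ (resp.\ $d_{q,m+1}-d_{q,m}$), reducing after substantial algebra to an inequality of the shape $q^{2}-(m^{2}+m+1)q^{2}\beta_{q,m}^{-m}>0$, which still fails for a finite list of small $(q,m)$ that must be checked by hand. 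Your proposed shortcut for $k$ via $k_{q,m}=(1-2b_{q,m})/\tau_{m}$ does not obviously help either: once $b_{q,m+1}>b_{q,m}$ is known, the numerator $1-2b$ decreases while the denominator $\tau$ also decreases, and deciding which wins is precisely the same quantitative question again. In short, the missing idea is a sharp enough estimate on $\beta_{q,m+1}-\beta_{q,m}$ (equivalently on $s_{m+1}/s_{m}$), and without it Step 3 does not go through.
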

\begin{figure}[h!]
  \centering
  \includegraphics[width=8cm]{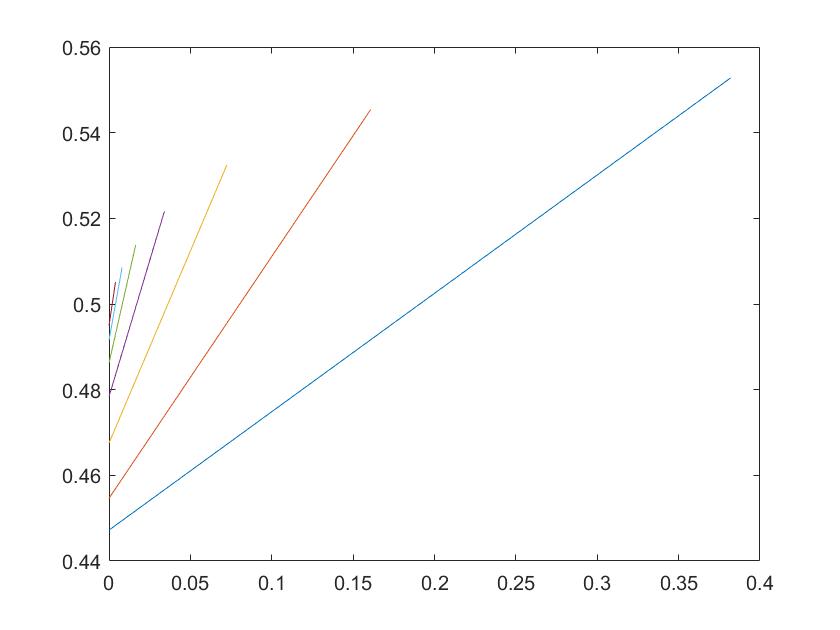}\\
  \caption{The graph of $M_{\beta_{1,m}}(\alpha)=k_{1,m}\alpha+b_{1,m}$ over $\alpha\in[0,1-\fr{\beta_{1,m}})$ for $m=2,\ldots  8$ (from bottom to top).}\label{Fig:0-1}
\end{figure}
 
The rest of the paper is organized as follows. In the next section we study the continuity properties of $M_{\beta}$, and then prove Theorem \ref{thm:main-1-continuous}. The matching results in Theorem \ref{thm:main-2-linear} are  split into two parts, $\alpha\in[0,1-\fr{\beta_{ q ,m}})$ and $\alpha\in[1-\fr{\beta_{ q ,m}},1)$. In Section \ref{sec:matching-1} we show that $M_{\beta_{ q ,m}}$ is linear  and increasing on $[0,1-\fr{\beta_{ q ,m}})$ and prove  Theorem \ref{thm:main-3-increasing}. The matching results for $\alpha\in[1-\fr{\beta_{ q ,m}},1)$ are proven in Section \ref{sec:matching-2}. We give some further questions in Section \ref{sec:question}.

\section{The continuity of $M_{\beta}$}\label{sec:cont}
This section investigates the continuity of the function $M_{\beta}$, culminating in the proof of Theorem \ref{thm:main-1-continuous}. Recall from \eqref{eq:def-M} that 
\[
M_\beta(\alpha) = \int_{[0,1]} x g_{\beta,\alpha}(x)  \mathrm{d}\lambda(x),
\]
where $g_{\beta,\alpha}$ is the density of the invariant probability measure $\nu_{\beta,\alpha}$ for $T_{\beta,\alpha}$ (see Theorem \ref{thm:T-ergodic-density}). So, to prove the continuity of $M_\beta$ it suffices to prove the continuity of the map $\al\to g_{\beta,\al}$ in $L^1(\la)$.  

We first establish right continuity of this mapping.

\begin{lemma}\label{lem:right-continuity-M}
Let $\beta > 1$ and $\alpha \in [0,1)$. For any sequence $(\alpha_k)_{k \geq 1} \subset (\alpha, 1)$ satisfying $\lim_{k \to \infty} \alpha_k = \alpha$, then 
$\lim_{k \to \infty} \| g_{\beta,\alpha_k} - g_{\beta,\alpha} \|_1 = 0.$
\end{lemma}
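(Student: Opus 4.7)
The plan is to reduce the $L^1$-convergence of $g_{\beta,\alpha_k}$ to a right-continuity statement about the orbits $T^n_{\beta,\alpha}(0)$ and $T^n_{\beta,\alpha}(1)$ as functions of $\alpha$, and then invoke dominated convergence on Parry's series \eqref{eq:density-normal}. Rewriting
\[
\tilde g_{\beta,\alpha}(x)=\sum_{n=0}^{\infty}\beta^{-n}\bigl(\mathbf{1}_{[0,T^n_{\beta,\alpha}(1))}(x)-\mathbf{1}_{[0,T^n_{\beta,\alpha}(0))}(x)\bigr),
\]
using the identity $\|\mathbf{1}_{[0,a)}-\mathbf{1}_{[0,b)}\|_{1}=|a-b|$ and the triangle inequality yields
\[
\|\tilde g_{\beta,\alpha_k}-\tilde g_{\beta,\alpha}\|_1\leq \sum_{n=0}^{\infty}\beta^{-n}\bigl(|T^n_{\beta,\alpha_k}(1)-T^n_{\beta,\alpha}(1)|+|T^n_{\beta,\alpha_k}(0)-T^n_{\beta,\alpha}(0)|\bigr).
\]
Since each summand is dominated by $2\beta^{-n}$ and $\sum \beta^{-n}<\infty$, dominated convergence for series reduces the lemma to the claim that $\alpha\mapsto T^n_{\beta,\alpha}(y)$ is right continuous at $\alpha$ for every $n\geq 0$ and $y\in\{0,1\}$.

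I would prove this claim by induction on $n$, setting $\phi_n(\alpha):=T^n_{\beta,\alpha}(y)$. The structural fact I would exploit is that on each maximal subinterval of $[0,1)$ where the first $n$ digits of $y$ under $T_{\beta,\alpha}$ are constant, $\phi_n$ is affine in $\alpha$ with positive slope $s_n=1+\beta+\cdots+\beta^{n-1}$, and a parallel induction shows that $\phi_n$ has only finitely many such pieces on $[0,1)$. Consequently, for every $\alpha_0\in[0,1)$ there exists $\varepsilon>0$ such that $\phi_n$ is strictly increasing on $[\alpha_0,\alpha_0+\varepsilon)$, so in particular $\phi_n(\alpha)\geq\phi_n(\alpha_0)$ on this interval. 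For the induction step, write $\phi_{n+1}(\alpha)=\beta\phi_n(\alpha)+\alpha-\lfloor\beta\phi_n(\alpha)+\alpha\rfloor$. If $\beta\phi_n(\alpha_0)+\alpha_0\notin\mathbb Z$, the floor is locally constant and the inductive hypothesis immediately gives $\phi_{n+1}(\alpha)\to\phi_{n+1}(\alpha_0)$. If $\beta\phi_n(\alpha_0)+\alpha_0=j\in\mathbb Z$, so that $\phi_{n+1}(\alpha_0)=0$, the monotonicity of $\phi_n$ on $[\alpha_0,\alpha_0+\varepsilon)$ gives $\beta\phi_n(\alpha)+\alpha>j$ for $\alpha$ slightly above $\alpha_0$; hence $\lfloor\cdot\rfloor=j$ and $\phi_{n+1}(\alpha)=\beta\phi_n(\alpha)+\alpha-j\to 0=\phi_{n+1}(\alpha_0)$.

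Once the right-continuity of the orbits is in hand, the displayed inequality and dominated convergence give $\|\tilde g_{\beta,\alpha_k}-\tilde g_{\beta,\alpha}\|_1\to 0$. The same argument applied to $\int_{[0,1]}\tilde g_{\beta,\alpha_k}\,d\lambda=\sum_{n\geq 0}\beta^{-n}(T^n_{\beta,\alpha_k}(1)-T^n_{\beta,\alpha_k}(0))$ shows that the normalization constants converge to the strictly positive limit $\int_{[0,1]}\tilde g_{\beta,\alpha}\,d\lambda$, and dividing then yields $\|g_{\beta,\alpha_k}-g_{\beta,\alpha}\|_1\to 0$.

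The main obstacle is the integer case in the induction: one must verify that $\phi_n(\alpha)$ approaches $\phi_n(\alpha_0)$ from \emph{above} as $\alpha\downarrow\alpha_0$, so that the floor function takes value $j$ rather than $j-1$. This is precisely where the one-sided hypothesis $\alpha_k>\alpha$ is essential (the symmetric left-continuity statement would fail at the same $\alpha_0$, since $\phi_{n+1}$ would jump down from $1$ to $0$), and where the convention in \eqref{eq:def-T-leq-1}--\eqref{eq:def-T-geq-1} that each branch of $T_{\beta,\alpha}$ is closed on the left enters the argument.
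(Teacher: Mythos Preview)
Your argument is correct and is a cleaner, more conceptual route than the paper's. The paper works quantitatively: it introduces $N_k$, the largest index up to which the digit sequences of $0$ and $1$ under $T_{\beta,\alpha_k}$ and $T_{\beta,\alpha}$ agree, and proves (Claim~\ref{cl:N_k-N}) that $N_k\to\infty$ with the explicit two-sided bound $N\le N_k\le \log_\beta\bigl(1+\tfrac{\beta-1}{\alpha_k-\alpha}\bigr)$. It then splits the series for $\tilde g$ at $N_k$, handling the tail by the geometric bound $4/((\beta-1)\beta^{N_k})$ and the initial terms via the exact identity $T^i_{\beta,\alpha_k}(y)-T^i_{\beta,\alpha}(y)=\tfrac{\beta^i-1}{\beta-1}(\alpha_k-\alpha)$ valid for $i\le N_k$. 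This gives an explicit rate $\|\tilde g_{\beta,\alpha_k}-\tilde g_{\beta,\alpha}\|_1\lesssim (\alpha_k-\alpha)\log_\beta\tfrac{1}{\alpha_k-\alpha}$.

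Your approach trades this quantitative control for a soft dominated-convergence argument, reducing everything to the right-continuity of each $\phi_n(\alpha)=T^n_{\beta,\alpha}(y)$. The inductive step you sketch is exactly right; the one point worth stating precisely is the induction hypothesis: for every $\alpha_0$ there exists $\varepsilon>0$ such that $\phi_n$ is affine with slope $s_n$ on $[\alpha_0,\alpha_0+\varepsilon)$. This is what makes $\psi(\alpha):=\beta\phi_n(\alpha)+\alpha$ strictly increasing with slope $s_{n+1}>0$ on that one-sided neighbourhood, and hence forces $\lfloor\psi(\alpha)\rfloor=j$ (not $j-1$) for $\alpha$ just above $\alpha_0$ in the integer case. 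The global ``finitely many pieces'' statement is true but not strictly needed; the local one-sided affine structure suffices and is itself preserved by the induction. Your reduction from $g$ to $\tilde g$ via convergence of the normalising constants $K_{\beta,\alpha_k}=\sum_n\beta^{-n}(T^n_{\beta,\alpha_k}(1)-T^n_{\beta,\alpha_k}(0))$ is the same as the paper's, and the positivity of the limit $K_{\beta,\alpha}$ is guaranteed by Theorem~\ref{thm:T-ergodic-density}. What you lose relative to the paper is the explicit convergence rate; what you gain is a shorter proof that isolates exactly where the one-sided hypothesis $\alpha_k>\alpha$ and the left-closed branch convention are used.
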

\begin{proof}

 Since the proof for $\al\in[1-\fr{\beta}, 1)$  is similar, we only focus on $\alpha \in [0, 1-\fr{\beta})$. Fix $\alpha \in \big[0, 1-\fr{\beta})$, and take a sequence $(\alpha_k) \subset (\alpha, 1-\fr{\beta})$ converging to $\alpha$ as $k \to \infty$. By Theorem \ref{thm:T-ergodic-density}, we have 
\[
g_{\beta,\alpha_k}(x) = \frac{\tilde{g}_{\beta,\alpha_k}(x)}{K_{\beta,\alpha_k}} \quad \text{for} \quad x \in [0,1),
\]
where $K_{\beta,\alpha_k} := \int_{[0,1]} \tilde{g}_{\beta,\alpha_k}  \mathrm{d}\lambda \in (0,\infty)$. Then
\begin{align*}
\| g_{\beta,\alpha_k} - g_{\beta,\alpha} \|_1 
&= \int_{[0,1]} \left| \frac{\tilde{g}_{\beta,\alpha_k}}{K_{\beta,\alpha_k}} - \frac{\tilde{g}_{\beta,\alpha}}{K_{\beta,\alpha}} \right|  \mathrm{d}\lambda \\
&\leq \int_{[0,1]} \left| \frac{\tilde{g}_{\beta,\alpha_k}}{K_{\beta,\alpha_k}} - \frac{\tilde{g}_{\beta,\alpha_k}}{K_{\beta,\alpha}} \right|  \mathrm{d}\lambda + \int_{[0,1]} \left| \frac{\tilde{g}_{\beta,\alpha_k}}{K_{\beta,\alpha}} - \frac{\tilde{g}_{\beta,\alpha}}{K_{\beta,\alpha}} \right|  \mathrm{d}\lambda \\
&\leq \frac{\| \tilde{g}_{\beta,\alpha_k} \|_\infty}{K_{\beta,\alpha} K_{\beta,\alpha_k}} \| \tilde{g}_{\beta,\alpha_k} - \tilde{g}_{\beta,\alpha} \|_1 + \frac{1}{K_{\beta,\alpha}} \| \tilde{g}_{\beta,\alpha_k} - \tilde{g}_{\beta,\alpha} \|_1,
\end{align*}
where
$\| \tilde{g}_{\beta,\alpha_k} \|_\infty = \sup_{x \in [0,1]} | \tilde{g}_{\beta,\alpha_k}(x) |$. 
Thus, it suffices to prove
\begin{equation}\label{eq:hat-g-g-k}
\lim_{k \to \infty} \| \tilde{g}_{\beta,\alpha_k} - \tilde{g}_{\beta,\alpha} \|_1 = 0.
\end{equation}

Note that  $\alpha, \alpha_k\in[0,1-\fr{\beta})$ for all $k\in\N$.   Let $(a_i), (b_i)\in\{0,\ldots ,\lf\beta\rf\}^\N$ be the intermediate $\beta$-expansions with parameter $\alpha$ of $0$ and $1$, respectively. Then by (\ref{eq:def-beta-expansion}) we have
\[
0=\sum_{i=1}^\infty\frac{a_i-\alpha}{\beta^i}, \qquad 1=\sum_{i=1}^\infty\frac{b_i-\alpha}{\beta^i}.
\] Similarly, for $k\in\N$ let $(a_{k,i}), (b_{k,i})\in\{0,\ldots ,\lf\beta\rf\}^\N$ be the intermediate $\beta$-expansions with parameter $\alpha_k$ of $0$ and $1$, respectively. Then 
\[
0=\sum_{i=1}^\infty\frac{a_{k,i}-\alpha_k}{\beta^i},\qquad 1=\sum_{i=1}^\infty\frac{b_{k,i}-\alpha_k}{\beta^i}.
\]
 For $k\in\N$ set \begin{equation}\label{eq:def-N_k}
 N_k:=\max\{i\in\mathbb{N}:a_{k,1}\ldots  a_{k,i}=a_1\ldots  a_i\quad\text{and}\quad b_{k,1}\ldots  b_{k,i}= b_1\ldots  b_i\}.
 \end{equation}
Note that $N_k\ge 1$ since $a_{k,1}=0 =a_1$ and $b_{k,1}=\lf\beta\rf= b_1$ for all $\alpha_k\in(\alpha,1-\fr{\beta})$. Observe that $\al_k>\alpha$ and $\lim_{k\to\f}\alpha_k=\alpha$. Then we establish the following claim.
\begin{claim}\label{cl:N_k-N}
For every $N \in \mathbb{N}$, there exists $K \in \mathbb{N}$ such that for all $k \geq K$, we have 
\begin{equation}\label{eq:feb4-1}
\begin{split}
0 < \alpha_k - \alpha & < \frac{\beta - 1}{\beta^{N+1} - 1} \cdot \min\left\{\frac{a_i + 1 - \alpha}{\beta} - T_{\beta,\alpha}^{i-1}(0) : 1 \leq i \leq N\right\}, \\
0 < \alpha_k - \alpha & < \frac{\beta - 1}{\beta^{N+1} - 1} \cdot \min\left\{\frac{b_i + 1 - \alpha}{\beta} - T_{\beta,\alpha}^{i-1}(1) : 1 \leq i \leq N\right\},
\end{split}
\end{equation}
and consequently
$ N \leq N_k \leq \log_{\beta}\left(1 + \frac{\beta - 1}{\alpha_k - \alpha}\right).$
\end{claim}

By \eqref{eq:digit-T-beta-alpha}, we have $a_i = \lfloor \beta T_{\beta,\alpha}^{i-1}(0) + \alpha \rfloor > \beta T_{\beta,\alpha}^{i-1}(0) + \alpha - 1$ and $b_i > \beta T_{\beta,\alpha}^{i-1}(1) + \alpha - 1$ for all $ i \geq 1$. This ensures the positivity of the minima of the right terms in \eqref{eq:feb4-1}. Since $\alpha_k > \alpha$ and $\lim_{k \to \infty} \alpha_k = \alpha$, the inequalities in \eqref{eq:feb4-1} hold for all sufficiently large $k$. 

We now establish bounds for $N_k$. First, consider the upper bound. From \eqref{eq:def-beta-expansion}, for any $x \in [0,1]$ we have
\begin{equation}\label{eq:T-beta-alpha}
T_{\beta,\alpha}^n(x) = \beta^n \left( x - \sum_{i=1}^{n} \frac{c_i - \alpha}{\beta^i} \right) \quad \forall~ n \geq 1,
\end{equation}
where $(c_i)_{i=1}^\infty \in \{0,1,\ldots, \lfloor\beta\rfloor\}^\mathbb{N}$ is the intermediate $\beta$-expansion of $x$ with parameter $\alpha$. Combining \eqref{eq:def-N_k} and \eqref{eq:T-beta-alpha} yields
\[
T_{\beta,\alpha_k}^{N_k}(0) - T_{\beta,\alpha}^{N_k}(0) = \beta^{N_k} \sum_{i=1}^{N_k} \frac{\alpha_k - \alpha}{\beta^i} = \frac{\beta^{N_k} - 1}{\beta - 1}(\alpha_k - \alpha).
\]
As $T_{\beta,\alpha_k}^{N_k}(0), T_{\beta,\alpha}^{N_k}(0) \in [0,1]$, it follows that
$
\frac{\beta^{N_k} - 1}{\beta - 1}(\alpha_k - \alpha) \leq 1,
$
implying $$N_k \leq \log_\beta\left(1 + \frac{\beta - 1}{\alpha_k - \alpha}\right).$$

To establish the lower bound of $N_k$, we proceed by induction on $N$. 
The base case $N=1$ holds trivially since $N_k \geq 1$ for all $\alpha_k \in (\alpha,1-\fr{\beta})$. 

Now assume $N \geq 2$, and suppose that $$a_{k,1} \ldots a_{k,n} = a_1 \ldots a_n,\quad b_{k,1} \ldots b_{k,n} = b_1 \ldots b_n\quad \text{for some }n \leq N-1.$$ We will show that $a_{k,n+1} = a_{n+1}$ and $b_{k,n+1} = b_{n+1}$ under assumption \eqref{eq:feb4-1}.
By \eqref{eq:T-beta-alpha} and the condition $\alpha_k > \alpha$, we derive
\begin{align*}
T_{\beta,\alpha_k}^n(0) 
= \beta^n \sum_{i=1}^{n} \frac{\alpha_k - a_i}{\beta^i} 
> \beta^n \sum_{i=1}^{n} \frac{\alpha - a_i}{\beta^i} = T_{\beta,\alpha}^n(0) 
\geq \frac{a_{n+1} - \alpha}{\beta} > \frac{a_{n+1} - \alpha_k}{\beta},
\end{align*}
where the second inequality follows from $a_{n+1} = \lfloor \beta T_{\beta,\alpha}^n(0) + \alpha \rfloor \leq \beta T_{\beta,\alpha}^n(0) + \alpha$. This implies 
$
a_{k,n+1} = \left\lfloor \beta T_{\beta,\alpha_k}^n(0) + \alpha_k \right\rfloor \geq a_{n+1}.
$ 

Conversely, using \eqref{eq:T-beta-alpha} we obtain
\begin{align*}
  T_{\beta,\alpha_k}^n(0) & =\beta^n\sum_{i=1}^{n}\frac{\al_k-a_i}{\beta^i}\\
  &=T_{\beta,\alpha}^n(0)+\beta^n\sum_{i=1}^{n}\frac{\al_k-\al}{\beta^i}=T_{\beta,\alpha}^n(0)+\frac{\beta^n-1}{\beta-1}(\al_k-\al)\\
  &<\frac{a_{n+1}+1-\alpha}{\beta}-\frac{\beta^{N+1}-1}{\beta-1}(\al_k-\al)+\frac{\beta^n-1}{\beta-1}(\al_k-\al)\\
  &<\frac{a_{n+1}+1-\alpha}{\beta}-\frac{\al_k-\al}{\beta}=\frac{a_{n+1}+1-\al_k}{\beta}.
\end{align*}
The first inequality relies on \eqref{eq:feb4-1} which gives
\[
\alpha_k - \alpha < \frac{\beta - 1}{\beta^{N+1} - 1} \left( \frac{a_{n+1} + 1 - \alpha}{\beta} - T_{\beta,\alpha}^n(0) \right).
\]
Consequently, $a_{k,n+1} = \lfloor \beta T_{\beta,\alpha_k}^n(0) + \alpha_k \rfloor < a_{n+1} + 1$, implying that $a_{k,n+1} = a_{n+1}$. 

By identical way applied to $b_{k,1} \ldots b_{k,n} = b_1 \ldots b_n$ and \eqref{eq:feb4-1},
 we obtain $b_{k,n+1} = b_{n+1}$. The induction thus establishes $N_k \geq N$, completing the proof of Claim \ref{cl:N_k-N}.

By \eqref{eq:density-normal}, we have that 
\begin{equation}\label{eq:feb4-2}
\begin{split}
\|\tilde{g}_{\beta,\alpha_k} - \tilde{g}_{\beta,\alpha}\|_1 
&\leq \int_{[0,1]} \left| \sum_{i=0}^{\infty} \frac{\mathds{1}_{[0, T_{\beta,\alpha_k}^i(1))}}{\beta^i} - \sum_{i=0}^{\infty} \frac{\mathds{1}_{[0, T_{\beta,\alpha}^i(1))}}{\beta^i} \right| \mathrm{d}\lambda \\
&\quad + \int_{[0,1]} \left| \sum_{i=0}^{\infty} \frac{\mathds{1}_{[0, T_{\beta,\alpha_k}^i(0))}}{\beta^i} - \sum_{i=0}^{\infty} \frac{\mathds{1}_{[0, T_{\beta,\alpha}^i(0))}}{\beta^i} \right| \mathrm{d}\lambda \\
&\leq \int_{[0,1]} \left| \sum_{i=0}^{N_k} \frac{\mathds{1}_{[0, T_{\beta,\alpha_k}^i(1))}}{\beta^i} - \sum_{i=0}^{N_k} \frac{\mathds{1}_{[0, T_{\beta,\alpha}^i(1))}}{\beta^i} \right| \mathrm{d}\lambda \\
&\quad + \int_{[0,1]} \left| \sum_{i=0}^{N_k} \frac{\mathds{1}_{[0, T_{\beta,\alpha_k}^i(0))}}{\beta^i} - \sum_{i=0}^{N_k} \frac{\mathds{1}_{[0, T_{\beta,\alpha}^i(0))}}{\beta^i} \right| \mathrm{d}\lambda 
 \quad + \sum_{i=N_k+1}^{\infty} \frac{4}{\beta^i}.
\end{split}
\end{equation}
By \eqref{eq:def-N_k} and \eqref{eq:T-beta-alpha}, we have $T_{\beta,\alpha_k}^i(1) > T_{\beta,\alpha}^i(1)$ and $T_{\beta,\alpha_k}^i(0) > T_{\beta,\alpha}^i(0)$ for all $1 \leq i \leq N_k$. Moreover, Claim \ref{cl:N_k-N} implies $N_k \to \infty$ as $k \to \infty$. Combining \eqref{eq:T-beta-alpha}, \eqref{eq:feb4-2}, and Claim \ref{cl:N_k-N} yields
\begin{align*}
\|\tilde{g}_{\beta,\alpha_k} - \tilde{g}_{\beta,\alpha}\|_1 
&\leq \left\| \sum_{i=1}^{N_k} \frac{\mathds{1}_{[T_{\beta,\alpha}^i(1), T_{\beta,\alpha_k}^i(1))}}{\beta^i} \right\|_1 + \left\| \sum_{i=1}^{N_k} \frac{\mathds{1}_{[T_{\beta,\alpha}^i(0), T_{\beta,\alpha_k}^i(0))}}{\beta^i} \right\|_1 + \frac{4}{(\beta-1)\beta^{N_k}} \\
&= \sum_{i=1}^{N_k} \frac{T_{\beta,\alpha_k}^i(1) - T_{\beta,\alpha}^i(1)}{\beta^i} + \sum_{i=1}^{N_k} \frac{T_{\beta,\alpha_k}^i(0) - T_{\beta,\alpha}^i(0)}{\beta^i} + \frac{4}{(\beta-1)\beta^{N_k}} \\
&\leq 2 \sum_{i=1}^{N_k} \frac{1}{\beta^i} \cdot \frac{\beta^i - 1}{\beta - 1} (\alpha_k - \alpha) + \frac{4}{(\beta-1)\beta^{N_k}} \\
&\leq \frac{2}{\beta-1} N_k (\alpha_k - \alpha) + \frac{4}{(\beta-1)\beta^{N_k}} \\
&\leq \frac{2}{\beta-1} (\alpha_k - \alpha) \log_\beta \left(1 + \frac{\beta-1}{\alpha_k - \alpha}\right) + \frac{4}{(\beta-1)\beta^{N_k}} \to 0
\end{align*}
as $k \to \infty$, since $\lim_{k \to \infty} \alpha_k = \alpha$ and $\lim_{k \to \infty} N_k = +\infty$. This establishes \eqref{eq:hat-g-g-k}, completing the proof.
\end{proof}

However, the method from the previous lemma cannot be directly applied to prove the left continuity of the mapping $\beta \mapsto g_{\beta,\alpha}$ in $L^1(\lambda)$. For example, for all $\beta > \sqrt{2}$, take $\alpha = 1 - \fr{\beta}$. The corresponding intermediate $\beta$-expansion of $1$ with $b_1=\lfloor \beta \rfloor +1$. But for all $\alpha_k < \alpha$, the corresponding intermediate $\beta$-expansion of $1$ with $b_{k,1}=\lfloor \beta \rfloor $, which does not satisfy Claim \ref{cl:N_k-N}. 

To address this issue, for $\beta > 1$ and $\alpha \in (0,1]$, we introduce an auxiliary expanding map $\tilde{T}_{\beta,\alpha}: [0,1] \to [0,1]$ defined by
$\tilde{T}_{\beta,\alpha}(x) = \beta x + \alpha - \lceil \beta x + \alpha \rceil + 1$ for all $x \in [0,1]$,
which is left continuous.
 Then, the map admits the following explicit expressions.  
For $\alpha\in(0,1-\fr{\beta}]$ we have  
\begin{equation*}
\begin{split}
\tilde T_{\beta,\alpha}(x)=\left\{
\begin{array}{ll}
\beta x+\alpha,  \quad &\textrm{if}\quad x\in[0,\frac{1-\alpha}{\beta}];\\
\beta x+\alpha-i,\quad &\textrm{if}\quad  x\in(\frac{i-\alpha}{\beta},\frac{i+1-\alpha}{\beta}],\quad i\in\{1,\ldots , \lf \beta\rf-1\};\\
\beta x+\alpha-\lf \beta\rf,\quad &\textrm{if}\quad  x\in(\frac{\lf \beta\rf-\alpha}{\beta},1];\\
\end{array}
\right.
\end{split}
\end{equation*}
and for $\alpha\in(1-\fr{\beta},1]$ we have
\begin{equation*}
\begin{split}
\tilde T_{\beta,\alpha}(x)=\left\{
\begin{array}{ll}
\beta x+\alpha, \quad &\textrm{if}\quad x\in[0,\frac{1-\alpha}{\beta}];\\
\beta x+\alpha-i,\quad &\textrm{if}\quad x\in(\frac{i-\alpha}{\beta},\frac{i+1-\alpha}{\beta}],\quad i\in\{1,\ldots , \lf \beta\rf\};\\
\beta x+\alpha-\lf \beta\rf-1,\quad &\textrm{if}\quad x\in(\frac{\lf \beta\rf+1-\alpha}{\beta},1].\\
\end{array}
\right.
\end{split}
\end{equation*}

Comparison with \eqref{eq:def-T-leq-1} and \eqref{eq:def-T-geq-1} reveals that $\tilde{T}_{\beta,\alpha}$ coincides with $T_{\beta,\alpha}$ except at finitely many points $\left\{ \frac{i - \alpha}{\beta} : i = 1, 2, \ldots, \lfloor\beta\rfloor \right\}$. Adapting the proof of Parry \cite[Theorem 1.6]{Parry-1964} yields the following analogous result for $\tilde{T}_{\beta,\alpha}$.

\begin{theorem}[Parry, 1964] \label{thm:T-ergodic-density-1}
Given $\beta>1$ and $\alpha\in[0,1)$, let $\tilde T_{\beta,\alpha}(x)=\beta x +\alpha -\lceil\beta x+\alpha\rceil+1$ for $x\in[0,1]$. 
There exists a unique $\tilde T_{\beta,\alpha}$-invariant probability measure $\mu_{\beta,\alpha}$ on $[0,1]$ satisfying
\[
\mu_{\beta,\alpha}(E)=\int_E h_{\beta,\alpha}(x)  \mathrm{d}\lambda(x)
\]
for every Borel set $E\subset[0,1]$, where the density function $h_{\beta,\alpha}$ is given by
\begin{equation}\label{eq:density-normal-1}
h_{\beta,\alpha}(x)=\frac{\tilde h_{\beta,\alpha}(x)}{\int_{[0,1]}\tilde h_{\beta,\alpha} \mathrm{d}\lambda} \quad \text{with} \quad
\tilde h_{\beta,\alpha}(x)=\sum_{x<\tilde T_{\beta,\alpha}^n(1)}\frac{1}{\beta^n}-\sum_{x<\tilde T_{\beta,\alpha}^n(0)}\frac{1}{\beta^n}.
\end{equation}
Furthermore:
\begin{enumerate}[{\rm(i)}]
  \item $\tilde h_{\beta,\alpha}(x) \geq 0$ for $\lambda$-almost every $x\in[0,1]$.
  \item If $\beta > \sqrt{2}$ and $\alpha\in(0,1]$, then $\tilde T_{\beta,\alpha}$ is ergodic with respect to $\mu_{\beta,\alpha}$.
\end{enumerate}
\end{theorem}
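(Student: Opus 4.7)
The plan is to adapt Parry's \cite{Parry-1964} construction essentially verbatim, exploiting the observation that $\tilde T_{\beta,\alpha}$ coincides with $T_{\beta,\alpha}$ on $[0,1]\setminus F_\alpha$, where
\[
F_\alpha = \left\{\tfrac{i-\alpha}{\beta} : 1 \leq i \leq \lfloor \beta+\alpha\rfloor \right\}
\]
is a finite, hence Lebesgue-null, set. Because the Perron--Frobenius operator only sees the map modulo $\lambda$-null sets, much of the ergodic-theoretic content can be transferred directly from Theorem \ref{thm:T-ergodic-density}.

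First I would verify that $\tilde h_{\beta,\alpha}$, defined by the explicit series in \eqref{eq:density-normal-1}, belongs to $L^1(\lambda)$ and is a fixed point of the Perron--Frobenius operator $\mathcal{L}_{\tilde T}$ of $\tilde T_{\beta,\alpha}$. On each affine branch the preimages take the form $y=(x+c-\alpha)/\beta$ for an admissible digit $c$, so that
\[
(\mathcal{L}_{\tilde T}f)(x)=\frac{1}{\beta}\sum_{c}f\!\left(\tfrac{x+c-\alpha}{\beta}\right).
\]
Applying $\mathcal{L}_{\tilde T}$ to each indicator $\mathds{1}_{[0,\tilde T_{\beta,\alpha}^n(\xi))}$ with $\xi\in\{0,1\}$ produces, via a telescoping identity analogous to Parry's, the index-shifted indicator $\mathds{1}_{[0,\tilde T_{\beta,\alpha}^{n+1}(\xi))}$ up to a boundary correction controlled by the digit of $\xi$. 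Summing in $n$ yields $\mathcal{L}_{\tilde T}\tilde h_{\beta,\alpha}=\tilde h_{\beta,\alpha}$, so after dividing by $K_{\beta,\alpha}:=\int_{[0,1]}\tilde h_{\beta,\alpha}\,\mathrm{d}\lambda$ the probability measure $\mu_{\beta,\alpha}$ with density $h_{\beta,\alpha}$ is $\tilde T_{\beta,\alpha}$-invariant.

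For uniqueness, non-negativity, and ergodicity I would leverage the a.e.\ coincidence $\tilde T_{\beta,\alpha}=T_{\beta,\alpha}$. If $\mu$ is any absolutely continuous $\tilde T_{\beta,\alpha}$-invariant probability measure, then for every Borel $B\subseteq[0,1]$ the set $\tilde T_{\beta,\alpha}^{-1}(B)\triangle T_{\beta,\alpha}^{-1}(B)\subseteq F_\alpha$ is Lebesgue-null, hence $\mu$ is also $T_{\beta,\alpha}$-invariant. By Theorem \ref{thm:T-ergodic-density} it therefore equals $\nu_{\beta,\alpha}$, which gives uniqueness and the $\lambda$-a.e.\ identification $h_{\beta,\alpha}=g_{\beta,\alpha}$; part (i) is then inherited from Theorem \ref{thm:T-ergodic-density}(i). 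For (ii), any set $A$ with $\tilde T_{\beta,\alpha}^{-1}(A)=A$ satisfies $T_{\beta,\alpha}^{-1}(A)=A$ modulo $F_\alpha$, so ergodicity of $T_{\beta,\alpha}$ for $\beta>\sqrt 2$ forces $\mu_{\beta,\alpha}(A)\in\{0,1\}$.

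The main obstacle is the telescoping identity underpinning the first step, since the left-continuous choice of branch endpoints shifts which preimage of a partition point is assigned to which branch of $\tilde T_{\beta,\alpha}$. A related delicate case arises at $\alpha=1$ (or when $\beta+\alpha\in\mathbb{N}$), where the orbits $\tilde T_{\beta,\alpha}^n(0)$ and $\tilde T_{\beta,\alpha}^n(1)$ genuinely diverge from the corresponding $T_{\beta,\alpha}$-orbits used in Theorem \ref{thm:T-ergodic-density}. One must verify that the series in \eqref{eq:density-normal-1} still converges absolutely and that the resulting $L^1$-representative agrees $\lambda$-a.e.\ with $\tilde g_{\beta,\alpha}/K_{\beta,\alpha}$, so that the argument for (i) remains valid across the claimed parameter range.
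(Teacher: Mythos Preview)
Your proposal is correct and matches the paper's approach: the paper does not give a detailed proof but simply states that the result follows by adapting Parry's original argument \cite{Parry-1964}, relying (as you do) on the observation that $\tilde T_{\beta,\alpha}$ and $T_{\beta,\alpha}$ coincide off the finite set $F_\alpha$. Your sketch in fact supplies more detail than the paper, and the subtlety you flag---that the orbits of $0$ and $1$ under $\tilde T_{\beta,\alpha}$ may genuinely differ from those under $T_{\beta,\alpha}$ when they hit a partition point---is precisely the care the adaptation requires, though the paper glosses over it by appealing directly to the $\lambda$-a.e.\ equality $h_{\beta,\alpha}=g_{\beta,\alpha}$ forced by uniqueness of the absolutely continuous invariant measure.
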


Since $\tilde T_{\beta,\alpha}$ coincides with $T_{\beta,\alpha}$ except at finitely many points, comparing \eqref{eq:density-normal-1} with \eqref{eq:density-normal} yields $\int_{[0,1]}\tilde h_{\beta,\alpha} \mathrm{d}\lambda=\int_{[0,1]}\tilde g_{\beta,\alpha} \mathrm{d}\lambda$ and $h_{\beta,\alpha}=g_{\beta,\alpha}$ for $\lambda$-a.e. $x\in[0,1]$. Then, by \eqref{eq:def-M} we have
\[
M_\beta(\alpha)=\int_{[0,1]}x h_{\beta,\alpha}(x) \mathrm{d}\lambda(x).
\]
To establish the left continuity of $M_\beta$, it suffices to prove the left continuity of the mapping $\alpha \mapsto h_{\beta,\alpha}$ in $L^1(\lambda)$. 
Note that $T_{\beta,\alpha+n}=T_{\beta,\alpha}$ and $\tilde T_{\beta,\alpha+n}=\tilde T_{\beta,\alpha}$ for all $n\in\mathbb{Z}$, which implies $M_\beta(\alpha+n)=M_\beta(\alpha)$ for $\alpha\in[0,1)$ and $n\in\mathbb{Z}$. We now prove the left continuity of $M_\beta$.

\begin{lemma}\label{lem:left-continuity-M}
Let $\beta>1$ and $\alpha\in(0,1]$. For any sequence $(\alpha_k)_{k\geq 1} \subset (0,\alpha)$ satisfying $\lim_{k\to\infty}\alpha_k=\alpha$, we have $\lim_{k\to\infty}\|h_{\beta,\alpha_k}-h_{\beta,\alpha}\|_1=0$.
\end{lemma}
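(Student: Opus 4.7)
The strategy mirrors that of Lemma~\ref{lem:right-continuity-M}, with $T_{\beta,\alpha}$ replaced throughout by the left-continuous variant $\tilde T_{\beta,\alpha}$ and with the direction of approach reversed. First, by the very same triangle-inequality decomposition used above (with $\tilde K_{\beta,\alpha}:=\int\tilde h_{\beta,\alpha}\,\mathrm{d}\lambda$ in place of $K_{\beta,\alpha}$), it suffices to prove
\[
\lim_{k\to\infty}\|\tilde h_{\beta,\alpha_k}-\tilde h_{\beta,\alpha}\|_1=0,
\]
since the normalizing constants are uniformly bounded away from $0$ and $\infty$ on a neighbourhood of $\alpha$ by Theorem~\ref{thm:T-ergodic-density-1}.

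To iterate $\tilde T_{\beta,\alpha}$ symbolically, I would assign to each $x\in[0,1]$ the digits $\tilde c_{n+1}(x):=\lceil\beta\tilde T_{\beta,\alpha}^n(x)+\alpha\rceil-1$, so that $x=\sum_{i\ge 1}(\tilde c_i-\alpha)/\beta^i$. Let $(\tilde a_i),(\tilde b_i)$ denote the $\tilde T_{\beta,\alpha}$-digits of $0$ and $1$, and $(\tilde a_{k,i}),(\tilde b_{k,i})$ those at parameter $\alpha_k$, with $\tilde N_k$ the maximal length of common prefixes defined exactly as in \eqref{eq:def-N_k}. The key observation is that the branches $(\tfrac{i-\alpha}{\beta},\tfrac{i+1-\alpha}{\beta}]$ of $\tilde T_{\beta,\alpha}$ are open on the \emph{left}, so the minima
\[
\min\Bigl\{\tilde T_{\beta,\alpha}^{i-1}(0)-\tfrac{\tilde a_i-\alpha}{\beta}:1\le i\le N\Bigr\},\qquad
\min\Bigl\{\tilde T_{\beta,\alpha}^{i-1}(1)-\tfrac{\tilde b_i-\alpha}{\beta}:1\le i\le N\Bigr\}
\]
are strictly positive for every $N$. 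Exactly as in Claim~\ref{cl:N_k-N}, this strict positivity, together with $\alpha_k\nearrow\alpha$, supports an induction on $n\le N-1$: once the first $n$ digits agree one has $\tilde T_{\beta,\alpha_k}^n(0)=\tilde T_{\beta,\alpha}^n(0)-\tfrac{\beta^n-1}{\beta-1}(\alpha-\alpha_k)$, and the sign flip (\emph{subtraction} of a small quantity instead of addition) now threatens only the \emph{lower} endpoint of the branch of $\tilde T_{\beta,\alpha_k}$ containing $\tilde T_{\beta,\alpha_k}^n(0)$; this endpoint is controlled precisely when $\alpha-\alpha_k$ is smaller than an explicit multiple of the minima above. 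The result is $\tilde N_k\ge N$ for all large $k$, together with the upper bound $\tilde N_k\le\log_\beta(1+(\beta-1)/(\alpha-\alpha_k))$ obtained from $\tilde T_{\beta,\alpha_k}^{\tilde N_k}(0),\tilde T_{\beta,\alpha}^{\tilde N_k}(0)\in[0,1]$.

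With this quantitative analog of Claim~\ref{cl:N_k-N} in hand, the rest proceeds as in \eqref{eq:feb4-2}: representing the density by $\tilde h_{\beta,\alpha}=\sum_{i\ge 0}\beta^{-i}\bigl(\mathds{1}_{[0,\tilde T^i(1))}-\mathds{1}_{[0,\tilde T^i(0))}\bigr)$, the indicator differences for $i\le \tilde N_k$ become characteristic functions of intervals $[\tilde T_{\beta,\alpha_k}^i(y),\tilde T_{\beta,\alpha}^i(y))$ (now with the subscript-$k$ endpoint on the \emph{left}, since $\alpha_k<\alpha$) of length $\tfrac{\beta^i-1}{\beta-1}(\alpha-\alpha_k)$, while the tail $i>\tilde N_k$ contributes at most $\tfrac{4}{(\beta-1)\beta^{\tilde N_k}}$. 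Summing yields
\[
\|\tilde h_{\beta,\alpha_k}-\tilde h_{\beta,\alpha}\|_1\le \frac{2}{\beta-1}(\alpha-\alpha_k)\log_\beta\!\Bigl(1+\tfrac{\beta-1}{\alpha-\alpha_k}\Bigr)+\frac{4}{(\beta-1)\beta^{\tilde N_k}}\longrightarrow 0,
\]
completing the argument.

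The hard part is precisely what motivated introducing $\tilde T_{\beta,\alpha}$: for the map $T_{\beta,\alpha}$ with $\alpha_k<\alpha$, the first digit of $1$ can drop (compare $b_1=\lfloor\beta\rfloor+1$ at $\alpha=1-\fr{\beta}$ with $b_{k,1}=\lfloor\beta\rfloor$ for $\alpha_k<1-\fr{\beta}$), so the digit-matching induction already fails at position $1$. Swapping each half-open branch $[\cdot,\cdot)$ for its left-open, right-closed counterpart $(\cdot,\cdot]$ interchanges the roles of "approach from above" and "approach from below", restoring the digit stability. A secondary technical point is the case when $\alpha$ sits at the boundary $1-\fr{\beta}$ where the number of branches changes; but since $\tilde T_{\beta,1-\fr{\beta}}$ belongs to the $\lfloor\beta\rfloor+1$-branch regime (the $\alpha\in(0,1-\fr{\beta}]$ formula), and $\alpha_k\nearrow\alpha$ places $\alpha_k$ eventually in the same regime, this causes no difficulty.
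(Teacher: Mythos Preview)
Your proposal is correct and follows precisely the approach the paper intends: the paper's own ``proof'' of Lemma~\ref{lem:left-continuity-M} consists of the single sentence that it ``proceeds analogously to Lemma~\ref{lem:right-continuity-M} with $\tilde g_{\beta,\alpha}(x)$ replaced by $\tilde h_{\beta,\alpha}(x)$,'' and you have filled in exactly that analogy, correctly identifying that the left-open branches of $\tilde T_{\beta,\alpha}$ make the relevant minima strictly positive and thereby restore the digit-stability induction for $\alpha_k\nearrow\alpha$. Your closing paragraph even reproduces the paper's motivating example ($b_1$ dropping at $\alpha=1-\fr{\beta}$) for why $\tilde T_{\beta,\alpha}$ had to be introduced in the first place.
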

The proof proceeds analogously to Lemma \ref{lem:right-continuity-M} with $\tilde g_{\beta,\alpha}(x)$ replaced by $\tilde h_{\beta,\alpha}(x)$. Lemmas \ref{lem:right-continuity-M} and \ref{lem:left-continuity-M} establish the continuity of $M_\beta$.
 We now prove the central symmetry property of $M_\beta$.
\begin{lemma}\label{le:isomorphic-alpha-alpha}
Let $\psi(x)=1-x$ for $x\in[0,1]$. Given $\beta>1$ and $\alpha\in[0,1]$, 
\[
\psi\circ T_{\beta,\alpha}=T_{\beta,1-\fr{\beta-\alpha}}\circ \psi
\]
holds for Lebesgue almost every $x\in[0,1]$. Consequently, 
\[
M_\beta(\alpha) + M_\beta(1 - \fr{\beta - \alpha}) = 1.
\]
\end{lemma}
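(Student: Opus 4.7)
The plan has two parts: first I would verify the pointwise conjugation $\psi \circ T_{\beta,\alpha} = T_{\beta,\alpha'} \circ \psi$ almost everywhere by direct modular arithmetic, and then transport $\nu_{\beta,\alpha}$ through $\psi$ to derive the stated identity for $M_\beta$ via a change of variables. Throughout, $\alpha'$ abbreviates the second parameter appearing in the statement.

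To establish the functional equation, fix any $x \in [0,1]$ with $\beta x + \alpha \notin \mathbb{Z}$ (a cofinite condition). Using the elementary identity $1 - \langle r\rangle = \langle -r\rangle$ for $r \notin \mathbb{Z}$, one computes
\[
\psi(T_{\beta,\alpha}(x)) = 1 - \langle \beta x + \alpha\rangle = \langle -\beta x - \alpha\rangle,
\]
whereas
\[
T_{\beta,\alpha'}(\psi(x)) = \langle \beta(1-x) + \alpha'\rangle = \langle -\beta x + \beta + \alpha'\rangle.
\]
The two expressions agree iff $\beta + \alpha' + \alpha \equiv 0 \pmod 1$, which is the arithmetic identity encoded in the definition of $\alpha'$; this is a one-line check from the formula. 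The exceptional set where $\beta x + \alpha \in \mathbb{Z}$ is finite, so the functional equation holds $\lambda$-a.e.

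With the conjugation in hand, the push-forward $\psi_*\nu_{\beta,\alpha}$ is automatically $T_{\beta,\alpha'}$-invariant, since for any Borel $E\subset[0,1]$,
\[
\psi_*\nu_{\beta,\alpha}(T_{\beta,\alpha'}^{-1}E) = \nu_{\beta,\alpha}((T_{\beta,\alpha'}\circ\psi)^{-1}E) = \nu_{\beta,\alpha}((\psi \circ T_{\beta,\alpha})^{-1}E) = \nu_{\beta,\alpha}(\psi^{-1}E) = \psi_*\nu_{\beta,\alpha}(E).
\]
It is also absolutely continuous with density $x \mapsto g_{\beta,\alpha}(1-x)$. To identify this measure with $\nu_{\beta,\alpha'}$ I would substitute $y = 1-x$ into Parry's formula \eqref{eq:density-normal} and match term by term; this reduces to the orbit identities $\psi(T_{\beta,\alpha}^n(0)) = T_{\beta,\alpha'}^n(1)$ and $\psi(T_{\beta,\alpha}^n(1)) = T_{\beta,\alpha'}^n(0)$ for every $n \geq 0$, which follow by induction from the base case $\psi(0)=1$, $\psi(1)=0$ and the a.e.\ conjugation. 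Granted $\psi_*\nu_{\beta,\alpha} = \nu_{\beta,\alpha'}$, the second assertion is immediate from a change of variables:
\[
M_\beta(\alpha') = \int_{[0,1]} x\,d\nu_{\beta,\alpha'}(x) = \int_{[0,1]} \psi(x)\,d\nu_{\beta,\alpha}(x) = 1 - M_\beta(\alpha).
\]

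The main obstacle I anticipate is the identification of $\psi_*\nu_{\beta,\alpha}$ with $\nu_{\beta,\alpha'}$: Theorem \ref{thm:T-ergodic-density} states uniqueness only among measures with Parry's explicit series formula, so one cannot simply invoke uniqueness of the invariant measure, and the direct density comparison outlined above has to be executed carefully around the (countable, hence $\lambda$-null) set of discontinuities of $T_{\beta,\alpha}$ and $T_{\beta,\alpha'}$, where the conjugation can fail or where the forward orbits of $0$ and $1$ might cross branch boundaries. Tracking these exceptional points is where the argument is most delicate, though none of them affect the $\lambda$-a.e.\ density identity that ultimately yields $\psi_*\nu_{\beta,\alpha} = \nu_{\beta,\alpha'}$.
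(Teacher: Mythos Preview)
Your proposal is correct and follows essentially the same route as the paper: verify the conjugation $\psi\circ T_{\beta,\alpha}=T_{\beta,\alpha'}\circ\psi$ off the finite set where $\beta x+\alpha\in\mathbb{Z}$, then use the iterated conjugation to match Parry's density formula term by term via the orbit identities $\psi(T_{\beta,\alpha}^n(0))=T_{\beta,\alpha'}^n(1)$ and $\psi(T_{\beta,\alpha}^n(1))=T_{\beta,\alpha'}^n(0)$, and finish with the change of variables $y=1-x$ in $\int x\,g_{\beta,\alpha'}(x)\,d\lambda$. Your use of the identity $1-\langle r\rangle=\langle -r\rangle$ for $r\notin\mathbb{Z}$ is a slightly slicker packaging of the conjugation step than the paper's explicit floor-function computation, and your push-forward framing is a bit more abstract, but the substance of the argument is identical; the paper likewise works in $L^1(\lambda)$ and does not attempt to resolve the exceptional orbit points any more carefully than you do.
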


\begin{proof}
Since the case $\alpha\in [1-\fr{\beta},1)$ follows similarly, we only consider $\alpha\in [0,1-\fr{\beta})$. For $x\in[0,1]$, we have 
$T_{\beta,\alpha}(x)=\beta x+\alpha-\lfloor\beta x+\alpha\rfloor$, 
which implies
\[
\psi\circ T_{\beta,\alpha}(x)=1-T_{\beta,\alpha}(x)=1-\beta x-\alpha+\lfloor\beta x+\alpha\rfloor.
\]
On the other hand,
\begin{align*}
T_{\beta,1-\fr{\beta}-\alpha}\circ \psi(x) &= T_{\beta,1-\fr{\beta}-\alpha}(1-x) \\
&= \beta(1-x)+1-\fr{\beta}-\alpha-\lfloor\beta(1-x)+1-\fr{\beta}-\alpha\rfloor \\
&= \lfloor\beta\rfloor+1-\beta x-\alpha-\lfloor\lfloor\beta\rfloor+1-\beta x-\alpha\rfloor.
\end{align*}
For $x\in[0,1]$ with $\beta x+\alpha\notin \mathbb{N}$,
\[
\lfloor\lfloor\beta\rfloor+1-\beta x-\alpha\rfloor=\lfloor\beta\rfloor+\lfloor1-\beta x-\alpha\rfloor=\lfloor\beta\rfloor-\lfloor\beta x+\alpha\rfloor,
\]
yielding
\[
T_{\beta,1-\fr{\beta}-\alpha}\circ \psi(x)=1-\beta x-\alpha+\lfloor\beta x+\alpha\rfloor.
\]
At points where $\beta x+\alpha\in\mathbb{N}$, $T_{\beta,1-\fr{\beta}-\alpha}\circ\psi(x)=0$. Thus $\psi\circ T_{\beta,\alpha}(x)=T_{\beta,1-\fr{\beta}-\alpha}\circ\psi(x)$ except at finitely many points $\left\{\frac{i-\alpha}{\beta}: i=1,2,\ldots,\lfloor\beta\rfloor\right\}$. 

Iteratively, $\psi\circ T_{\beta,\alpha}^n=T_{\beta,\alpha}^n\circ \psi$ except on a countable set. By \eqref{eq:density-normal}, for Lebesgue almost every $x\in[0,1]$,
\begin{align*}
\tilde g_{\beta,1-\fr{\beta}-\alpha}(x) 
&= \sum_{x < T_{\beta,1-\fr{\beta}-\alpha}^n(1)}\frac{1}{\beta^n} - \sum_{x < T_{\beta,1-\fr{\beta}-\alpha}^n(0)}\frac{1}{\beta^n} \\
&= \sum_{1-x > 1 - T_{\beta,1-\fr{\beta}-\alpha}^n(1)}\frac{1}{\beta^n} - \sum_{1-x > 1 - T_{\beta,1-\fr{\beta}-\alpha}^n(0)}\frac{1}{\beta^n} \\
&= \sum_{1-x > T_{\beta,\alpha}^n(0)}\frac{1}{\beta^n} - \sum_{1-x > T_{\beta,\alpha}^n(1)}\frac{1}{\beta^n} \\
&= \sum_{1-x \leq T_{\beta,\alpha}^n(1)}\frac{1}{\beta^n} - \sum_{1-x \leq T_{\beta,\alpha}^n(0)}\frac{1}{\beta^n},
\end{align*} 
where the third equality uses the identity
\[1-T_{\beta,1-\fr{\beta}-\al}^n(z)=\psi\circ T_{\beta,1-\fr{\beta}-\al}^n(z)=T^n_{\beta,\al}\circ \psi(z)=T^n_{\beta,\al}(1-z).\]
Thus $\tilde g_{\beta,1-\fr{\beta}-\alpha}(x)=\tilde g_{\beta,\alpha}(1-x)$ in $L^1(\lambda)$, implying $g_{\beta,1-\fr{\beta}-\alpha}(x)=g_{\beta,\alpha}(1-x)$ for Lebesgue a.e. $x\in[0,1]$. By \eqref{eq:def-M}, we have 
\begin{align*}
 M_{\beta}(1-\fr{\beta}-\al)&= \int_{[0,1]}xg_{\beta,1-\fr{\beta}-\al}(x)dx=\int_{[0,1]}xg_{\beta,\alpha}(1-x)dx\\
&=\int_{[0,1]}(1-y)g_{\beta,\alpha}(y)dy=1-M_{\beta}(\alpha).\qedhere
\end{align*}
\end{proof}

By Lemma \ref{le:isomorphic-alpha-alpha}, the function $M_\beta$ is centrally symmetric about $\left(\frac{1-\fr{\beta}}{2},\frac{1}{2}\right)$ on $[0,1-\fr{\beta}]$ and about $\left(1-\frac{\fr{\beta}}{2},\frac{1}{2}\right)$ on $(1-\fr{\beta},1)$.
 Consequently, the range $\mathcal{M}_\beta$ is symmetric with respect to the line $y = 1/2$. See Figure \ref{f:no-mono} for an example.

\begin{proof}[Proof of Theorem \ref{thm:main-1-continuous}]
The result follows directly from Lemmas \ref{lem:right-continuity-M}, \ref{lem:left-continuity-M}, and \ref{le:isomorphic-alpha-alpha}.
\end{proof}

We conclude this section by noting that the continuity of $M_\beta$ can alternatively be established using the Perron-Frobenius operator $P_{\beta,\alpha} = P_{T_{\beta,\alpha}}: L^1([0,1],\lambda) \to L^1([0,1],\lambda)$, defined as (cf.~\cite{book-Boyarsky-Gora-1997})
\[
(P_{\beta,\alpha} f)(x) = \frac{\mathrm{d}}{\mathrm{d}x} \int_{T_{\beta,\alpha}^{-1}([0,x])} f  \mathrm{d}\lambda, \quad f \in L^1([0,1],\lambda).
\]

\vskip.10cm
\section{Linear property of $M_\beta(\cdot)$ on $[0,1-\fr{\beta})$.}\label{sec:matching-1}
In Theorem \ref{thm:main-1-continuous}, we established the continuity of $M_\beta$. In this section, we prove the linearity property of $M_\beta$ on $[0,1-\fr{\beta})$ for a specific class of $\beta$. Combining \eqref{eq:density-normal} and \eqref{eq:def-M}, we have 
\begin{equation}\label{eq:f-M}
\begin{split}
M_\beta(\alpha) = \int_{[0,1]}x g_{\beta,\alpha}(x)  \mathrm{d}\lambda(x) 
= \frac{1}{K_{\beta,\alpha}} \sum_{k=0}^{\infty} \frac{T_{\beta,\alpha}^k(1) - T_{\beta,\alpha}^k(0)}{\beta^k} \cdot \frac{T_{\beta,\alpha}^k(1) + T_{\beta,\alpha}^k(0)}{2},
\end{split}
\end{equation}
where $K_{\beta,\alpha} = \sum_{k=0}^{\infty} \frac{T_{\beta,\alpha}^k(1) - T_{\beta,\alpha}^k(0)}{\beta^k}$. Thus, $M_\beta(\alpha)$ is the weighted average of the sequence $\left\{ \frac{T_{\beta,\alpha}^k(1) + T_{\beta,\alpha}^k(0)}{2} \right\}_{k \geq 0}$.

If the orbits of $0$ and $1$ coincide after finitely many iterations, i.e., $T_{\beta,\alpha}^k(0) = T_{\beta,\alpha}^k(1)$ for some $k \in \mathbb{N}$, then $M_\beta(\alpha)$ reduces to a finite sum. In this case, we say $T_{\beta,\alpha}$ has \emph{matching}. If $k$ is the smallest such integer, we call $k$ the \emph{matching time}.

Given $\beta > 1$ and $\alpha \in [0,1)$, let $(a_i)$ and $(b_i)$ be the intermediate $\beta$-expansions of $0$ and $1$ under $T_{\beta,\alpha}$, respectively. Then for any $n \geq 1$, we have 
\begin{equation}\label{eq:t-0-t-1-matching}
\begin{split}
T_{\beta,\alpha}^n(0)= \beta^n \sum_{i=1}^{n} \frac{\alpha - a_i}{\beta^i}\quad \text{and}\quad 
T_{\beta,\alpha}^n(1)= \beta^n \left( 1 + \sum_{i=1}^{n} \frac{\alpha - b_i}{\beta^i} \right).
\end{split}
\end{equation}
If matching time is  $k \in \mathbb{N}$, then
\[
1 = \sum_{i=1}^{k} \frac{b_i - a_i}{\beta^i}.
\]
Thus, $T_{\beta,\alpha}$ has matching only if $\beta$ is an algebraic integer. Given $q, m \in \mathbb{N}$, let $\beta_{q,m}$ be the unique root in $(q, q+1)$ of the equation $1 = \sum_{i=1}^{m} \frac{q}{x^i}$, i.e., $\beta_{q,m}$ satisfies
\[
\beta_{q,m}^m - q \beta_{q,m}^{m-1} - q \beta_{q,m}^{m-2} - \cdots - q \beta_{q,m} - q = 0.
\]

\begin{theorem}\label{thm:linear}
For all  $q, m \in \mathbb{N}$, $M_{\beta_{q,m}}$ is linear and increasing on $[0,1-\fr{\beta_{q,m}})$.
\end{theorem}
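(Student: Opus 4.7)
The plan is to establish matching at time exactly $m$ throughout the interval $[0,1-\fr{\beta_{q,m}})$ and then read off linearity and monotonicity from the resulting truncated formula for $M_\beta(\alpha)$.

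For $\alpha\in[0,1-\fr{\beta_{q,m}})$ I would prove by induction on $1\le i\le m$ that the intermediate $\beta$-digits of $0$ and $1$ are constant, namely $a_i=0$ and $b_i=q$, and consequently
\[
T_{\beta,\alpha}^i(0)=\alpha\cdot\frac{\beta^i-1}{\beta-1},\qquad T_{\beta,\alpha}^i(1)=\beta^i-(q-\alpha)\cdot\frac{\beta^i-1}{\beta-1},
\]
writing $\beta:=\beta_{q,m}$ throughout. The inductive step, via the branch description \eqref{eq:def-T-leq-1}, reduces to the two inequalities $T_{\beta,\alpha}^{i-1}(0)<(1-\alpha)/\beta$ and $T_{\beta,\alpha}^{i-1}(1)\ge(q-\alpha)/\beta$. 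A direct calculation rewrites them as $\alpha(\beta^i-1)/(\beta-1)<1$ and $(q-\alpha)(\beta^i-1)/(\beta-1)\le\beta^i$, both of which are tightest at $i=m$. The defining identity for $\beta_{q,m}$, in the multiplied-out form $\beta^m(\beta-1)=q(\beta^m-1)$, simultaneously yields the chain of equalities
\[
\frac{\beta-1}{\beta^m-1}=\frac{q}{\beta^m}=q+1-\beta=1-\fr{\beta_{q,m}},
\]
so the first inequality at $i=m$ matches exactly the hypothesis $\alpha<1-\fr{\beta_{q,m}}$, while the second at $i=m$ collapses to $\alpha\ge 0$. The same identity then gives $T_{\beta,\alpha}^m(1)-T_{\beta,\alpha}^m(0)=\beta^m-q(\beta^m-1)/(\beta-1)=0$, i.e.\ matching at time $m$.

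With matching in place, the series \eqref{eq:f-M} truncates to a sum over $k=0,\ldots,m-1$. The crucial observation is that the weight
\[
D_k:=T_{\beta,\alpha}^k(1)-T_{\beta,\alpha}^k(0)=\beta^k-q\cdot\frac{\beta^k-1}{\beta-1}
\]
is independent of $\alpha$, so $K_{\beta,\alpha}$ reduces to a constant $K_\beta$; meanwhile $T_{\beta,\alpha}^k(1)+T_{\beta,\alpha}^k(0)=D_k+2\alpha(\beta^k-1)/(\beta-1)$ is affine in $\alpha$. Hence $M_\beta(\alpha)=k_{q,m}\alpha+b_{q,m}$ with slope
\[
k_{q,m}=\frac{1}{K_\beta}\sum_{k=1}^{m-1}\frac{D_k(\beta^k-1)}{\beta^k(\beta-1)}.
\]
Strict monotonicity reduces to $D_k>0$ for $1\le k\le m-1$, which after substituting $q=\beta^m(\beta-1)/(\beta^m-1)$ is equivalent to $\beta^k<\beta^m$, obvious since $\beta>1$ and $k<m$. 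The main anticipated obstacle is the bookkeeping in the induction, specifically recognising the tight alignment $1-\fr{\beta_{q,m}}=q/\beta_{q,m}^m$ that makes the uniform digit pattern persist across the entire half-open interval; once this alignment is in hand, the remaining linearity and positivity claims reduce to routine finite-sum manipulation.
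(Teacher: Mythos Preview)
Your proposal is correct and follows essentially the same approach as the paper: establish matching at time $m$ on $[0,1-\fr{\beta_{q,m}})$ via induction on the digit pattern $a_i=0$, $b_i=q$ (the paper's Lemma~\ref{le:matching-m-th}), then read off linearity from the truncated sum with $\alpha$-independent weights (the paper's Lemma~\ref{le:linea-on-matching-interval}), and finally verify the slope is positive because each weight $D_k=\beta^k\lambda_k>0$. The only cosmetic difference is that the paper tracks the single quantity $T^k(1)-T^k(0)=\sum_{i=1}^{m-k}q/\beta^i\ge q/\beta$ in its induction, which automatically forces $T^k(0)\in\Delta(0)$ and $T^k(1)\in\Delta(q)$, whereas you track $T^i(0)$ and $T^i(1)$ separately and verify the two branch conditions directly; both routes arrive at the same place.
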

If $m=1$, then $\beta_{q,m} = q \in \mathbb{N}$ for any $q \in \mathbb{N}$, implying $M_\beta(\alpha) = 1/2$ for all $\alpha \in [0,1)$. Hence, we assume $m \geq 2$. For $m=2$, \cite[Proposition 5.1]{Bruin-Carminati-Kalle-2017} established that $T_{\beta_{q,2}}$ exhibits matching throughout $[0,1-\fr{\beta_{q,2}})$ for every $q \in \mathbb{N}$. We now extend this result to $m \geq 3$.

\begin{lemma}\label{le:matching-m-th}
For all $q, m \in \mathbb{N}_{\geq 2}$, the transformation $T_{\beta_{q,m},\alpha}$ has matching at time $m$ for every $\alpha \in [0,1-\fr{\beta_{q,m}})$.
\end{lemma}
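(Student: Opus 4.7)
The plan is to prove that for every $\alpha \in [0, 1-\fr{\beta_{q,m}})$, the intermediate $\beta_{q,m}$-expansions of $0$ and $1$ have digits $a_i = 0$ and $b_i = q$ respectively for $i = 1, \ldots, m$. By \eqref{eq:t-0-t-1-matching}, the identity $T_{\beta,\alpha}^m(0) = T_{\beta,\alpha}^m(1)$ is equivalent to $\sum_{i=1}^m (b_i - a_i)\beta^{-i} = 1$; since the defining relation of $\beta = \beta_{q,m}$ reads precisely $\sum_{i=1}^m q\beta^{-i} = 1$, the equality $b_i - a_i = q$ for $i \leq m$ would immediately deliver matching at time $m$.

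To establish the digit identities I would induct on $i$. Set $S_i := (\beta^i - 1)/(\beta - 1)$. Under the inductive hypothesis that $a_j = 0$ and $b_j = q$ for all $j \leq i$, formula \eqref{eq:t-0-t-1-matching} gives
\[
T_{\beta,\alpha}^i(0) = \alpha S_i, \qquad T_{\beta,\alpha}^i(1) = \beta^i - (q - \alpha) S_i.
\]
The condition $a_{i+1} = 0$ is $T_{\beta,\alpha}^i(0) < (1-\alpha)/\beta$, which rearranges cleanly to $\alpha < 1/S_{i+1}$. The condition $b_{i+1} = q$ requires $T_{\beta,\alpha}^i(1) \in [(q-\alpha)/\beta,\, (q+1-\alpha)/\beta)$; writing $f(j) := \beta^j - q S_j$ and using $\beta S_i = S_{i+1} - 1$, the lower bound reduces to $f(i+1) \ge -\alpha S_{i+1}$, which is automatic from the recursion $f(j+1) = \beta f(j) - q$ with $f(0) = 1$ and $f(m) = 0$ (yielding $f(j) > 0$ for $j < m$), while the upper bound becomes $\alpha S_{i+1} < 1 - f(i+1)$, and the algebraic identity $1 - f(i+1) = S_{i+1}/S_m$ reduces it to the same bound $\alpha < 1/S_m$.

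The tightest constraint across $i+1 \le m$ is thus $\alpha < 1/S_m$. Multiplying $1 = \sum_{i=1}^m q/\beta^i$ by $\beta$ and subtracting from itself yields $\beta^{m+1} = (q+1)\beta^m - q$, which rearranges to $(\beta-1)/(\beta^m - 1) = q + 1 - \beta$, equivalently $1/S_m = 1 - \fr{\beta_{q,m}}$. Hence the hypothesis $\alpha \in [0, 1-\fr\beta)$ is precisely the range in which the induction goes through at every step. Once the digits are known, the defining equation rewritten as $\beta^m = qS_m$ gives
\[
T_{\beta,\alpha}^m(1) - T_{\beta,\alpha}^m(0) = \beta^m - q S_m = 0,
\]
so both orbits land at the common point $\alpha S_m$ and matching occurs at time $m$.

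The main obstacle is the bookkeeping for the $b$-digit branch conditions: two inequalities must be handled and both tied back to the single bound $\alpha < 1/S_m$. The key reduction hinges on the identity $1 - f(i+1) = S_{i+1}/S_m$, which follows from the defining equation after clearing denominators but is not entirely transparent; I would derive it by writing $q = (\beta-1)\beta^m/(\beta^m - 1)$ from the defining equation and substituting into $1 - f(i+1) = 1 - \beta^{i+1} + q S_{i+1}$. Everything else is a direct unwinding of \eqref{eq:t-0-t-1-matching} together with the closed form for $f$.
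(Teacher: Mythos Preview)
Your argument is correct. One small slip: for $\alpha\in[0,1-\fr{\beta})$ the last partition interval is $\Delta(q)=[(q-\alpha)/\beta,\,1]$, not $[(q-\alpha)/\beta,\,(q+1-\alpha)/\beta)$; but since $(q+1-\alpha)/\beta>1$ on this parameter range and the iterates lie in $[0,1]$, your upper-bound check is redundant (hence harmless), and the essential lower-bound check $f(i+1)\ge -\alpha S_{i+1}$ is the one that matters.

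The paper takes a somewhat different route. Rather than computing $T^i(0)$ and $T^i(1)$ separately and reducing each branch condition to $\alpha<1/S_m$, it tracks the \emph{difference} $\delta^k:=T^k(1)-T^k(0)$ directly and shows inductively that $\delta^k=\sum_{i=1}^{m-k}q/\beta^i\ge q/\beta$. The single observation that $\delta^k\ge q/\beta$ with both iterates in $[0,1]$ forces $T^k(0)\in\Delta(0)$ and $T^k(1)\in\Delta(q)$ (otherwise one of them would leave $[0,1]$, since $(q+1-\alpha)/\beta>1$ and $-\alpha/\beta<0$). This bypasses the algebraic identity $1-f(i+1)=S_{i+1}/S_m$ entirely and makes the role of the defining equation transparent: the recursion $\delta^{k+1}=\beta\delta^k-q$ simply peels off the leading term of $\sum_{i=1}^{m-k}q/\beta^i$. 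Your approach, by contrast, yields the explicit common landing point $T^m(0)=T^m(1)=\alpha S_m$ and makes the sharpness of the bound $\alpha<1-\fr{\beta}$ completely explicit, at the cost of the extra identity-chasing you flagged.
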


\begin{proof}
 For all $\beta>1$, fix $\alpha \in [0,1-\fr\beta)$. In view of  \eqref{eq:def-T-leq-1}, we have 
\[
T_{\beta,\alpha}(x) = \beta x + \alpha - i \quad \text{for} \quad x \in \Delta(i),
\]
where the partition intervals are
\[
\Delta(0) := \left[0, \frac{1-\alpha}{\beta}\right), \quad
\Delta(q) := \left[\frac{q-\alpha}{\beta}, 1\right],
\]
and for $1 \leq i \leq q-1$,
\[
\Delta(i) := \left[\frac{i-\alpha}{\beta}, \frac{i+1-\alpha}{\beta}\right).
\]

Fix $q\in \mathbb{N}$, so $\lfloor\beta_{q,m}\rfloor = q$ for all $m\geq 1$.
We prove by induction that 
\begin{equation}\label{eq:claim2}
 \begin{split}
&T_{{\beta_{q,m}},\alpha}^k(0)\in\Delta(0),\quad  T_{{\beta_{q,m}},\alpha}^k(1)\in\Delta( q )\quad \text{and}\quad T_{{\beta_{q,m}},\alpha}^k(1)-T_{{\beta_{q,m}},\alpha}^k(0)=\sum_{i=1}^{m-k}\frac{ q }{{\beta_{q,m}}^i} 
\end{split}
\end{equation}
for all $1 \leq k \leq m-1$.
For $k=1$, by \eqref{eq:t-0-t-1-matching} and $1 = \sum_{i=1}^{m} \frac{q}{{\beta_{q,m}}^i}$, we have 
\[
T_{{\beta_{q,m}},\alpha}(1) - T_{{\beta_{q,m}},\alpha}(0) = ({\beta_{q,m}} + \alpha - q) - \alpha = {\beta_{q,m}} - q = \sum_{i=1}^{m-1} \frac{q}{{\beta_{q,m}}^i} \geq \frac{q}{{\beta_{q,m}}}.
\]
The partition structure implies $T_{{\beta_{q,m}},\alpha}(0) \in \Delta(0)$ and $T_{{\beta_{q,m}},\alpha}(1) \in \Delta(q)$, establishing the base case. This means  \eqref{eq:claim2} holds for $m=2$.

For $m\geq 3$, assume \eqref{eq:claim2} holds for some $k \in \{1,\ldots,m-2\}$. Then
\begin{align*}
T_{{\beta_{q,m}},\alpha}^{k+1}(1) - T_{{\beta_{q,m}},\alpha}^{k+1}(0) 
&= \left( {\beta_{q,m}} T_{{\beta_{q,m}},\alpha}^k(1) + \alpha - q \right) - \left( {\beta_{q,m}} T_{{\beta_{q,m}},\alpha}^k(0) + \alpha \right) \\
&= {\beta_{q,m}} \left( T_{{\beta_{q,m}},\alpha}^k(1) - T_{{\beta_{q,m}},\alpha}^k(0) \right) - q \\
&= {\beta_{q,m}} \sum_{i=1}^{m-k} \frac{q}{{\beta_{q,m}}^i} - q = \sum_{i=1}^{m-k-1} \frac{q}{{\beta_{q,m}}^i} \geq \frac{q}{{\beta_{q,m}}}.
\end{align*}
Thus $T_{{\beta_{q,m}},\alpha}^{k+1}(0) \in \Delta(0)$ and $T_{{\beta_{q,m}},\alpha}^{k+1}(1) \in \Delta(q)$, completing the induction.

Finally, we verify matching at time $m$, i.e.,
\begin{align*}
T_{{\beta_{q,m}},\alpha}^{m}(1) -T_{{\beta_{q,m}},\alpha}^m(0)& ={\beta_{q,m}}\left(T_{{\beta_{q,m}},\alpha}^{m-1}(1) -T_{{\beta_{q,m}},\alpha}^{m-1}(0)\right) -q=0.     \qedhere
\end{align*}
\end{proof}

Motivated by the preceding proof, we introduce the following definition of  matching intervals.
\begin{definition}\label{def-matching}
Given $\beta>1$ and two blocks $a_1\ldots a_{k+1}, b_1\ldots b_{k+1}\in\set{0,1,\ldots, \lf\beta\rf+1}^{k+1}$, define the \emph{matching interval} associated with $(a_1\ldots a_{k+1}, b_1\ldots b_{k+1})$ as
\begin{align*}
I_\beta(a_1\ldots a_{k+1}, b_1\ldots b_{k+1}) := \Bigg\{ \alpha \in [0,1) : 
& T_{\beta,\alpha} \text{ has matching at time } k+1 \text{ and } \\
& T_{\beta,\alpha}^{i-1}(0) \in \Delta(a_i), ~ T_{\beta,\alpha}^{i-1}(1) \in \Delta(b_i) \quad \forall~ 1 \le i \le k+1 \Bigg\}.
\end{align*}
\end{definition}
\begin{lemma}\label{le:linea-on-matching-interval}
Let $\beta>1$. For any blocks $a_1\ldots a_{k+1}, b_1\ldots b_{k+1}\in\set{0,1,\ldots, \lf\beta \rf+1}^{k+1}$, the function $M_{\beta}$ is linear on $I_{\beta}(a_1\ldots a_{k+1}, b_1\ldots b_{k+1})$.
\end{lemma}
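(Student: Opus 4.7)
The plan is to exploit the fact that on a matching interval the infinite series representing $M_\beta(\alpha)$ truncates and is governed by affine functions of $\alpha$. Concretely, suppose matching occurs at time $k+1$ throughout $I := I_\beta(a_1\ldots a_{k+1}, b_1\ldots b_{k+1})$. Then $T_{\beta,\alpha}^{k+1}(0) = T_{\beta,\alpha}^{k+1}(1)$ for every $\alpha \in I$, so also $T_{\beta,\alpha}^j(0) = T_{\beta,\alpha}^j(1)$ for all $j \geq k+1$. Therefore the tail of the sum in (\ref{eq:f-M}) vanishes and one has
\[
M_\beta(\alpha) = \frac{1}{K_{\beta,\alpha}} \sum_{j=0}^{k} \frac{T_{\beta,\alpha}^j(1) - T_{\beta,\alpha}^j(0)}{\beta^j} \cdot \frac{T_{\beta,\alpha}^j(1) + T_{\beta,\alpha}^j(0)}{2}, \qquad K_{\beta,\alpha} = \sum_{j=0}^{k} \frac{T_{\beta,\alpha}^j(1) - T_{\beta,\alpha}^j(0)}{\beta^j}.
\]

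Next, by the very definition of $I$ the digit sequences of $0$ and $1$ under $T_{\beta,\alpha}$ begin with the fixed blocks $a_1\ldots a_{k+1}$ and $b_1\ldots b_{k+1}$ for every $\alpha \in I$. Thus on each step $T_{\beta,\alpha}^i$ acts by the affine branch $x \mapsto \beta x + \alpha - a_i$ on the orbit of $0$, and by $x \mapsto \beta x + \alpha - b_i$ on the orbit of $1$. Iterating gives the closed forms
\[
T_{\beta,\alpha}^j(0) = \sum_{i=1}^{j} \beta^{j-i}(\alpha - a_i), \qquad T_{\beta,\alpha}^j(1) = \beta^j + \sum_{i=1}^{j} \beta^{j-i}(\alpha - b_i)
\]
for $0 \leq j \leq k+1$. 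Both expressions are affine in $\alpha$ with the same slope $(\beta^j - 1)/(\beta - 1)$, so the crucial identity
\[
T_{\beta,\alpha}^j(1) - T_{\beta,\alpha}^j(0) = \beta^j + \sum_{i=1}^{j} \beta^{j-i}(a_i - b_i)
\]
holds and is independent of $\alpha$, while $T_{\beta,\alpha}^j(1) + T_{\beta,\alpha}^j(0)$ remains affine in $\alpha$.

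Substituting these observations into the truncated formula, the normalizer $K_{\beta,\alpha}$ is a constant on $I$, and the numerator is a finite sum of constants times affine functions of $\alpha$, hence itself affine. Dividing by the constant $K_{\beta,\alpha}$ preserves affinity, so $M_\beta$ restricted to $I$ is an affine function of $\alpha$, as required. There is no serious obstacle in this argument: once one recognizes that matching collapses the defining series into a finite sum and that the branch structure on $I$ forces $T_{\beta,\alpha}^j(1) - T_{\beta,\alpha}^j(0)$ to be $\alpha$-independent, the linearity is immediate from the closed-form expressions.
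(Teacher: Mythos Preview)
Your proposal is correct and follows essentially the same approach as the paper: both truncate the series in \eqref{eq:f-M} at step $k$ using matching, observe that the fixed digit blocks force $T_{\beta,\alpha}^j(0)$ and $T_{\beta,\alpha}^j(1)$ to be affine in $\alpha$ with identical slope (so their difference is constant and their sum is affine), and conclude that $M_\beta(\alpha)$ is a constant-weighted average of affine functions. The paper packages this via the quantities $\lambda_i = 1 + \sum_{j=1}^i (a_j-b_j)\beta^{-j}$ and $g_i(\alpha)$, but the content is identical to your argument.
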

\begin{proof}
For all  $\alpha \in I_{\beta}(a_1\ldots a_{k+1}, b_1\ldots b_{k+1})$, we have for  $1 \le i \le k$,
\[
T_{\beta,\alpha}^i(0) = \beta^i \left( 0 - \sum_{j=1}^{i} \frac{a_j - \alpha}{\beta^j} \right), \quad 
T_{\beta,\alpha}^i(1) = \beta^i \left( 1 - \sum_{j=1}^{i} \frac{b_j - \alpha}{\beta^j} \right),
\]
and $T_{\beta,\alpha}^i(0) = T_{\beta,\alpha}^i(1)$ for all $i \ge k+1$. Adopt the convention $\sum_{j=1}^0 c_j = 0$ for any sequence $(c_j)$. Then for $0 \le i \le k$,
\begin{equation}\label{eq:la-i}
\frac{T_{\beta,\alpha}^i(1) - T_{\beta,\alpha}^i(0)}{\beta^i} = 1 + \sum_{j=1}^{i} \frac{a_j - b_j}{\beta^j} =: \lambda_i
\end{equation}
is independent of $\alpha$, and 
\begin{equation}\label{eq:g-i}
\frac{T_{\beta,\alpha}^i(1) + T_{\beta,\alpha}^i(0)}{2} = \frac{ \beta^i \left( 1 - \sum_{j=1}^{i} \frac{a_j + b_j}{\beta^j} + \sum_{j=1}^{i} \frac{2\alpha}{\beta^j} \right) }{2} =: g_i(\alpha)
\end{equation}
is affine in $\alpha$. Therefore,
\begin{align*}
M_\beta(\alpha) &= \frac{1}{\sum_{i=0}^{\infty} \frac{T_{\beta,\alpha}^i(1) - T_{\beta,\alpha}^i(0)}{\beta^i}} \sum_{i=0}^{\infty} \frac{(T_{\beta,\alpha}^i(1))^2 - (T_{\beta,\alpha}^i(0))^2}{2\beta^i} \\
&= \frac{1}{\sum_{i=0}^{k} \frac{T_{\beta,\alpha}^i(1) - T_{\beta,\alpha}^i(0)}{\beta^i}} \sum_{i=0}^{k} \frac{T_{\beta,\alpha}^i(1) - T_{\beta,\alpha}^i(0)}{\beta^i} \cdot \frac{T_{\beta,\alpha}^i(1) + T_{\beta,\alpha}^i(0)}{2} \\
&= \frac{1}{\sum_{i=0}^{k} \lambda_i} \sum_{i=0}^{k} \lambda_i g_i(\alpha),
\end{align*}
which is linear in $\alpha$ on $I_{\beta}(a_1\ldots a_{k+1}, b_1\ldots b_{k+1})$.
\end{proof}

\begin{proof}[Proof of Theorem \ref{thm:linear}]
For all integers  $q \geq 1$ and $m \geq 2$, from the proof of Lemma \ref{le:matching-m-th}, we have $[0,1-\fr {\beta_{q,m}}) \subset I_{{\beta_{q,m}}}(0^m, q^m)$. By Lemma \ref{le:linea-on-matching-interval}, $M_{{\beta_{q,m}}}$ is linear on $[0,1-\fr {\beta_{q,m}})$. 

Observe that $\lambda_0 = 1$, and for $1 \leq i \leq m-1$,
\[
\lambda_i = 1 + \sum_{j=1}^{i} \frac{0 - q}{{\beta_{q,m}}^j} = \sum_{j=1}^{m} \frac{q}{{\beta_{q,m}}^j} - \sum_{j=1}^{i} \frac{q}{{\beta_{q,m}}^j} > 0,
\]
where the second equality holds because ${\beta_{q,m}} = {\beta_{q,m}}_{q,m}$ satisfies $1 = \sum_{j=1}^{m} \frac{q}{{\beta_{q,m}}^j}$. Consequently, the coefficient of $\alpha$ in $M_{\beta_{q,m}}(\alpha)$ is positive, implying that $M_{{\beta_{q,m}}}$ is strictly increasing and linear on $[0,1-\fr {\beta_{q,m}})$.
\end{proof}
Given $q, m \in \mathbb{N}$, recall that $\beta_{q,m}$ is the unique root in $(q, q+1)$ of $1 = \sum_{i=1}^{m} \frac{q}{x^i}$, satisfying
\[
\beta_{q,m}^m - q\beta_{q,m}^{m-1} - q\beta_{q,m}^{m-2} - \cdots - q\beta_{q,m} - q = 0.
\]
Fixed $q \in \mathbb{N}$, the sequence $(\beta_{q,m})_{m \geq 2}$ increases to $q+1$. By Theorem \ref{thm:linear},
\[
M_{\beta_{q,m}}(\alpha) = k_{q,m}\alpha + b_{q,m} \quad \text{for} \quad \alpha \in [0, 1 - \fr{\beta_{q,m}}).
\]
At the end of this section, we establish the strict monotonicity of $(k_{q,m})_m$ and $(b_{q,m})_m$. 
First, we derive explicit formulas for $k_{q,m}$ and $b_{q,m}$.

For $\alpha \in [0, 1 - \fr{\beta_{q,m}})=[0, q+1 - \beta_{q,m})$, the proof of Lemma \ref{le:linea-on-matching-interval} gives
\begin{equation}\label{eq:M-ell-m-f}
M_{\beta_{q,m}}(\alpha) = \frac{1}{\sum_{i=0}^{m-1} \lambda_i} \sum_{i=0}^{m-1} \lambda_i g_i(\alpha),
\end{equation}
where $\lambda_i$ is constant for $\alpha \in [0, q+1 - \beta_{q,m})$ and $0 \leq i \leq m-1$. Denote by
\[
K_{q,m} := K_{\beta_{q,m},\alpha} = \frac{1}{\sum_{i=0}^{m-1} \lambda_i}.
\]
Since $[0, 1 - \fr{\beta_{q,m}}) \subset I_{\beta_{q,m}}(0^m, q^m)$, equation \eqref{eq:la-i} yields
\[
K_{q,m} = \frac{1}{m - \frac{(m-1)q}{\beta_{q,m}} - \frac{(m-2)q}{\beta_{q,m}^2} - \cdots - \frac{q}{\beta_{q,m}^{m-1}}}.
\]
Set $S := \frac{(m-1)q}{\beta_{q,m}} + \frac{(m-2)q}{\beta_{q,m}^2} + \cdots + \frac{q}{\beta_{q,m}^{m-1}}$. Then
\[
(\beta_{q,m} - 1)S = q(m-1) - \frac{q}{\beta_{q,m}} - \cdots - \frac{q}{\beta_{q,m}^{m-1}} = q(m-1) - \frac{q(\beta_{q,m}^{m-1} - 1)}{\beta_{q,m}^{m-1}(\beta_{q,m} - 1)},
\]
implying
\[
S = \frac{q(m-1)}{\beta_{q,m} - 1} - \frac{q(\beta_{q,m}^{m-1} - 1)}{(\beta_{q,m} - 1)^2 \beta_{q,m}^{m-1}}.
\]
Combined with
\[
\beta_{q,m} - q = \sum_{k=1}^{m-1} \frac{q}{\beta_{q,m}^k} = \frac{q(\beta_{q,m}^{m-1} - 1)}{(\beta_{q,m} - 1)\beta_{q,m}^{m-1}},
\]
we obtain
\[
K_{q,m} = \frac{1}{m - \frac{(m-1)q}{\beta_{q,m} - 1} + \frac{\beta_{q,m} - q}{\beta_{q,m} - 1}}.
\]
Using $\beta_{q,m} - 1 = q \left(1 - \frac{1}{\beta_{q,m}^m}\right)$, we simplify
\[
m - \frac{(m-1)q}{\beta_{q,m} - 1} + \frac{\beta_{q,m} - q}{\beta_{q,m} - 1} = \frac{\beta_{q,m} - \frac{mq}{\beta_{q,m}^m}}{\beta_{q,m} - 1},
\]
yielding the closed-form expression
\begin{equation}\label{eq:K-n-m}
K_{q,m} = \frac{\beta_{q,m} - 1}{\beta_{q,m} - \frac{mq}{\beta_{q,m}^m}}.
\end{equation}

Since $[0, 1 - \fr{\beta_{q,m}}) \subset I_{\beta_{q,m}}(0^m, q^m)$, equations \eqref{eq:la-i} and \eqref{eq:g-i} yield
\begin{equation}\label{eq:M-ell-m}
\begin{split}
\sum_{i=0}^{m-1} \lambda_i g_i(\alpha) 
= &\sum_{i=0}^{m-1} \frac{\beta_{q,m}^i \left(1 - \sum_{j=1}^{i} \frac{q}{\beta_{q,m}^j}\right) \left(1 - \sum_{j=1}^{i} \frac{q}{\beta_{q,m}^j} + \sum_{j=1}^{i} \frac{2}{\beta_{q,m}^j} \alpha \right)}{2} \\
= &\sum_{i=1}^{m-1} \left( \sum_{j=1}^i \frac{1}{\beta_{q,m}^j} \right) \left( \sum_{j=1}^{m-i} \frac{q}{\beta_{q,m}^j} \right) \alpha + \frac{1}{2} + \sum_{i=1}^{m-1} \frac{\beta_{q,m}^i \left( \sum_{j=i+1}^{m} \frac{q}{\beta_{q,m}^j} \right)^2}{2},
\end{split}
\end{equation}
where $\sum_{j=1}^{0} \frac{q}{\beta_{q,m}^j} = 0$ by convention, and the term $1/2$ arises from the case $i=0$.

Now we give the formula of $k_{ q ,m}$. From \eqref{eq:M-ell-m-f} and\eqref{eq:M-ell-m}, we expand $k_{ q ,m}$ as
\begin{equation*}
\begin{split}
k_{ q ,m} 
&= K_{ q ,m} \sum_{i=1}^{m-1} \left(\frac{1}{\beta_{ q ,m}} + \cdots + \frac{1}{\beta_{ q ,m}^i}\right)\!\left(\frac{ q }{\beta_{ q ,m}} + \cdots + \frac{ q }{\beta_{ q ,m}^{m-i}}\right) \\
&= \frac{ q  K_{ q ,m}}{(\beta_{ q ,m} - 1)^2} \sum_{i=1}^{m-1} \left(1 - \frac{1}{\beta_{ q ,m}^i} - \frac{1}{\beta_{ q ,m}^{m-i}} + \frac{1}{\beta_{ q ,m}^m}\right) \\
&
=\frac{ q  K_{ q ,m} \left((m-1) + \frac{m+1}{\beta_{ q ,m}^m} - \frac{2}{ q }\right)}{(\beta_{ q ,m} - 1)^2},
\end{split}
\end{equation*}
where the last equal follows from $1=\sum_{i=1}^m\frac{ q }{\beta_{ q ,m}^i}$.
Combining with~\eqref{eq:K-n-m}, we obtain
\begin{equation}\label{eq:k-n-m}
k_{ q ,m} = \frac{ q  \left((m-1) + \frac{m+1}{\beta_{ q ,m}^m} - \frac{2}{ q }\right)}{(\beta_{ q ,m} - 1)\left(\beta_{ q ,m} - \frac{m q }{\beta_{ q ,m}^m}\right)}.
\end{equation}

Analogous derivations for $m+1$ yield that 
\begin{equation}\label{eq:K-k-n-m+1}
\begin{split}
K_{ q ,m+1} 
&= \frac{1}{m+1 - \frac{m q }{\beta_{ q ,m+1}-1} + \frac{\beta_{ q ,m+1}- q }{\beta_{ q ,m+1}-1}} 
= \frac{\beta_{ q ,m+1} - 1}{\beta_{ q ,m+1} - \frac{(m+1) q }{\beta_{ q ,m+1}^{m+1}}}, \\
k_{ q ,m+1} 
&= \frac{ q \left(m + \frac{m+2}{\beta_{ q ,m+1}^{m+1}} - \frac{2}{ q }\right)}{(\beta_{ q ,m+1}-1)\left(\beta_{ q ,m+1} - \frac{(m+1) q }{\beta_{ q ,m+1}^{m+1}}\right)}.
\end{split}
\end{equation}

We now systematically derive the closed-form expression for $d_{ q ,m}$. By  the foundational relations in \eqref{eq:M-ell-m-f} and \eqref{eq:M-ell-m}, we develop the expansion through successive transformations,
\begin{equation*}
\begin{split}
d_{ q ,m}=&
\frac{K_{ q ,m}}{2} \left(1+\sum_{i=1}^{m-1} \beta_{ q ,m}^i\left(\sum_{j=i+1}^{m}\frac{ q }{\beta_{ q ,m}^j }\right)^2\right)\\
=&\frac{K_{ q ,m}}{2} \left(1+\frac{ q ^2}{({\beta_{ q ,m}}-1)^2}\sum_{i=1}^{m-1}\frac{1-\frac{2}{{\beta_{ q ,m}}^i}+\frac{1}{{\beta_{ q ,m}}^{2i}}}{{\beta_{ q ,m}}^{m-i}}\right)\\
=&\frac{K_{ q ,m}}{2} \left(1+\frac{ q ^2}{({\beta_{ q ,m}}-1)^2}\left(\sum_{i=1}^{2m-1}\frac{1}{{\beta_{ q ,m}}^i}
-\frac{2m-1}{{\beta_{ q ,m}}^m}\right)\right)\\
=&\frac{K_{ q ,m}}{2} \left(1+\frac{ q ^2}{({\beta_{ q ,m}}-1)^2}\left(\frac{1}{{\beta_{ q ,m}}-1}
-\frac{1}{{\beta_{ q ,m}}^{2m-1}({\beta_{ q ,m}}-1)}-\frac{2m-1}{{\beta_{ q ,m}}^m}\right)\right)\\
=&\frac{(\beta_{ q ,m}-1)+\frac{ q ^2}{({\beta_{ q ,m}}-1)^2}
-\frac{ q ^2}{{\beta_{ q ,m}}^{2m-1}({\beta_{ q ,m}}-1)^2}-\frac{ q ^2(2m-1)}{{\beta_{ q ,m}}^m}}
{2(\beta_{ q ,m}-\frac{m q }{\beta_{ q ,m}^m})},\\
\end{split}
\end{equation*}
where the final simplification employs the definition of $K_{ q ,m}$ from \eqref{eq:K-n-m}.

To further reduce the expression, we simplify the following term
\begin{equation*}\label{eq:key-simplification}
\frac{ q ^2}{(\beta_{ q ,m}-1)^2}\left(1 - \frac{1}{\beta_{ q ,m}^{2m-1}}\right) = \frac{\beta_{ q ,m}^{2m} - 1}{(\beta_{ q ,m}^{m}-1)^2\beta_{ q ,m}^{2m-1}} = 1 + \frac{2}{\beta_{ q ,m}^{m}-1} - \frac{ q }{\beta_{ q ,m}^{m}(\beta_{ q ,m}^{m}-1)}.
\end{equation*}
Finally, we employ the characteristic equation $\beta_{ q ,m}-1 =  q  - \frac{ q }{\beta_{ q ,m}^{m}}$ to  achieve the critical term simplification,
\begin{align}\label{eq:d-n-m}
d_{ q ,m} &= \frac{ q +1 + \frac{2}{\beta_{ q ,m}^{m}-1} - \frac{ q }{\beta_{ q ,m}^{m}(\beta_{ q ,m}^{m}-1)} - \frac{2 q ^2(m-1) + q }{\beta_{ q ,m}^{m}}}{2\left( q +1 - \frac{ q  (m+1)}{\beta_{ q ,m}^m}\right)} \nonumber \\
&= \frac 12+\frac{ \frac{2}{\beta_{ q ,m}^{m}-1} - \frac{ q }{\beta_{ q ,m}^{m}(\beta_{ q ,m}^{m}-1)} - \frac{2 q ^2(m-1) -  q  m}{\beta_{ q ,m}^{m}}}{2\left( q +1 - \frac{ q  (m+1)}{\beta_{ q ,m}^m}\right)}
\end{align}

The recursive structure becomes apparent when considering the $(m+1)$-dimensional case:
\begin{equation}\label{eq:d-n-m+1}
d_{ q ,m+1} = \frac 12+\frac{\frac{2}{\beta_{ q ,m+1}^{m+1}-1} - \frac{ q }{\beta_{ q ,m+1}^{m+1}(\beta_{ q ,m+1}^{m+1}-1)} - \frac{2 q ^2m- q (m+1)}{\beta_{ q ,m+1}^{m+1}}}{2\left( q +1 - \frac{ q  (m+2)}{\beta_{ q ,m+1}^{m+1}}\right)}.
\end{equation}

Now we establish the relationship between $\beta_{ q ,m}$ and $\beta_{ q ,m+1}$, which is crucial for proving the inequalities $k_{ q ,m+1} > k_{ q ,m}$ and $d_{ q ,m+1} > d_{ q ,m}$.

 \begin{lemma}\label{cl:b-nm-bnm+1}
 For all $ q \geq 1$ and $m\geq 2$, the following inequality holds
 $$\frac{ q }{\beta_{ q ,m+1}^{m}}>\beta_{ q ,m+1}-\beta_{ q ,m}>\frac{ q  (1-\frac{1}{\beta_{ q ,m}})}{\beta_{ q ,m+1}^{m}}.$$
 \end{lemma}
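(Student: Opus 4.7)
The plan is to reduce the problem to elementary manipulations of the identity
\[
\beta_{q,m} + \frac{q}{\beta_{q,m}^m} = q+1,
\]
which is equivalent to $\beta_{q,m}^{m+1} = (q+1)\beta_{q,m}^m - q$ and follows from the defining equation $1 = \sum_{i=1}^m q/\beta_{q,m}^i$. The same identity holds with $m+1$, and subtracting gives the clean formula
\[
\delta := \beta_{q,m+1} - \beta_{q,m} = \frac{q}{\beta_{q,m}^m} - \frac{q}{\beta_{q,m+1}^{m+1}}.
\]
Strict monotonicity $\beta_{q,m+1} > \beta_{q,m}$, used repeatedly below, follows from a direct calculation: evaluating $f_{m+1}(x) = x^{m+1} - q(x^m + \cdots + 1)$ at $\beta_{q,m}$ gives $f_{m+1}(\beta_{q,m}) = -q < 0 = f_{m+1}(\beta_{q,m+1})$, and $f_{m+1}$ is strictly increasing on $(q, q+1)$.

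For the upper bound, I would clear denominators in $\delta < q/\beta_{q,m+1}^m$ and substitute $\beta_{q,m}^m = q/(q+1-\beta_{q,m})$, $\beta_{q,m+1}^{m+1} = q/(q+1-\beta_{q,m+1})$, reducing the inequality to the equivalent form
\[
\beta_{q,m+1}(\beta_{q,m+1} - q) < \beta_{q,m}.
\]
Then I would derive parallel series representations by dividing the defining equations by suitable powers: $\beta_{q,m} = q\sum_{i=0}^{m-1} 1/\beta_{q,m}^i$, and $\beta_{q,m+1}(\beta_{q,m+1} - q) = q\sum_{i=0}^{m-1} 1/\beta_{q,m+1}^i$. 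Subtracting term by term, the $i=0$ contributions cancel while each of the remaining $m-1 \geq 1$ terms is strictly negative because $\beta_{q,m+1} > \beta_{q,m}$, which gives the upper bound.

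For the lower bound $\delta > q(1 - 1/\beta_{q,m})/\beta_{q,m+1}^m$, I would rewrite
\[
\delta = \frac{q(\beta_{q,m+1}^{m+1} - \beta_{q,m}^m)}{\beta_{q,m}^m\, \beta_{q,m+1}^{m+1}}
\]
and apply the crude estimate $\beta_{q,m+1}^{m+1} - \beta_{q,m}^m > \beta_{q,m+1}^{m+1} - \beta_{q,m+1}^m = \beta_{q,m+1}^m(\beta_{q,m+1} - 1)$, which is valid because $\beta_{q,m+1}^m > \beta_{q,m}^m$. This gives the auxiliary inequality $\delta > q(\beta_{q,m+1} - 1)/(\beta_{q,m+1}\,\beta_{q,m}^m)$. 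It remains to check that this intermediate bound dominates the target; cross-multiplying reduces the comparison to $h(\beta_{q,m+1}) > h(\beta_{q,m})$ for $h(x) = x^m - x^{m-1}$. Since $h'(x) = x^{m-2}(mx - m + 1) > 0$ whenever $x > 1$ and both $\beta_{q,m}, \beta_{q,m+1}$ exceed $q \geq 1$, the monotonicity of $h$ on $(1,\infty)$ closes the estimate.

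The principal obstacle is finding the correct algebraic form: the naive estimate $\delta < q/\beta_{q,m}^m$ is too weak for the upper bound (since we need $\beta_{q,m+1}^m$ in the denominator), and a Bernoulli/Taylor expansion of $(\beta_{q,m+1}/\beta_{q,m})^m$ only captures the leading order and misses the sharp constants $1$ and $1 - 1/\beta_{q,m}$. The crucial observation is that the upper bound admits the clean reformulation $\beta_{q,m+1}(\beta_{q,m+1}-q) < \beta_{q,m}$ in which both sides appear as parallel truncated geometric-type sums, after which termwise comparison yields the inequality immediately.
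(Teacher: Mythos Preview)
Your argument is correct. For the upper bound it is essentially the paper's proof in light disguise: the paper writes $\beta_{q,m+1}-\beta_{q,m}=q\sum_{i=0}^{m}\beta_{q,m+1}^{-i}-q\sum_{i=0}^{m-1}\beta_{q,m}^{-i}$ and pairs off terms, which is exactly your termwise comparison after you rearrange to $\beta_{q,m+1}(\beta_{q,m+1}-q)<\beta_{q,m}$.

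For the lower bound, however, your route is genuinely different and considerably cleaner. The paper expands $\beta_{q,m+1}-\beta_{q,m}$ via the same series, extracts the term $q/\beta_{q,m+1}^{m}$, bounds the remaining cross-terms by $\frac{q(\beta_{q,m+1}-\beta_{q,m})}{\beta_{q,m+1}}\sum_{k=1}^{m-1}k\beta_{q,m}^{-k}$, evaluates this weighted sum in closed form, and then simplifies. You instead start from the compact identity $\delta=q\beta_{q,m}^{-m}-q\beta_{q,m+1}^{-(m+1)}$, use the one-line crude bound $\beta_{q,m+1}^{m+1}-\beta_{q,m}^{m}>\beta_{q,m+1}^{m}(\beta_{q,m+1}-1)$, and reduce the remaining comparison to the monotonicity of $h(x)=x^{m}-x^{m-1}$ on $(1,\infty)$. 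Your argument avoids the closed-form evaluation of $\sum k\beta_{q,m}^{-k}$ and the subsequent algebraic simplifications; the paper's approach, on the other hand, keeps the estimate in a form that shows exactly how much slack is being thrown away at each step, which could in principle be sharpened. Either way, both proofs deliver the same inequality, and yours is the more economical of the two.
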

 \begin{proof}
 The left inequality follows directly from the monotonicity $\beta_{ q ,m} < \beta_{ q ,m+1}$ and the definition of $\beta_{ q , m}$. From those, we have 
$$\beta_{ q ,m+1}-\beta_{ q ,m}= q +\frac{ q }{\beta_{ q ,m+1}}+\ldots +\frac{ q }{\beta_{ q ,m+1}^{m}}
-\left( q +\frac{ q }{\beta_{ q ,m}}+\ldots +\frac{ q }{\beta_{ q ,m}^{m-1}}\right)<\frac{ q }{\beta_{ q ,m+1}^{m}}.$$

 For the right inequality, consider the following difference,
\begin{equation}\label{eq:11}
\begin{split}
&\beta_{ q ,m+1}-\beta_{ q ,m}\\
=&\frac{ q }{\beta_{ q ,m+1}^{m}}- q  (\beta_{ q ,m+1}-\beta_{ q ,m})\left(\frac{1}{\beta_{ q ,m+1}\beta_{ q ,m}}
+\frac{\beta_{ q ,m+1}+\beta_{ q ,m}}{\beta_{ q ,m+1}^2\beta_{ q ,m}^2}+\ldots +\right.\\
&\left.\frac{\beta_{ q ,m+1}^{m-1}+\beta_{ q ,m+1}^{m-2}\beta_{ q ,m}+\ldots +
\beta_{ q ,m}^{m-1}}{\beta_{ q ,m+1}^{m-1}\beta_{ q ,m}^{m-1}}\right)\\
>&\frac{ q }{\beta_{ q ,m+1}^{m}}- q  (\beta_{ q ,m+1}-\beta_{ q ,m})\left(\frac{1}{\beta_{ q ,m+1}\beta_{ q ,m}}
+\frac{2\beta_{ q ,m+1}}{\beta_{ q ,m+1}^2\beta_{ q ,m}^2}+\ldots +
\frac{(m-1)\beta_{ q ,m+1}^{m-2}}{\beta_{ q ,m+1}^{m-1}\beta_{ q ,m}^{m-1}}\right)\\
=&\frac{ q }{\beta_{ q ,m+1}^{m}}-\frac{ q  (\beta_{ q ,m+1}-\beta_{ q ,m})}{\beta_{ q ,m+1}}\left(\frac{1}{\beta_{ q ,m}}
+\frac{2}{\beta_{ q ,m}^2}+\ldots +
\frac{m-1}{\beta_{ q ,m}^{m-1}}\right).\\
\end{split}
\end{equation}
To resolve the summation, let $S_1 :=\sum_{k=1}^{m-1} \frac{k}{\beta_{ q ,m}^k}$. Through geometric series manipulation, we have 
\begin{align*}
(\beta_{ q ,m}-1)S_1=1+\frac{1}{\beta_{ q ,m}}+\frac{1}{\beta_{ q ,m}^2}+\ldots +\frac{1}{\beta_{ q ,m}^{m-2}}-\frac{(m-1)}{\beta_{ q ,m}^{m-1}}
=\frac{ \beta_{ q ,m}-\frac{1}{ \beta_{ q ,m}^{m-1}}}{( \beta_{ q ,m}-1)}-\frac{m}{ \beta_{ q ,m}}.
\end{align*}
This yields $S_1 = \frac{\beta_{ q ,m} - \beta_{ q ,m}^{-(m-1)}}{(\beta_{ q ,m} - 1)^2} - \frac{m}{\beta_{ q ,m}^m (\beta_{ q ,m} - 1)}$.
Substituting back into \eqref{eq:11}, we obtain
\begin{equation*}
\beta_{ q ,m+1} - \beta_{ q ,m} > \frac{ q }{\beta_{ q ,m+1}^m} \left[ 1 + \frac{ q }{\beta_{ q ,m+1}} \left( \frac{\beta_{ q ,m} - \beta_{ q ,m}^{-(m-1)}}{(\beta_{ q ,m} - 1)^2} - \frac{m}{\beta_{ q ,m}^m (\beta_{ q ,m} - 1)} \right) \right]^{-1}.
\end{equation*}
Through successive approximations, we conclude that 
\begin{equation}\label{eq:b-m+1-b-m-leq}
\begin{split}
\beta_{ q ,m+1}-\beta_{ q ,m}
>&\frac{ q }{\beta_{ q ,m+1}^{m}\left(1+\frac{ q  (\beta_{ q ,m}^m-1)}{(\beta_{ q ,m}-1)^2\beta_{ q ,m}^m}\right)}
=\frac{ q }{\beta_{ q ,m+1}^{m}\left(1+\frac{ q  (\beta_{ q ,m}-1)(\beta_{ q ,m}^{m-1}+\ldots +1)}{(\beta_{ q ,m}-1)^2\beta_{ q ,m}^m}\right)}\\
=&\frac{ q }{\beta_{ q ,m+1}^{m}}\frac{\beta_{ q ,m}-1}{\beta_{ q ,m}}
=\frac{ q  (1-\frac{1}{\beta_{ q ,m}})}{\beta_{ q ,m+1}^{m}},
\end{split}
\end{equation}
which completes the proof of Lemma \ref{cl:b-nm-bnm+1}.
\end{proof}

\begin{proof}[Proof of Theorem \ref{thm:main-3-increasing}]
Fix $q > 1$. To prove the coefficient sequence $(k_{q,m})$ is increasing, we need to show $k_{q,m+1} > k_{q,m}$. By \eqref{eq:k-n-m} and \eqref{eq:K-k-n-m+1}, this reduces to establishing that
\begin{equation}\label{eq:key-inequality}
\begin{aligned}
&(\beta_{q,m}-1)\left(\beta_{q,m}-\frac{mq}{\beta_{q,m}^m}\right)\left(m + \frac{m+2}{\beta_{q,m+1}^{m+1}} - \frac{2}{q}\right) \\
&\quad - (\beta_{q,m+1}-1)\left(\beta_{q,m+1}-\frac{(m+1)q}{\beta_{q,m+1}^{m+1}}\right)\left((m-1) + \frac{m+1}{\beta_{q,m}^m} - \frac{2}{q}\right) > 0.
\end{aligned}
\end{equation}

Applying Lemma \ref{cl:b-nm-bnm+1}, we transform the left-hand side as follows,
\begin{equation*}
\begin{aligned}
&\frac{(\beta_{q,m}-1)\left(\beta_{q,m}-\dfrac{mq}{\beta_{q,m}^m}\right)}{\beta_{q,m+1}^{m+1}} 
+ \beta_{q,m+1}(\beta_{q,m+1}-1) 
- \frac{q(m+1)(\beta_{q,m+1}-1)}{\beta_{q,m+1}^{m+1}} \\
&\quad - \left(m - \frac{2}{q}\right)(\beta_{q,m+1} - \beta_{q,m})(\beta_{q,m} + \beta_{q,m+1} - 1) - \frac{mq\left(m -\dfrac{2}{q}\right)(\beta_{q,m}-1)}{\beta_{q,m}^m} \\
&\quad
+ \frac{(m+1)q\left(m - \dfrac{2}{q}\right)(\beta_{q,m+1}-1)}{\beta_{q,m+1}^{m+1}}  + \frac{(m+1)(\beta_{q,m}-1)\left(\beta_{q,m}-\dfrac{mq}{\beta_{q,m}^m}\right)}{\beta_{q,m+1}^{m+1}} \\
&\quad - \frac{(m+1)(\beta_{q,m+1}-1)\left(\beta_{q,m+1}-\dfrac{(m+1)q}{\beta_{q,m+1}^{m+1}}\right)}{\beta_{q,m}^m} \\
>\quad  &\beta_{q,m+1}(\beta_{q,m+1}-1) 
- \frac{mq\left(m - \dfrac{2}{q}\right)(\beta_{q,m}-1) + (m+1)\beta_{q,m+1}(\beta_{q,m+1}-1)}{\beta_{q,m}^m} \\
&\quad + \frac{q(m+1)\left(m-1 - \dfrac{2}{q}\right)(\beta_{q,m+1}-1) + (m+2)(\beta_{q,m}-1)\left(\beta_{q,m}-\dfrac{mq}{\beta_{q,m}^m}\right)}{\beta_{q,m+1}^{m+1}} \\
&\quad - \frac{q\beta_{q,m+1}\left(m - \dfrac{2}{q}\right)(\beta_{q,m} + \beta_{q,m+1} - 1)}{\beta_{q,m+1}^{m+1}} \\
>\quad & q^2 - \frac{(m^2 + m + 1)q^2}{\beta_{q,m}^m}.
\end{aligned}
\end{equation*}
It's easy to check the final expression $ q ^2 - \frac{(m^2 + m + 1) q ^2}{\beta_{ q ,m}^m}$ is positive for the following cases:
\begin{itemize}
\item all $q \geq 3$;
\item $q = 2$ with $m \geq 3$;
\item $q = 1$ with $m \geq 6$.
\end{itemize}
For the remaining exceptional cases, the positivity can be verified by direct computation of specific values.

We now prove $(d_{q,m})$ is increasing, which   is equivalent to  $d_{q,m+1} - d_{q,m} > 0$.
By equations \eqref{eq:d-n-m} and \eqref{eq:d-n-m+1}, it reduces to establish that 
\begin{align*}
&\left[ q + 1 - \frac{q(m+1)}{\beta_{q,m}^m} \right]
\left[ \frac{2}{\beta_{q,m+1}^{m+1} - 1} 
- \frac{q}{\beta_{q,m+1}^{m+1} (\beta_{q,m+1}^{m+1} - 1)} 
- \frac{2q^2m - q(m+1)}{\beta_{q,m+1}^{m+1}} \right] \nonumber \\
&\quad - \left[ q + 1 - \frac{q(m+2)}{\beta_{q,m+1}^{m+1}} \right]
\left[ \frac{2}{\beta_{q,m}^m - 1} 
- \frac{q}{\beta_{q,m}^m (\beta_{q,m}^m - 1)} 
- \frac{2q^2(m-1) - qm}{\beta_{q,m}^m} \right] > 0.
\end{align*}

Set \(x := \beta_{q,m}^m\) and \(y := \beta_{q,m+1}^{m+1}\) to simplify notation. 
The above expression transforms to 
\begin{align*}
& \left[  q  + 1 - \frac{ q (m+1)}{x} \right] \left[ \frac{2}{y-1} - \frac{ q }{y(y-1)} - \frac{2 q ^2m -  q (m+1)}{y} \right]-\\
&\left[  q  + 1 - \frac{ q (m+2)}{y} \right] \left[ \frac{2}{x-1} - \frac{ q }{x(x-1)} - \frac{2 q ^2(m-1) -  q  m}{x} \right]\\
=& \frac{2( q +1)}{y-1} 
- \frac{ q ( q +1)}{y(y-1)} 
- \frac{( q +1)(2 q ^2m- q  (m+1))}{y} \\
&\quad - \left[ \frac{2( q +1)}{x-1} 
- \frac{ q ( q +1)}{x(x-1)} 
- \frac{( q +1)(2 q ^2(m-1) -  q  m)}{x} \right] \\
&+ \frac{2 q (m+2)}{y(x-1)} 
- \frac{ q ^2(m+2)}{yx(x-1)}  - \frac{ q (m+2)(2 q ^2(m-1)+ q  m)}{yx} \\
&\quad - \left[
\frac{2 q (m+1)}{x(y-1)} - \frac{ q ^2(m+1)}{xy(y-1)}- \frac{ q (m+1)(2 q ^2m+ q  (m+1))}{yx}\right] \\
=& \frac{( q +1)(x-y)(2xy- q (x+y-1)}{xy(x-1)(y-1)}
- \frac{ q  m( q +1)(2 q -1)(x-y)}{xy} \\ 
&+ \frac{ q ( q +1)}{y} 
- \frac{2 q ^2( q +1)}{x} 
 + \frac{(2- q ) q (m+1)(y-x)}{xy(x-1)(y-1)}  \\
&\quad + \frac{2 q }{y(x-1)} 
- \frac{ q ^2}{yx(x-1)}+\frac{ q ^2(3 q -m q +1)}{xy}.
\end{align*}

For $x$ and $y$, we have the inequality chain $
y - 1 > y - x > q \beta_{q,m+1}^m$. 
We analyze each component of the above target expression by using this chain,
\begin{align*}
 0>\frac{( q +1)(x-y)(2xy- q (x+y-1)}{xy(x-1)(y-1)}& >-\frac{( q +1)(2xy- q (x+y-1)}{xy(x-1)}>-\frac{2( q +1)}{x},\\
- \frac{ q  m( q +1)(2 q -1)(x-y)}{xy} &> \frac{ q  m( q +1)(2 q -1) q  \beta_{ q ,m+1}^m}{xy} > \frac{ q ^2 m(2 q -1)}{x}.\end{align*}
Additionally, direct verification shows that
\begin{align*}
\frac{(2-q)q(m+1)(y-x)}{xy(x-1)(y-1)} + \frac{q^2(3q - mq + 1)}{xy} &> 0 \quad \text{and}\quad 
\frac{2q}{y(x-1)} - \frac{q^2}{yx(x-1)} > 0.
\end{align*} 
Combining these bounds, we obtain the critical lower bound of the target expression, 
\begin{align*}
& \frac{(q+1)(x-y)(2xy - q(x+y-1))}{xy(x-1)(y-1)} 
- \frac{qm(q+1)(2q-1)(x-y)}{xy} \nonumber \\
&\quad + \frac{q(q+1)}{y} 
- \frac{2q^2(q+1)}{x} 
+ \frac{(2-q)q(m+1)(y-x)}{xy(x-1)(y-1)} \nonumber \\
&\quad + \frac{2q}{y(x-1)} 
- \frac{q^2}{yx(x-1)} 
+ \frac{q^2(3q - mq + 1)}{xy} \nonumber \\
>\quad & \frac{q^2 m (2q-1)}{x} - \frac{2(q+1)}{x} - \frac{2q^2(q+1)}{x} > 0
\end{align*}
for all \(m \geq 2\) and \(q \geq 1\). Therefore, \(d_{q,m+1} - d_{q,m} > 0\) holds universally.
\end{proof}

\section{Local linearly of $M_\beta(\alpha)$ on $[1-\fr{\beta},1)$.}\label{sec:matching-2}
This section examines the case $\alpha \in [1 - \langle\beta\rangle, 1)$ for $\beta > \sqrt{2}$. Following the methodology of Section \ref{sec:matching-1}, we focus on multinacci numbers $\beta_{q,m} \in (q, q+1)$ defined as the unique real roots of the polynomials
\begin{equation*}
    P_{q,m}(x) = x^m - q x^{m-1}-\ldots -q
\end{equation*}
for integers $m \geq 2$ and $q \geq 1$. We establish that $M_{\beta_{q,m}}(\alpha)$ exhibits local linearity for Lebesgue almost every $\alpha \in [1 - \langle\beta_{q,m}\rangle, 1)$. This case presents greater analytical complexity than $\alpha \in [0, 1 - \langle\beta_{q,m}\rangle)$.

\begin{theorem}\label{thm:a-e-linear}
For integers $m \geq 2$ and $q \geq 1$, the function $M_{\beta_{q,m}}(\alpha)$ is locally linear for Lebesgue almost every $\alpha \in [1 - \langle\beta_{q,m}\rangle, 1)$.
\end{theorem}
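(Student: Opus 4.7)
The plan is to derive Theorem \ref{thm:a-e-linear} from an almost-sure matching statement together with the linearity criterion in Lemma \ref{le:linea-on-matching-interval}. Concretely, if one proves that the \emph{non-matching set}
\[
\mathcal{N} := \bigl\{\alpha \in [1-\fr{\beta_{q,m}},1) : T_{\beta_{q,m},\alpha}^n(0) \neq T_{\beta_{q,m},\alpha}^n(1) \text{ for all } n \ge 1\bigr\}
\]
has Lebesgue measure zero, then every $\alpha \notin \mathcal{N}$ lies in the interior of some matching interval $I_{\beta_{q,m}}(a_1\ldots a_{k+1}, b_1\ldots b_{k+1})$, on which $M_{\beta_{q,m}}$ is affine by Lemma \ref{le:linea-on-matching-interval}. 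Hence local linearity holds almost everywhere, and Theorem \ref{thm:a-e-linear} follows.

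The combinatorial setup mirrors Section \ref{sec:matching-1}. For each pair of admissible blocks $(u,v)\in\{0,\ldots,q+1\}^k\times\{0,\ldots,q+1\}^k$, define the \emph{cylinder}
\[
J_k(u,v) := \bigl\{\alpha\in[1-\fr{\beta_{q,m}},1) : T_{\beta_{q,m},\alpha}^{i-1}(0) \in \Delta(u_i),\ T_{\beta_{q,m},\alpha}^{i-1}(1) \in \Delta(v_i),\ 1\le i\le k\bigr\},
\]
where $\Delta(\cdot)$ denotes the partition elements in \eqref{eq:def-T-geq-1}. By induction on $k$ the set $J_k(u,v)$ is an interval, since each defining inequality is affine in $\alpha$ on $J_{i-1}(u,v)$. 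Using \eqref{eq:t-0-t-1-matching}, on $J_k(u,v)$ one computes
\[
T_{\beta_{q,m},\alpha}^k(1) - T_{\beta_{q,m},\alpha}^k(0) = \beta_{q,m}^k\Bigl(1 + \sum_{j=1}^k\frac{u_j - v_j}{\beta_{q,m}^j}\Bigr),
\]
which is independent of $\alpha$. Consequently $J_k(u,v)$ is itself a matching interval precisely when the algebraic identity $\sum_{j=1}^k(v_j-u_j)\beta_{q,m}^{-j}=1$ holds, and the defining relation $1=\sum_{j=1}^m q\beta_{q,m}^{-j}$ of the multinacci number is the prototypical such identity: whenever the digit-difference sequence $(v_j-u_j)$ concludes with the run $(q,q,\ldots,q)$ of length $m$, matching is triggered.

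To establish $\lambda(\mathcal{N})=0$ I would set up a renormalization on parameter space, in the spirit of Bruin--Carminati--Kalle \cite{Bruin-Carminati-Kalle-2017} and Bruin--Keszthelyi \cite{Bruin-Keszthelyi-2022}. On every non-matching cylinder $J_k(u,v)$, the affine map $\alpha \mapsto (T_{\beta_{q,m},\alpha}^k(0),T_{\beta_{q,m},\alpha}^k(1))$ conjugates the subsequent dynamics to a rescaled copy of the original system, with rescaling factor a fixed negative power of $\beta_{q,m}$. Inside each such cylinder I would exhibit a sub-cylinder of uniformly positive relative Lebesgue measure on which the next $m$ digit-differences spell out $(q,q,\ldots,q)$, forcing matching via the multinacci identity above. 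Iterating the resulting geometric contraction over all scales yields $\lambda(\mathcal{N})=0$.

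The principal obstacle is the uniform lower bound on the relative measure of these matching-triggering sub-cylinders. In contrast with $[0,1-\fr{\beta_{q,m}})$, where matching holds for every parameter (Lemma \ref{le:matching-m-th}), the additional branch of $T_{\beta_{q,m},\alpha}$ on $[1-\fr{\beta_{q,m}},1)$ admits exceptional itineraries whose digit-differences may in principle avoid the block $(q,\ldots,q)$ forever. Ruling out that these itineraries occupy a positive-measure set of parameters amounts to a quantitative transversality estimate, and the algebraic self-similarity $\beta_{q,m}^m = q(\beta_{q,m}^{m-1} + \cdots + 1)$ --- which already powered the renormalization in Section \ref{sec:matching-1} --- is precisely what makes such an estimate tractable.
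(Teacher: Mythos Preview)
Your reduction to $\lambda(\mathcal N)=0$ via Lemma \ref{le:linea-on-matching-interval} is fine, and the observation that $\delta^k:=T^k_{\beta,\alpha}(1)-T^k_{\beta,\alpha}(0)$ is constant on each cylinder $J_k(u,v)$ is correct. However, the proposed trigger mechanism fails. From $\delta^{i+1}=\beta\,\delta^i-(v_{i+1}-u_{i+1})$ one sees that a run $v_{k+1}-u_{k+1}=\cdots=v_{k+m}-u_{k+m}=q$ gives $\delta^{k+m}=\beta^m\delta^k-q(\beta^{m-1}+\cdots+1)=\beta^m(\delta^k-1)$, so matching at step $k+m$ would force $\delta^k=1$, which is impossible for $k\ge1$. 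The $(q,\ldots,q)$ run is precisely the mechanism on $[0,1-\fr{\beta})$, where $\delta^0=1$ (Lemma \ref{le:matching-m-th}); on $[1-\fr{\beta},1)$ one has $\delta^1=-q/\beta^m$ and no later $\delta^k$ ever equals $1$. The sub-cylinders you propose to exhibit therefore do not exist, and the renormalisation scheme collapses at its first step. The ``principal obstacle'' you flag is thus not merely unproved but unprovable in the form stated; the actual matching identities on this parameter range are far more varied and cannot be captured by a single block.

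The paper proceeds quite differently. Lemma \ref{le:t-1-t-0-finite} shows that $\delta^k$ takes at most $2^{m+1}-3$ values, so its evolution is governed by a finite automaton whose transitions are recorded in \eqref{eq:diff-1}--\eqref{eq:diff-2}. Proposition \ref{cl:matching-P} then identifies the fixed point $P=(q-\alpha)/(\beta-1)\in\Delta(q)$ of $T_{\beta,\alpha}$ as a trap: once $T^k_{\beta,\alpha}(0)=P$ the automaton forces $\delta^{k'}=0$ within boundedly many further steps (for $q\ge2$; for $q=1$ a countable set of $\alpha$'s with eventually periodic codings must be excluded). The almost-sure input is Theorem~3.1 of \cite{Bruin-Keszthelyi-2022}, which gives density of $\{T^k_{\beta,\alpha}(0)\}$ in $[0,1)$ for Lebesgue-a.e.\ $\alpha$; a short continuity argument then shows that landing in a one-sided $\varepsilon$-neighbourhood of $P$ already reproduces the same finite combinatorics and hence matching. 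No uniform relative-measure estimate on parameter cylinders is needed.
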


To prove Theorem \ref{thm:a-e-linear}, we introduce essential lemmas and notation. Recall from \eqref{eq:def-T-geq-1} that for $\alpha \in [1 - \langle\beta\rangle, 1)$, the transformation $T_{\beta,\alpha}$ has exactly $\lfloor \beta \rfloor + 2$ branches corresponding to digits $\{0, 1, \ldots, \lfloor \beta \rfloor + 1\}$. The fundamental digit partition of $[0,1]$ is given by
\begin{align*}
    \Delta(0) &:= \left[0, \frac{1 - \alpha}{\beta}\right), \quad 
    \Delta(\lfloor \beta \rfloor + 1) := \left[\frac{\lfloor \beta \rfloor + 1 - \alpha}{\beta}, 1\right], \\
    \Delta(i) &:= \left[\frac{i - \alpha}{\beta}, \frac{i + 1 - \alpha}{\beta}\right) \quad \text{for } 1 \leq i \leq \lfloor \beta \rfloor.
\end{align*}
Each subinterval $\Delta(j)$ consists of points $x \in [0,1)$ where digit $j$ is selected in the next iteration of $T_{\beta,\alpha}$.

The following lemma establishes a finite-variation property for critical orbit differences under $T_{\beta,\alpha}$ at multinacci parameters. For notational clarity in subsequent analysis, we denote by
\begin{equation}\label{eq:def-delta-diff}
    \delta^k(\beta,\alpha) := T_{\beta,\alpha}^k(1) - T_{\beta,\alpha}^k(0) \quad \text{for } k \geq 1.
\end{equation}
\begin{lemma}\label{le:t-1-t-0-finite}
For all integers $m \geq 2$, $q \geq 1$, and every $\alpha \in[1 - \langle\beta_{q,m}\rangle, 1)$, the critical orbit differences exhibit the following algebraic structure,
\begin{equation}\label{eq:delta-decomposition}
\delta^k(\beta_{q,m},\alpha) = \pm\left(\frac{e_1^k(\beta_{q,m},\alpha)}{\beta_{q,m}} + \frac{e_2^k(\beta_{q,m},\alpha)}{\beta_{q,m}^2} + \cdots + \frac{e_m^k(\beta_{q,m},\alpha)}{\beta_{q,m}^m}\right) \quad \forall~ k \in \mathbb{N},
\end{equation}
where coefficients satisfy $e_j^k(\beta_{q,m},\alpha) \in \{0, q\}$ and $e_1^k(\beta_{q,m},\alpha), \ldots, e_m^k(\beta_{q,m},\alpha) \neq q^m$. Moreover, the cardinality of the differences sequence satisfies that 
\begin{equation*}
    \#\left\{ \delta^k(\beta_{q,m},\alpha) : k \in \mathbb{N},\ \alpha \in [1 - \langle\beta_{q,m}\rangle, 1)\right\} \leq 2^{m+1} - 3.
\end{equation*}
\end{lemma}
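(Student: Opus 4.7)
My plan is to prove the lemma by induction on $k$, using the recursion
\begin{equation*}
\delta^{k+1}(\beta_{q,m},\alpha)=\beta_{q,m}\,\delta^k(\beta_{q,m},\alpha)+(a_{k+1}-b_{k+1}),
\end{equation*}
which follows immediately from \eqref{eq:digit-T-beta-alpha}, where $(a_i)$ and $(b_i)$ are the intermediate $\beta_{q,m}$-digits of $0$ and $1$. The whole argument is glued together by the defining relation $1=\sum_{j=1}^{m}q/\beta_{q,m}^{j}$ of the multinacci number. For the base case, a direct computation from \eqref{eq:def-T-geq-1} gives $T_{\beta_{q,m},\alpha}(0)=\alpha$ and $T_{\beta_{q,m},\alpha}(1)=\alpha-(1-\fr{\beta_{q,m}})$, so $\delta^1=\beta_{q,m}-q-1=-q/\beta_{q,m}^{m}$; this matches the claimed form with negative sign and tuple $(0,\ldots,0,q)\neq(q,\ldots,q)$.

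For the inductive step I multiply by $\beta_{q,m}$ to get $\beta_{q,m}\delta^k=\varepsilon(e_1^k+e_2^k/\beta_{q,m}+\cdots+e_m^k/\beta_{q,m}^{m-1})$. A quick check on the range $[-1,1]$ into which $\delta^{k+1}$ must fall shows that only two values of $N:=a_{k+1}-b_{k+1}$ are ever compatible with the dynamics. The first value (the \emph{shift}) exactly cancels the integer $\varepsilon e_1^k$ and yields $\delta^{k+1}=\varepsilon\sum_{j=1}^{m-1}e_{j+1}^k/\beta_{q,m}^{j}$, producing the new tuple $(e_2^k,\ldots,e_m^k,0)$ with sign preserved, which automatically avoids $(q,\ldots,q)$ since its last entry is $0$. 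The second value (the \emph{borrow}) leaves an excess $\mp 1$, which is absorbed via $\mp 1=\mp\sum_{j=1}^{m}q/\beta_{q,m}^{j}$ to give
\begin{equation*}
\delta^{k+1}=-\varepsilon\left(\sum_{j=1}^{m-1}\frac{q-e_{j+1}^k}{\beta_{q,m}^{j}}+\frac{q}{\beta_{q,m}^{m}}\right),
\end{equation*}
so the new tuple $(q-e_2^k,\ldots,q-e_m^k,q)$ has entries in $\{0,q\}$ and the sign flips.

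The main obstacle is ruling out the single borrow transition that would land on the forbidden tuple $(q,\ldots,q)$, which occurs iff $e_2^k=\cdots=e_m^k=0$, i.e.\ $\delta^k=\pm q/\beta_{q,m}$. In that scenario the borrow would force $\delta^{k+1}=\mp 1$, requiring $T_{\beta_{q,m},\alpha}^{k+1}(0)=1$ or $T_{\beta_{q,m},\alpha}^{k+1}(1)=1$; but inspection of the branch formulas in \eqref{eq:def-T-geq-1} shows that for every $\alpha\in[1-\fr{\beta_{q,m}},1)$ each branch of $T_{\beta_{q,m},\alpha}$ maps $[0,1]$ strictly into $[0,1)$, so the value $1$ is never attained by any iterate. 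Hence the dynamics is forced to take the shift value at the critical state, closing the induction. Finally, for the cardinality bound, the set of admissible encodings $\{+,-\}\times(\{0,q\}^{m}\setminus\{(q,\ldots,q)\})$ has size $2(2^{m}-1)$; the only two encodings that represent the same value of $\delta^k$ are $(\pm,(0,\ldots,0))$, both representing the matched value $\delta=0$, and removing this one duplicate yields the stated bound $2^{m+1}-3$.
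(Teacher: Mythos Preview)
Your proof is correct and follows essentially the same inductive approach as the paper: both establish the base case $\delta^1=-q/\beta_{q,m}^m$ directly, and both identify the two possible transitions (your ``shift'' and ``borrow'') that arise from the recursion $\delta^{k+1}=\beta_{q,m}\delta^k+(a_{k+1}-b_{k+1})$, with the forbidden tuple $(q,\ldots,q)$ excluded because $|\delta^{k+1}|<1$. The paper organizes the case split by the position of $T_{\beta_{q,m},\alpha}^{k-1}(1)$ in the digit partition $\Delta(\cdot)$ rather than abstractly via the range constraint on $N$, but the content is the same, and your explicit justification that iterates never attain the value $1$ is in fact the substance behind the paper's one-line assertion that $|\delta^k|<1$.
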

\begin{proof}
The bound $|\delta^k(\beta_{q,m},\alpha)| < 1$ combined with the identity $\pm \left( \frac{q}{\beta_{q,m}} + \cdots + \frac{q}{\beta_{q,m}^m} \right) = \pm 1$ ensures $(e_1^k(\beta_{q,m},\alpha), \ldots, e_m^k(\beta_{q,m},\alpha)) \neq (q, \ldots, q)$ for all $k \geq 1$. 

The proof is done by induction on $k$. 
For the base case $k=1$, direct computation yields 
\begin{equation*}
    \delta^1(\beta_{q,m},\alpha) = \beta_{q,m} + \alpha - (q + 1) - \alpha = -\frac{q}{\beta_{q,m}^m}.
\end{equation*}
Assume the inductive hypothesis holds for $k-1 \geq 1$, i.e.,
\begin{equation*}
    \delta^{k-1}(\beta_{q,m},\alpha) = \pm\left(\frac{e_1^{k-1}(\beta_{q,m},\alpha)}{\beta_{q,m}} + \cdots + \frac{e_m^{k-1}(\beta_{q,m},\alpha)}{\beta_{q,m}^m}\right)
\end{equation*}
with $e_j^{k-1}(\beta_{q,m},\alpha) \in \{0, q\}$ and $(e_1^{k-1}(\beta_{q,m},\alpha), \ldots, e_m^{k-1}(\beta_{q,m},\alpha)) \neq (q, \ldots, q)$. 

When $(e_1^{k-1}(\beta_{q,m},\alpha), \ldots, e_m^{k-1}(\beta_{q,m},\alpha)) = (0, \ldots, 0)$, we have $\delta^k(\beta_{q,m},\alpha) = 0$. Otherwise, assume $\delta^{k-1}(\beta_{q,m},\alpha) > 0$ (the $\delta^{k-1} (\beta_{q,m},\alpha)< 0$ case follows by similar way). Denote $\ell \in \{0, 1, \ldots, q+1\}$ by the unique integer satisfying $T_{\beta_{q,m},\alpha}^{k-1}(0) \in \Delta(\ell)$. We analyze two cases of  the orbit differences based on the leading coefficient.

\textbf{Case 1:} $e_1^{k-1}(\beta_{q,m},\alpha)=0$ with $(e_1^{k-1}(\beta_{q,m},\alpha) \cdots e_m^{k-1}(\beta_{q,m},\alpha))  \neq 0^m$. Note that $$0 < \delta^{k-1}(\beta_{q,m},\alpha) \leq \sum_{j=2}^m \frac{q}{\beta_{q,m}^j} < \frac{1}{\beta_{q,m}}.$$ So,  $T_{\beta_{q,m},\alpha}^{k-1}(1)$ lies in either $\Delta(\ell)$ or $\Delta(\ell+1)$. 
 If $T_{\beta_{q,m},\alpha}^{k-1}(1) \in \Delta(\ell )$, we have 
    \begin{align*}
        \delta^k(\beta_{q,m},\alpha) &= \beta_{q,m}\delta^{k-1}(\beta_{q,m},\alpha) \\
        &= \frac{e_2^{k-1}(\beta_{q,m},\alpha)}{\beta_{q,m}} + \frac{e_3^{k-1}(\beta_{q,m},\alpha)}{\beta_{q,m}^2} + \cdots + \frac{e_m^{k-1}(\beta_{q,m},\alpha)}{\beta_{q,m}^{m-1}} \quad (\geq 0).
    \end{align*}
     Otherwise,  $T_{\beta_{q,m},\alpha}^{k-1}(1) \in \Delta(\ell +1)$ implies that 
    \begin{align*}
        \delta^k(\beta_{q,m},\alpha) &=\frac{e_2^{k-1}(\beta_{q,m},\alpha)}{\beta_{ q ,m}}+\ldots +\frac{e_m^{k-1}(\beta_{q,m},\alpha)}{\beta_{ q ,m}^{m-1}}-1\\
        &= -\left(\frac{q - e_2^{k-1}(\beta_{q,m},\alpha)}{\beta_{q,m}} + \cdots + \frac{q - e_m^{k-1}(\beta_{q,m},\alpha)}{\beta_{q,m}^{m-1}} + \frac{q}{\beta_{q,m}^m}\right) \quad (\leq 0).
    \end{align*}

\textbf{Case 2:} $e_1^{k-1}(\beta_{q,m},\alpha)=q$ with $e_1^{k-1}(\beta_{q,m},\alpha) \cdots e_m^{k-1}(\beta_{q,m},\alpha) \neq q^m$.
From the inequality $\frac{q}{\beta_{q,m}} < \delta^{k-1}(\beta_{q,m},\alpha) < 1$, we have $T_{\beta_{q,m},\alpha}^{k-1}(1)$ lies in either $\Delta(\ell + q)$ (requiring $\ell \leq 1$) or $\Delta(\ell + q + 1)$ (requiring $\ell = 0$). 
 If $T_{\beta_{q,m},\alpha}^{k-1}(1) \in \Delta(\ell +q)$, by simply computation, we have  
    \begin{align*}
        \delta^k(\beta_{q,m},\alpha) &= \frac{e_2^{k-1}(\beta_{q,m},\alpha)}{\beta_{q,m}} + \cdots + \frac{e_m^{k-1}(\beta_{q,m},\alpha)}{\beta_{q,m}^{m-1}} \quad (\geq 0).
    \end{align*}
However, if $T_{\beta_{q,m},\alpha}^{k-1}(1) \in \Delta(\ell +q+1)$, we have 
    \begin{align*}
        \delta^k(\beta_{q,m},\alpha) &= -\left(\frac{q - e_2^{k-1}(\beta_{q,m},\alpha)}{\beta_{q,m}} + \cdots + \frac{q - e_m^{k-1}(\beta_{q,m},\alpha)}{\beta_{q,m}^{m-1}} + \frac{q}{\beta_{q,m}^m}\right) \quad (\leq 0).
    \end{align*}

In all cases, the form \eqref{eq:delta-decomposition} holds for $\delta^k(\beta_{q,m},\alpha)$. The cardinality bound $2^{m+1} - 3$ follows from the binary choices (sign and coefficients) while excluding the forbidden states $(q, \ldots, q)$ and $(0, \ldots, 0)=-(0, \ldots, 0)$.
\end{proof}

Fix the parameter pair $(\beta_{q,m},\alpha)$.
To simplify notation, for all $k\geq 1$, we express the orbit difference as
\begin{equation}\label{eq:sim-de}
\begin{split}
\delta^k(\beta_{q,m},\alpha) = &\pm\left(\frac{e_1^k(\beta_{q,m},\alpha)}{\beta_{q,m}} + \frac{e_2^k(\beta_{q,m},\alpha)}{\beta_{q,m}^2} + \cdots + \frac{e_m^k(\beta_{q,m},\alpha)}{\beta_{q,m}^m}\right)\\
 =:& \pm e_1^k(\beta_{q,m},\alpha)\cdots e_m^k(\beta_{q,m},\alpha).
 \end{split}\end{equation}
When no ambiguity arises, we may omit the superscript $(\beta_{q,m},\alpha)$.

From Lemma \ref{le:t-1-t-0-finite}, we derive the following evolution rules.
For $\delta^k (\beta_{q,m},\alpha)= \pm 0e^k_2\cdots e^k_m$ with $T_{\beta_{q,m},\alpha}^k(0) \in \Delta(\ell)$ for some $\ell\in\{0,1,\ldots,q+1\}$,
\begin{equation}\label{eq:diff-1}
\delta^{k+1}(\beta_{q,m},\alpha) = \begin{cases}
\pm e^k_2\cdots e^k_m 0 & \text{if } T_{\beta_{q,m},\alpha}^k(1) \in \Delta(\ell), \\
\mp (q-e^k_2)\cdots (q-e^k_m)q & \text{if } T_{\beta_{q,m},\alpha}^k(1) \in \Delta(\ell \pm 1).
\end{cases}
\end{equation}
For  $\delta^k (\beta_{q,m},\alpha)= \pm q e^k_2\cdots e^k_m$ with $T_{\beta_{q,m},\alpha}^k(0) \in \Delta(\ell)$ for some $\ell\in\{0,1,\ldots,q+1\}$,
\begin{equation}\label{eq:diff-2}
\delta^{k+1}(\beta_{q,m},\alpha) = \begin{cases}
\pm e^k_2\cdots e^k_m 0 & \text{if } T_{\beta_{q,m},\alpha}^k(1) \in \Delta(\ell \pm q), \\
\mp (q-e^k_2)\cdots (q-e^k_m)q & \text{if } T_{\beta_{q,m},\alpha}^k(1) \in \Delta(\ell \pm(q+1)).
\end{cases}
\end{equation}
The sign of $\delta^{k+1}$ remains unchanged when both orbits stay in $\Delta(\ell)$ or shift by $\pm q$ intervals, but reverses for $\pm 1$ or $\pm (q+1)$ interval shifts.

We now characterize matching propagation.
When $\delta^{k} (\beta_{q,m},\alpha)= \pm e^k_1\cdots e^k_{m}$ with $e^k_{m}=q$, the first matching occurs at step $k+m$ ($\delta^{k+m}(\beta_{q,m},\alpha)=0$) if and only if  subsequent differences maintain sign consistency, i.e., 
$$\delta^{k+i}(\beta_{q,m},\alpha)=\pm e^k_{i+1}\ldots e^k_{m}0^i \quad \text{for all } 1\leq i\leq m-1.$$
More generally, for $\delta^{k} (\beta_{q,m},\alpha) = \pm e^k_1\cdots e^k_i 0^{m-i}$ with $e^k_i = q$, matching occurs at step $k+i$ if and only if $\delta^{k+1}(\beta_{q,m},\alpha),\ldots,\delta^{k+i}(\beta_{q,m},\alpha)$ share the same sign.
Then we can deduce that no matching occurs before step $m+1$ due to the initial difference structure
$$
\delta^1(\beta_{q,m},\alpha) = -\frac{q}{\beta_{q,m}^m} \quad \forall~ \alpha \in [q+1-\beta_{q,m}, 1)
$$
and the first potential matching arises at step $m+1$.

Following the approach in \cite{Bruin-Keszthelyi-2022}, we prove Theorem \ref{thm:a-e-linear} by establishing the existence of an interval $I \subset [q+1 - \beta_{q,m}, 1)$ such that whenever $T_{\beta_{q,m},\alpha}^k(0) \in I$ for some $k \in \mathbb{N}$, matching occurs within finit time $k' \geq k$ for all $\alpha \in [q+1 - \beta_{q,m}, 1)$. This constitutes the core technical result.
\begin{proposition}\label{cl:matching-P}
For all integers $q \geq 1$ and $m \geq 2$, let $\alpha \in [q+1-{\beta_{q,m}}, 1)$. Denote by
\[
P :=P(\beta_{q,m},\alpha)= \frac{q - \alpha}{{\beta_{q,m}} - 1}.
\]
Suppose there exists a minimal $k \in \mathbb{N}$ such that $T_{{\beta_{q,m}},\alpha}^k(0) = P$. Then,
\begin{itemize}
  \item[(i)] For $q \geq 2$, matching occurs at some finite time $k' \geq k$.
  \item[(ii)] For $q = 1$, matching occurs at some finite time $k' \geq k$ except at most countably many $\alpha \in [2-{\beta_{1,m}}, 1)$.
\end{itemize}
\end{proposition}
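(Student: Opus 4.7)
\noindent\emph{Proof plan.} The strategy is to interpret matching via the dynamics of the difference $\delta^j := T_{\beta_{q,m},\alpha}^j(1) - P$. I would first verify that $P$ is a genuine fixed point of $T_{\beta_{q,m},\alpha}$ lying in the branch $\Delta(q)$: a direct computation gives $\beta_{q,m} P + \alpha = q + P$, and the inequality $\alpha > 1-\fr{\beta_{q,m}}$ places $P$ in the interior of $\Delta(q)$, so $T_{\beta_{q,m},\alpha}(P) = P$ (the boundary value $\alpha = 1-\fr{\beta_{q,m}}$, where $P = 1$, is degenerate and can be treated separately). Consequently, once $T_{\beta_{q,m},\alpha}^k(0) = P$, the orbit of $0$ is stationary at $P$ thereafter, and matching at some $k' \geq k$ becomes equivalent to $\delta^{k'} = 0$.

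Next I would invoke Lemma \ref{le:t-1-t-0-finite} to conclude that $\{\delta^j : j \geq k\}$ is contained in a finite set of cardinality at most $2^{m+1}-3$, so by the pigeonhole principle the sequence $(\delta^j)_{j\geq k}$ is eventually periodic. Suppose, for contradiction (in case (i)), or to characterize the exceptions (in case (ii)), that $\delta^j \neq 0$ for every $j \geq k$. Then there exist $i \geq k$ and $p \geq 1$ such that the cycle $\delta^i, \ldots, \delta^{i+p-1}$ avoids $0$. Because $T_{\beta_{q,m},\alpha}(P) = P$, the recursion takes the form $\delta^{j+1} = \beta_{q,m}\,\delta^j + (q - d_j)$, where $d_j \in \{0, 1, \ldots, q+1\}$ is the digit selected at $T_{\beta_{q,m},\alpha}^j(1)$. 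Iterating $p$ times and imposing $\delta^{i+p} = \delta^i$ produces the key algebraic identity
\[
(\beta_{q,m}^p - 1)\,\delta^i \;=\; \sum_{l=0}^{p-1} \beta_{q,m}^{p-1-l}\,(q - d_{i+l}).
\]

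For case (i) with $q \geq 2$, I would combine this identity with the admissibility rules \eqref{eq:diff-1}--\eqref{eq:diff-2}, which specify the permissible shifts of the sign $\pm$ and the digit blocks $e_1^j \cdots e_m^j$, together with the defining relation $\beta_{q,m}^m = q(\beta_{q,m}^{m-1}+\cdots+1)$. A case analysis on the leading entry $e_1^j \in \{0,q\}$ and the sign of $\delta^j$ should show that any non-trivial cyclic pattern would force $\delta^i$ to equal two distinct elements of $\mathbb{Z}[\beta_{q,m}]$; the gap between admissible digit values when $q \geq 2$ turns this into a genuine contradiction, so the only possible cycle is $\{0\}$. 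For case (ii) with $q = 1$, the complement $q - e_i^j = 1 - e_i^j$ introduces a binary symmetry that blocks the same exclusion, but only finitely many candidate cyclic patterns exist (bounded by the finite state space of Lemma \ref{le:t-1-t-0-finite}). Substituting $P = (1-\alpha)/(\beta_{1,m}-1)$ and the explicit expression for $T_{\beta_{1,m},\alpha}^i(1)$ into the cycle identity converts each candidate pattern into a nontrivial polynomial equation in $\alpha$ with coefficients in $\mathbb{Z}[\beta_{1,m}]$, whose roots form a finite set; their countable union yields the claimed exceptional set.

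The principal obstacle is the combinatorial classification in case (i): ruling out every non-trivial cycle of the finite-state system generated by \eqref{eq:diff-1}--\eqref{eq:diff-2} when $q \geq 2$. This is the technical heart of the proof and will require careful bookkeeping of sign reversals along the cycle, induction on the period $p$, and decisive use of the specific arithmetic of $\beta_{q,m}$ to force the required contradiction.
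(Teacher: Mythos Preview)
Your reduction is correct and matches the paper's starting point: once $T_{\beta_{q,m},\alpha}^k(0)=P$ the orbit of $0$ is stationary, matching is equivalent to $\delta^j:=T_{\beta_{q,m},\alpha}^j(1)-P$ hitting $0$, and Lemma~\ref{le:t-1-t-0-finite} confines $(\delta^j)_{j\ge k}$ to a finite set so the sequence is eventually periodic. Your treatment of case~(ii) is sound in outline and close to the paper's: each non-trivial cycle forces eventually periodic codings of both $0$ and $1$, and (as in Claim~\ref{cl:N_k-N}) a fixed pair of such codings pins down at most one $\alpha$, so the exceptional set is countable.

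The gap is in case~(i). Your cycle identity $(\beta_{q,m}^p-1)\delta^i=\sum_{l=0}^{p-1}\beta_{q,m}^{p-1-l}(q-d_{i+l})$ determines $\delta^i$ \emph{uniquely} from the digit pattern $(d_{i+l})$, so there is no obvious way it can force $\delta^i$ to equal ``two distinct elements of $\mathbb{Z}[\beta_{q,m}]$''. The only possible obstruction is that the resulting $\delta^i$ either fails to have the form of Lemma~\ref{le:t-1-t-0-finite} or is incompatible with the branch choices $d_{i+l}$; but the latter compatibility depends on the location of $P+\delta^{i+l}$ relative to the branch endpoints $(j-\alpha)/\beta_{q,m}$, which vary with $\alpha$. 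Ruling out every non-trivial cycle \emph{uniformly in $\alpha$} therefore requires more than the algebraic identity, and your appeal to ``the gap between admissible digit values when $q\ge2$'' is too vague to carry this.

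The paper proceeds quite differently for (i): instead of excluding cycles abstractly, it tracks $\delta^j$ through an explicit case analysis (Cases~1--4 according to the sign and size of $\delta^k$). The decisive step writes $-\delta^k$ as a sum $\sum q/\beta_{q,m}^{l_r}$ with a block structure indexed by $l_1<l_2\le l_3<\cdots$, and shows that after $m$ iterations the number of blocks strictly decreases. The place where $q\ge2$ enters is inequality~\eqref{eq:case-3}, which forces the threshold $b^*$ in \eqref{eq:bdd} to satisfy $b^*\ge 0q^20^{m-3}$; this is exactly what drives the decrease and yields matching within $k+m(m-1)$ steps. For $q=1$ that lower bound on $b^*$ fails, and the same tracking produces the explicit periodic patterns that give the countable exceptional set. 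If you want to salvage the cycle-exclusion approach for $q\ge2$, you would need to show that the non-deterministic automaton defined by \eqref{eq:diff-1}--\eqref{eq:diff-2} has no cycle avoiding $0$; this may be true, but it is not shorter than the paper's argument and still hinges on the same arithmetic fact about $b^*$.
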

\begin{proof}
Let $k $ be the minimal integer with $T_{{\beta_{q,m}},\alpha}^k (0) = P$. Since $P$ is a fixed point satisfying $T_{{\beta_{q,m}},\alpha}(P) = P$, we denote the discrepancy by $\delta^i := T_{{\beta_{q,m}},\alpha}^i(1) - T_{{\beta_{q,m}},\alpha}^i(0)=T_{{\beta_{q,m}},\alpha}^i(1) - P$ for all $i\geq k $. If $\delta^k  = 0$, matching occurs immediately at $k' = k $. Otherwise, Lemma \ref{le:t-1-t-0-finite} implies $|\delta^k | \geq \frac{q}{{\beta_{q,m}}^m} > 0$.

For $\alpha \in [\frac{q}{\beta_{q,m}^m}, \frac{q}{\beta_{q,m}^{m-1}})$, we have that $P + \frac{q}{\beta_{q,m}^m}> 1 $. The orbit position implies 
$$T_{\beta_{q,m},\alpha}^{k+i}(1)\leq  P = T_{\beta_{q,m},\alpha}^{k+i}(0) \quad \text{ for all } i\geq 0.$$
Then matching propagation rules \eqref{eq:diff-1} and \eqref{eq:diff-2} ensure matching time $k'\leq k + m$. 

For $\alpha \geq \frac{q}{\beta_{q,m}^{m-1}}$, the symmetry property established in Lemma \ref{le:isomorphic-alpha-alpha} allows restriction to $\alpha \in \left[\frac{q}{\beta_{q,m}^{m-1}}, \frac{q+2-\beta_{q,m}}{2}\right)$. Note that $$\frac{q}{\beta_{q,m}^{m-1}} > \frac{q+2-\beta_{q,m}}{2}$$ for $m=2$, $q\geq 1$. Then we only need to consider  $m \geq 3$.
Without loss of generality, assume $\delta^k < 0$. Otherwise, consider the smallest $j \in \{k+1,\dots,k+m-1\}$ with $\delta^j < 0$. If $\delta^{k+i}\geq  0$ for all $1 \leq i \leq m-1$, then matching occurs by step $k'\leq k+m$.
The analysis is divided into the following four cases.\\
\textbf{Case 1:} $\delta^k = -\frac{q}{\beta_{q,m}}$. Direct calculation shows that matching occurring at step $k+1$.\\
\textbf{Case 2:} $\delta^k < -\frac{q}{\beta_{q,m}}$. 
The bounding inequality
\begin{equation*}
    \frac{q - \alpha}{\beta_{q,m}} < P=T_{\beta_{q,m},\alpha}^k(0) < \frac{q + 1 - \alpha}{\beta_{q,m}}
\end{equation*} estimates that 
\begin{equation*}
    T_{\beta_{q,m},\alpha}^k(1)=T_{\beta_{q,m},\alpha}^k(0) +\delta_{k} < P - \frac{q}{\beta_{q,m}} < \frac{1 - \alpha}{\beta_{q,m}}.
\end{equation*}
The recurrence relation yields
\begin{equation}
    T_{\beta_{q,m},\alpha}^{k+1}(1) < T_{\beta_{q,m},\alpha}^{k+1}(0) = P\quad \text{and}\quad  \delta^{k+1} = \beta_{q,m}\delta^k + q < 0.
\end{equation}
\noindent
Through an analysis of consecutive iterations, the following dynamical principles emerge.
 If the inequality $\delta^{k+i} \leq  -\frac{q}{\beta_{q,m}}$ persists for all indices $0 \leq i \leq j\leq m-1$  and  $\delta^{k+j} = -\frac{q}{\beta_{q,m}}$, triggering matching at step $k+j+1$. Conversely, failure of this persistent inequality implies the existence of at least one iteration $2\leq j\leq m-1$ satisfying $0 > \delta^{k+i} > -\frac{q}{\beta_{q,m}}$.

The residual scenario corresponds to $0>\delta^k > -\frac{q}{\beta_{q,m}}$. By Lemma \ref{le:t-1-t-0-finite} we have $$\delta^k = -\left(\frac{e^k_2}{\beta_{q,m}^2}+ \cdots+\frac{ e^k_m }{\beta_{q,m}^m}\right)>-\frac{1}{\beta_{q,m}}\geq -\frac{q}{\beta_{q,m}}$$
where $e^k_2\ldots e^k_m\in \{0, q\}^{m-1} \setminus \{0^{m-1}\}.$
Direct inspection reveals the following two critical criteria (see figure \ref{f:P-1}),
\begin{equation}\label{eq:delta-k+1}
\begin{cases}
\delta^{k+1}=-\left(\frac{e^k_2}{\beta_{q,m}}+ \cdots+\frac{ e^k_m }{\beta_{q,m}^{m-1}}\right) < 0  & \text{if}\quad  \frac{q-\alpha}{\beta_{q,m}} \leq T_{\beta_{q,m},\alpha}^k(1) < P, \\
\delta^{k+1}=\frac{q-e^k_2}{\beta_{q,m}}+ \cdots+\frac{ q-e^k_m }{\beta_{q,m}^{m-1}}+\frac{q}{\beta_{q,m}^m} > 0 & \text{if} \quad   P - \frac{1}{\beta_{q,m}}<T_{\beta_{q,m},\alpha}^k(1)<\frac{q-\alpha}{\beta_{q,m}}.
\end{cases}
\end{equation}
\begin{figure}[h]
\begin{tikzpicture}[scale=4.1]
\draw(-.01,0)node[below]{\footnotesize $0$}--(5/9-2/9,0)node[below]{\tiny $\frac{1-\alpha}{\beta}$}--(10/9-2/9,0)node[below]{\tiny $\frac{2-\alpha}{\beta}$}--(1,0);
\draw[dotted](3/4,0)node[below]{\tiny $P$}--(3/4,3/4)--(3/4-5/9,3/4)--(3/4-5/9,0)node[below]{\tiny $P$-$\frac{1}{\beta}$};
\draw(0,-.01)--(0,1)node[left]{\footnotesize $1$};
\draw[thick, green!60!black] 
(0,0.4)--(5/9-2/9,1)(5/9-2/9,0)--(10/9-2/9,1)(10/9-2/9,0)--(1,0.2);
\draw[dotted](0,1)--(1,1)--(1,0)(5/9-2/9,0)--(5/9-2/9,1)(10/9-2/9,0)--(10/9-2/9,1);
\draw[dotted](0,0)--(1,1);
\end{tikzpicture}
{\caption{Fix $\beta=1.8$ and $\alpha\in[1-\langle\beta\rangle,1)$.}}
\label{f:P-1}
\end{figure}

\textbf{Case 3:} $\frac{q-\alpha}{\beta_{q,m}} < T_{\beta_{q,m},\alpha}^k(1) < P$.  In this case, $\delta^{k+1}=\beta_{ q ,m}\delta^k<0$. We now state the bound for $\delta^k$
\begin{equation}\label{eq:case-3-all}
\delta^k>\frac{ q  -\alpha}{\beta_{ q ,m}}-\frac{ q  -\alpha}{\beta_{ q ,m}-1}
=\frac{\alpha-\frac{ q }{\beta_{ q ,m}^m}}{\beta_{ q ,m}(\beta_{ q ,m}-1)}-\frac{1}{\beta_{ q ,m}}.
\end{equation}
 If the inequality $\delta^{k+i} < -\frac{q}{\beta_{q,m}}$ or \eqref{eq:case-3-all} persists for all indices $2 \leq i \leq j\leq m-1$, and then $\delta^{k+j} = -\frac{q}{\beta_{q,m}}$, triggering matching at step $k+j+1$. Conversely, failure of this persistent inequality implies the existence of at least one iteration $j'\leq m-1$ satisfying  $$\frac{\alpha-\frac{ q }{\beta_{ q ,m}^m}}{\beta_{ q ,m}(\beta_{ q ,m}-1)}-\frac{1}{\beta_{ q ,m}}>\delta^{k+j'} > -\frac{1}{\beta_{q,m}},$$
 which falls into Case 4.

\textbf{Case 4:} $P - \frac{1}{\beta_{q,m}}<T_{\beta_{q,m},\alpha}^k(1)<\frac{q-\alpha}{\beta_{q,m}}$. By the analysis in Case 3, the bound of $\delta^k$ is 
$$\frac{\alpha-\frac{ q }{\beta_{ q ,m}^m}}{\beta_{ q ,m}(\beta_{ q ,m}-1)}-\frac{1}{\beta_{ q ,m}}>\delta^k > -\frac{1}{\beta_{q,m}}.$$
Here $T_{\beta_{ q ,m},\alpha}^{k+1}(1)>P$ and $$\delta^{k+1}=\beta_{ q ,m}\delta^k+1=  \frac{q-e^k_2}{\beta_{q,m}}+ \cdots+\frac{ q-e^k_m }{\beta_{q,m}^{m-1}}+\frac{ q }{\beta_{q,m}^m}>0.$$

Note that $T_{\beta_{ q ,m},\alpha}(1)=\beta_{ q ,m}+\alpha-( q +1)<P$ for all $\alpha\in [0,1/2+q/{\beta_{ q ,m}^m})$.
Then there are two things that can happen for $\delta^{k+2}$ (See figure \ref{f:P-1} for an illustration). 
\begin{enumerate}
    \item If $T_{\beta_{q,m},\alpha}^{k+1}(1) > \frac{q + 1 - \alpha}{\beta_{q,m}}$, then $T_{\beta_{q,m},\alpha}^{k+2}(1) < P$ and $\delta^{k+2}(\beta_{q,m},\alpha) = \beta_{q,m}\delta^{k+1}(\beta_{q,m},\alpha) - 1 < 0$.
    \item Otherwise, $\delta^{k+2}(\beta_{q,m},\alpha) = \beta_{q,m}\delta^{k+1}(\beta_{q,m},\alpha) > 0$.
\end{enumerate}

To consider the situation of $\delta^{k+2}$, we need more analysis of this dynamical system.
Using the definition of  $\beta_{q,m}$ and $q - \frac{q}{\beta_{q,m}^m} = \beta_{q,m} - 1$, we derive that
\begin{equation}\label{eq:case-3}
\begin{split}
&\frac{1}{\beta_{ q ,m}}-\frac{\alpha-\frac{ q }{\beta_{ q ,m}^m}}{\beta_{ q ,m}(\beta_{ q ,m}-1)}
>\frac{ q }{\beta_{ q ,m}^2}+\ldots+\frac{ q }{\beta_{ q ,m}^{m+1}}-\frac{1-\frac{ q }{\beta_{ q ,m}^m}}{2\beta_{ q ,m}(\beta_{ q ,m}-1)}\\
=&\frac{ q }{\beta_{ q ,m}^2}+\ldots
+\frac{ q }{\beta_{ q ,m}^{m+1}}-\left(\frac{1}{2\beta_{ q ,m}^2}+\frac{1}{2\beta_{ q ,m}^3}+\ldots\right)
+\left(\frac{ q }{2\beta_{ q ,m}^{m+2}}+\frac{ q }{2\beta_{ q ,m}^{m+3}}+\ldots\right)\\
>&\frac{ q }{\beta_{ q ,m}^2}+\frac{1}{2\beta_{ q ,m}^2}+\frac{ q }{2\beta_{ q ,m}^3}   \ldots
+\frac{ q }{2\beta_{ q ,m}^{m+1}}-\frac{ q }{2\beta_{ q ,m}^{m+2}}
-\left(\frac{1}{2\beta_{ q ,m}^2}+\ldots+\frac{1}{2\beta_{ q ,m}^{m+1}}\right)\\
=&\frac{ q }{\beta_{ q ,m}^2} + \frac{ q  -1}{2\beta_{ q ,m}^3}+\ldots+\frac{ q  -1}{2\beta_{ q ,m}^m}
+\frac{ q  -1}{2\beta_{ q ,m}^{m+1}}-\frac{ q }{2\beta_{ q ,m}^{m+2}}.
\end{split}
\end{equation} 
For $b_1\ldots b_m\in\{0,q\}^m$, recall from \eqref{eq:sim-de}, we simply denote by
 $$\frac{b_1}{\beta_{ q ,m}}+\ldots +\frac{b_m}{\beta_{ q ,m}^m}=b_1\ldots b_m.$$
Define that
 $$b_1\ldots b_m^-:=\max\{a_1\ldots a_m\in\{0,q\}^m:a_1\ldots a_m<b_1\ldots b_m\}.$$
Then for any $\alpha\in \left[\frac{q}{\beta_{q,m}^{m-1}}, \frac{q+2-\beta_{q,m}}{2}\right)$, there exist a word  $b^*:=0b_2\ldots b_m\in\{0,q\}^m$ such that 
 \begin{equation}\label{eq:bdd}
\begin{split}
&0b_2\ldots b_m^-\leq P-\frac{p-\alpha}{\beta_{q,m}}<0b_2\ldots b_m\quad \text{and}\\
&\frac 1{\beta_{q,m}}-0b_2\ldots b_m<\frac{p+1-\alpha}{\beta_{q,m}}- P\leq \frac 1{\beta_{q,m}}-0b_2\ldots b_m^-.
\end{split}
\end{equation}

For $q\geq 2$, \eqref{eq:case-3} implies that  $0b_2\ldots b_m\geq 0q^20^{m-3}$.  
Thus, we rewrite $\delta^{k}\leq -b^*$ by 
\begin{equation}\label{eq:re-d-k}
\delta^{k}=-\left(\frac{ q }{\beta_{ q ,m}^2}+\ldots+\frac{ q }{\beta_{ q ,m}^{l_1}}+\frac{ q }{\beta_{ q ,m}^{l_2}} +\ldots+\frac{ q }{\beta_{ q ,m}^{l_3}}+\ldots +\frac{ q }{\beta_{ q ,m}^{l_{j-1}}} +\ldots+\frac{ q }{\beta_{ q ,m}^{l_j}}\right)
\end{equation}
 for some odd  $j\geq 1$ with indices  $3\leq l_1<l_2\leq l_3<\ldots\leq l_{j-2} <l_{j-1}\leq l_j\leq m$. 
This gives that
$$\delta^{k+1}=\beta_{q,m}\delta^k+1=\frac{ q }{\beta_{ q ,m}^{l_1}}+\ldots +\frac{ q }{\beta_{ q ,m}^{l_2-2}} +\frac{ q }{\beta_{ q ,m}^{l_3}}+\ldots +\frac{ q }{\beta_{ q ,m}^{l_j}}+\ldots+\frac{ q }{\beta_{ q ,m}^{m}}>0.$$

From \eqref{eq:case-3} and \eqref{eq:bdd}, we have that $\frac{p+1-\alpha}{\beta_{q,m}}- P<0^2q0^{m-3}$.
Note that if $\delta^{k+1}>\frac{p+1-\alpha}{\beta_{q,m}}- P$, then  $T_{\beta_{q,m},\alpha}^{k+2}(1)<P$ and  $\delta^{k+2}=\beta_{q,m} \delta^{k+1}-1<0$. Otherwise $T_{\beta_{q,m},\alpha}^{k+2}(1)>P$ and  $\delta^{k+2}=\beta_{q,m} \delta^{k+1}>0$. 
By direct computation, we have that $$\beta_{q,m}^{l_1-2} \delta^{k+1}=\frac{q}{\beta_{q,m}^2}+\ldots+\frac{q}{\beta_{q,m}^{m-l_1+2}} >\frac{p+1-\alpha}{\beta_{q,m}}- P.$$
Thus there exists  $i\leq l_1-2$ such that $$\beta_{q,m}^{i} \delta^{k+1}=\frac{ q }{\beta_{ q ,m}^{l_1-i}}+\ldots +\frac{ q }{\beta_{ q ,m}^{l_2-2-i}} +\frac{ q }{\beta_{ q ,m}^{l_3-i}}+\ldots +\frac{ q }{\beta_{ q ,m}^{l_j-i}}+\ldots+\frac{ q }{\beta_{ q ,m}^{m-i}} >\frac{p+1-\alpha}{\beta_{q,m}}- P.$$
Consequently, there exists a unique minimal integer $1 \leq l_1' \leq l_1-1$ satisfying $T_{\beta_{q,m},\alpha}^{k+l_1'}(1) < P$ and \begin{equation}\begin{split}
 \delta^{k+l_1'}=&\beta_{q,m}^{l_1'-1} \delta^{k+1}-1\\
 =-&\left(\frac{ q }{\beta_{ q ,m}}+\ldots+\frac{ q }{\beta_{ q ,m}^{l_1-l_1'}}+\frac{ q }{\beta_{ q ,m}^{l_2-l_1'}} +\ldots+\frac{ q }{\beta_{ q ,m}^{l_3-l_1'}}\right.\\
 &\left.+\ldots +\frac{ q }{\beta_{ q ,m}^{l_{j-1}-l_1'}} +\ldots+\frac{ q }{\beta_{ q ,m}^{l_j-l_1'}}+\frac{q}{\beta_{q,m}^{m-l_1'+2}}+\ldots+\frac{q}{\beta_{q,m}^{m}}\right)<0.\end{split}\end{equation}
The steps $k+l_1'+1$ to $k+l_1$ follow Case 2, while steps $k+l_1$ to $k+l_2-2$ follow Case 3 (if $l_1 > l_2-2$, no Case 3 occurs). Thus, we have 
 \begin{equation} 
 \begin{split}
\delta^{k+l_2-2}=&-\left(\frac{ q }{\beta_{ q ,m}^{2}} +\ldots+\frac{ q }{\beta_{ q ,m}^{l_3-(l_2-2)}}+\ldots +\frac{ q }{\beta_{ q ,m}^{l_{j-1}-(l_2-2)}} +\ldots+\frac{ q }{\beta_{ q ,m}^{l_j-(l_2-2)}}\right.\\
&\left. +\frac{ q }{\beta_{ q ,m}^{m-l_2+3}}+\ldots +\frac{ q }{\beta_{ q ,m}^{m+l_1'-l_2+2}}\right).
\end{split}\end{equation}
If $\delta^{k+l_2-2}$ enters Case 3, then we have that 
\begin{equation} 
 \begin{split}
 \delta^{k+l_2-1}
 =&-\left(\frac{ q }{\beta_{ q ,m}} +\ldots+\frac{ q }{\beta_{ q ,m}^{l_3-(l_2-2)-1}}+\ldots +\frac{ q }{\beta_{ q ,m}^{l_{j-1}-(l_2-2)-1}} +\ldots+\frac{ q }{\beta_{ q ,m}^{l_j-(l_2-2)-1}}\right.\\
 &\left. +\frac{ q }{\beta_{ q ,m}^{m-l_2+3-1}}+\ldots +\frac{ q }{\beta_{ q ,m}^{m+l_1'-l_2+2-1}}\right),
\end{split}\end{equation}
This remains in Case 2 until step $k+l_3$. Then if enters Case 3 until step $k+l_4-2$, we set $l_2' = l_4$ and conclude that 
 \begin{equation} 
 \begin{split}
\delta^{k+l_4-2}=&-\left(\frac{ q }{\beta_{ q ,m}^{2}} +\ldots+\frac{ q }{\beta_{ q ,m}^{l_5-(l_4-2)}}+\ldots +\frac{ q }{\beta_{ q ,m}^{l_{j-1}-(l_4-2)}} +\ldots+\frac{ q }{\beta_{ q ,m}^{l_j-(l_4-2)}}\right.\\
&\left. +\frac{ q }{\beta_{ q ,m}^{m-l_4+3}}+\ldots +\frac{ q }{\beta_{ q ,m}^{m+l_1'-l_4+2}}\right).
\end{split}\end{equation}
We take $l_2'=l_4$.

Otherwise, $\delta^{k+l_2-2}$ enters Case 4, we have that $\delta^{k+l_2-1} = \beta_{q,m}\delta^{k+l_2-2} + 1 > 0$ and set $l_2' = l_2$. Then there exists $l_2' \leq l_3' \leq l_3-1$ such that $T_{\beta_{q,m}}^{k+l_3'-1} > \frac{q+1-\alpha}{\beta_{q,m}}$ and $\delta^{k+l_3'} < 0$. This enters Case 3 until step $k+l_3$, and then Case 2 until step $k+l_4-2$. So we have that 
 \begin{equation} 
 \begin{split}
\delta^{k+l_4-2}=&-\left(\frac{ q }{\beta_{ q ,m}^{2}} +\ldots+\frac{ q }{\beta_{ q ,m}^{l_5-(l_4-2)}}+\ldots +\frac{ q }{\beta_{ q ,m}^{l_{j-1}-(l_4-2)}} +\ldots+\frac{ q }{\beta_{ q ,m}^{l_j-(l_4-2)}}\right.\\
&\left. +\frac{ q }{\beta_{ q ,m}^{m-l_4+3}}+\ldots +\frac{ q }{\beta_{ q ,m}^{m+l_1'-l_4+2}}%
+\frac{ q }{\beta_{ q ,m}^{m+l_2-l_4+3}}+\ldots +\frac{ q }{\beta_{ q ,m}^{m+l_3'-l_4+2}}\right).
\end{split}\end{equation}
After $m$ steps, there is an  increasing sequence $\{i_2,i_4,\ldots,i_{j_1-1}\}\subset \{2,4,\ldots,j-1\}$ such that $l_0'=2$ and $l_n'=l_{i_n}$ for all $n=2,\ldots, j_1-1$ and 
 $$\delta^{k+m}=-\left(\frac{ q }{\beta_{ q ,m}^2}+\ldots+\frac{ q }{\beta_{ q ,m}^{l_1'}}+\frac{ q }{\beta_{ q ,m}^{l_2'}} +\ldots+\frac{ q }{\beta_{ q ,m}^{l_3'}}+\ldots \frac{ q }{\beta_{ q ,m}^{l_{j-1}'}} +\ldots+\frac{ q }{\beta_{ q ,m}^{l_{j_1}'}}\right).$$
By above process, we have that   
 $l_{i_n}\leq l_{n+1}'\leq l_{i_n+1}-1$ for $n=0,2,\ldots, j_1$ and $j_1\leq j$.
 Iterating this process,  matching occurs within $k+m(m-1)$ steps.

For $q=1$, we consider the matching properties in three cases based on the value of $P - \frac{1-\alpha}{\beta_{1,m}}$. 

For $P - \frac{1-\alpha}{\beta_{1,m}} \geq \frac{1}{\beta_{1,m}^2} + \frac{1}{\beta_{1,m}^3}$, rewrite $\delta^k$ as the form in \eqref{eq:re-d-k}.
Using the same iterative argument as in the $q \geq 2$ case, we prove matching occurs within finite steps. 

If  $\frac{1}{\beta_{1,m}^2} + \sum_{i=4}^m\frac{1}{\beta_{1,m}^i}<P - \frac{1-\alpha}{\beta_{1,m}} < \frac{1}{\beta_{1,m}^2} + \frac{1}{\beta_{1,m}^3}$, we have  $b^* = \frac{1}{\beta_{1,m}^2} + \frac{1}{\beta_{1,m}^3}$ which satisfies \eqref{eq:bdd}. For $\delta^k \leq  -b^*$ with $m \geq 4$, the $q \geq 2$ argument applies. The case $m=3$ and $\delta^k = -b^*$  requires separate consideration. 
If $\frac{2-\alpha}{\beta_{1,3}} - P < \frac{1}{\beta_{1,3}^3}$, then
  \[
  \delta^{k+1} = \frac{1}{\beta_{1,3}^3}, \quad \delta^{k+2} = -\left( \frac{1}{\beta_{1,3}} + \frac{1}{\beta_{1,3}^3} \right), \quad \delta^{k+3} = -\frac{1}{\beta_{1,3}^2}, \quad \delta^{k+4} = -\frac{1}{\beta_{1,3}}
  \]
  and matching occurs at step $k+5$.  
But if $\frac{1}{\beta_{1,3}^3} < \frac{2-\alpha}{\beta_{1,3}} - P < \frac{1}{\beta_{1,3}^2}$, then
  \[
  \delta^{k+1} = \frac{1}{\beta_{1,3}^3}, \quad \delta^{k+2} = \frac{1}{\beta_{1,3}^2}, \quad \delta^{k+3} = -\left( \frac{1}{\beta_{1,3}^2} + \frac{1}{\beta_{1,3}^3} \right) = \delta^k
  \]
  resulting in a $3$-periodic orbit. In this case, the intermediate $\beta$-expansions of $0$ and $1$ develop periodic tails, respectively,
\begin{itemize}
  \item the coding of $0$ is $b_1\ldots b_{k-1}1^\infty$;
  \item the coding of $1$ is $a_1\ldots a_{k-1}(012)^\infty$.
\end{itemize}
By Claim \ref{cl:N_k-N} in the proof of Lemma \ref{lem:right-continuity-M}, for each fixed prefix pair $(b_1\ldots b_{k-1}, a_1\ldots a_{k-1})$, at most one $\alpha$ satisfies both the inequality condition and the periodic coding requirement. Consequently, the set of such exceptional $\alpha$ is at most countable.

For $\frac{1}{\beta_{1,m}^2} \leq P - \frac{1-\alpha}{\beta_{1,m}} \leq \frac{1}{\beta_{1,m}^2} + \frac{1}{\beta_{1,m}^3}$,  
here $b^* = 010e_4\ldots e_m$ with $e_i \in \{0,1\}$ and $\frac{2-\alpha}{\beta_{1,m}} - P > 001(1-e_4)\ldots(1-e_m)1$. Express $\delta^k<-b^*$ as
\begin{equation}\label{eq:re-d-k-1}
\delta^{k} = -\left( \frac{1}{\beta_{1,m}^2} + \cdots + \frac{1}{\beta_{1,m}^{l_1}} + \frac{1}{\beta_{1,m}^{l_2}} + \cdots + \frac{1}{\beta_{1,m}^{l_j}} \right)
\end{equation}
for odd $j \geq 1$ and indices $2 \leq l_1 < l_2 \leq \cdots \leq l_j \leq m$. Then,
\[
\delta^{k+1} = \beta_{1,m}\delta^k + 1 = \frac{1}{\beta_{1,m}^{l_1}} + \cdots + \frac{1}{\beta_{1,m}^{l_2-2}} + \frac{1}{\beta_{1,m}^{l_3}} + \cdots + \frac{1}{\beta_{1,m}^m} > 0.
\]
Compare $\beta_{1,m}^{l_1-3} \delta^{k+1}$ to $\frac{2-\alpha}{\beta_{1,m}} - P$.   
If $\beta_{1,m}^{l_1-3} \delta^{k+1} > \frac{2-\alpha}{\beta_{1,m}} - P$, then we have 
  \[
  \delta^{k+l_1-2} = -\left( \frac{1}{\beta_{1,m}} + \frac{1}{\beta_{1,m}^2} + \frac{1}{\beta_{1,m}^{l_2-l_1+2}} + \cdots + \frac{1}{\beta_{1,m}^m} \right).
  \]
 Otherwise, $\beta_{1,m}^{l_1-2} \delta^{k+1} > \frac{2-\alpha}{\beta_{1,m}} - P$ implies that
  \[
  \delta^{k+l_1-1} = -\left( \frac{1}{\beta_{1,m}} + \frac{1}{\beta_{1,m}^{l_2-l_1+1}} + \cdots + \frac{1}{\beta_{1,m}^m} \right).
  \]

After $m$ steps, there exist indices $\{i_2,\ldots,i_{j_1-1}\} \subset \{2,4,\ldots,j-1\}$ with $l_0' = 2$, $l_n' = l_{i_n}$, and $l_{i_n} \leq l_{n+1}' \leq l_{i_n+1}-1$ for $n=0,2,\ldots,j_1$ ($j_1 \leq j$) such that:
\[
\delta^{k+m} = -\left( \frac{1}{\beta_{1,m}^2} + \cdots + \frac{1}{\beta_{1,m}^{l_1'}} + \frac{1}{\beta_{1,m}^{l_2'}} + \cdots + \frac{1}{\beta_{1,m}^{l_{j_1}'}} \right).
\]

The iteration process leads to two distinct outcomes. The sequence $\{\delta^n\}$ eventually satisfies the matching condition within finite steps.
 Otherwise, after $\kappa m$ iterations, the difference term becomes that 
  \[
  \delta^{k+\kappa m} = -\left( \frac{1}{\beta_{1,m}^2} + \frac{1}{\beta_{1,m}^{l_1}} + \cdots + \frac{1}{\beta_{1,m}^{l_{j'}}} \right) < \frac{1-\alpha}{\beta_{1,m}} - P,
  \]
  with $\delta^{k+\kappa m + l_i - 1} < \frac{1-\alpha}{\beta_{1,m}} - P$ for all $1 \leq i \leq j'$. After $m$ additional steps, it returns to its previous state,
  \[
  \delta^{k+(\kappa+1)m} = -\left( \frac{1}{\beta_{1,m}^2} + \frac{1}{\beta_{1,m}^{l_1}} + \cdots + \frac{1}{\beta_{1,m}^{l_{j'}}} \right) = \delta^{k+\kappa m},
  \]
  establishing a periodic orbit with period $m$. For instance, when $m \geq 4$ is even and $b^* = (01)^{m/2}$, $\delta^k = -b^*$ induces such periodicity.
  
  Each periodic pattern corresponds to at most one $\alpha$ (determined by the $\beta$-expansion condition). Since these patterns are characterized by eventually periodic expansions of $0$ and $1$, and the set of eventually periodic sequences is countable, the set of $\alpha \in [1 - \langle \beta_{1,m} \rangle, 1)$ without matching is at most countable. Consequently, for $q=1$, matching occurs within finite steps except for a countable set of $\alpha$.
\end{proof}
\begin{proof}[Proof of Theorem \ref{thm:a-e-linear}]
By Lemma \ref{le:linea-on-matching-interval}, it suffices to show that for any $m \geq 2$ and $q \geq 1$, the transformation $T_{\beta_{q,m},\alpha}$ exhibits matching for Lebesgue almost every $\alpha \in [1 - \langle\beta_{q,m}\rangle, 1)$.

Let $P = P_{\beta_{q,m},\alpha} = \frac{q - \alpha}{\beta_{q,m} - 1}$. For $q \geq 2$, Proposition \ref{cl:matching-P} implies that matching occurs at step $\ell > k$ when $T_{\beta_{q,m},\alpha}^k(0) = P$. Fix $\varepsilon > 0$ such that for $I = (P - \varepsilon, P)$, if $T_{\beta_{q,m},\alpha}^k(0) \in I$, the sign sequence $\left( T_{\beta_{q,m},\alpha}^i(1) - T_{\beta_{q,m},\alpha}^i(0) \right)_{i=k}^{\ell}$ coincides with $\left( T_{\beta_{q,m},\alpha}^i(1) - P \right)_{i=k}^{\ell}$, ensuring matching.

For $q = 1$, the preceding proof establishes that matching fails for at most countably many $\alpha \in [1 - \langle\beta_{1,m}\rangle, 1)$. 

By Theorem 3.1 in \cite{Bruin-Keszthelyi-2022}, the orbit $\{T_{\beta_{q,m},\alpha}^k(0)\}$ is dense in $[0,1)$ for Lebesgue almost every $\alpha \in [q+1 - \beta_{q,m}, 1)$. Therefore, for $q \geq 2$, $T_{\beta_{q,m},\alpha}^k(0) \in I$ occurs almost surely, triggering matching. 
    When $q = 1$, the countable exclusion set has Lebesgue measure zero.
Since $[q+1 - \beta_{q,m}, 1) = [1 - \langle\beta_{q,m}\rangle, 1)$, matching holds for Lebesgue almost every $\alpha$ in this interval.
\end{proof}

\begin{remark}
Generally, it is not easy to find an $\alpha$ such that $T_{\beta_{ q ,m},\alpha}$ has no matching. But for $q=1$, the proof  of Proposition \ref{cl:matching-P} provides a way to find such an $\alpha$.
\end{remark}

\begin{figure}[h!]
  \centering
   \includegraphics[width=5.5cm]{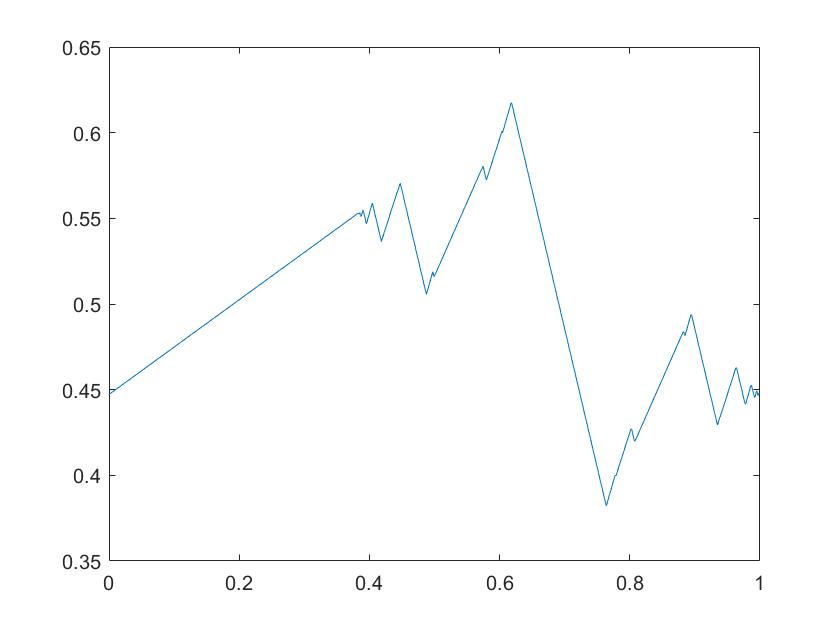}\quad \includegraphics[width=5.5cm]{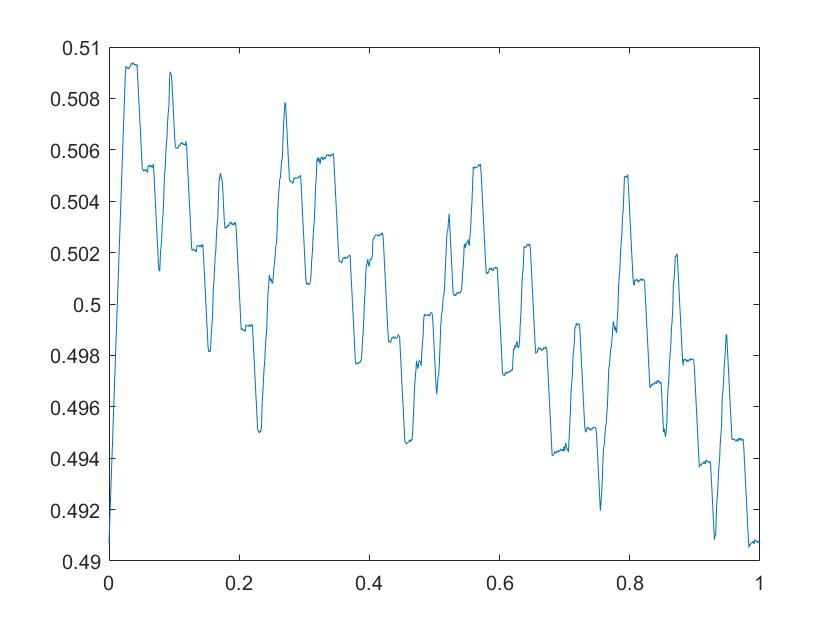}\\
  \caption{Left: $M_{\beta_{1,2}}(\alpha)$ for $\alpha\in[0,1]$ where $\beta_{1,2}$ is the gold mean. Right: $M_{\beta_{2,4}}(\alpha)$ for $\alpha\in[0,1]$ where $\beta_{2,4}=2.974449244\ldots $.}\label{Fig:local-linear}
\end{figure}
Given $m \geq 2$ and $q \geq 1$, we have shown that $[0, 1-\langle\beta_{q,m}\rangle)$ is a matching interval where $M_{\beta_{q,m}}(\alpha)$ increases linearly. For Lebesgue almost every $\alpha \in [ 1-\langle\beta_{q,m}\rangle, 1)$, $M_{\beta_{q,m}}(\alpha)$ is locally linear (see Figure \ref{Fig:local-linear}). 
These observations raise a natural question: on which matching intervals is $M_{\beta_{q,m}}(\alpha)$ increasing, and on which is it decreasing? 

Let $I_{\beta_{q,m}}(a_1\ldots a_{k+1}, b_1\ldots b_{k+1}) \subset [1-\langle\beta_{q,m}\rangle, 1)$ be a matching interval for $k \geq m$. By \eqref{eq:la-i} in the proof of Lemma \ref{le:linea-on-matching-interval}, we have that 
 $\delta^i(\beta_{q,m},\alpha)$ is constant on the interval for $1 \leq i \leq k$ and  $\delta^{k+1}(\beta_{q,m},\alpha) = 0$.
We state the monotonicity of $M_{\beta_{q,m}}$ depends on the sign of $\delta^k(\beta_{q,m},\alpha)$.

\begin{proposition}\label{prop:mono}
Given $m \geq 2$ and $q \geq 1$, let $I_{\beta_{q,m}}(a_1\ldots a_{k+1}, b_1\ldots b_{k+1}) \subset [1-\langle\beta_{q,m}\rangle, 1)$ be a matching interval for $k \geq m$. Then $M_{\beta_{q,m}}(\alpha)$ is increasing on this interval if $\delta^k(\beta_{q,m},\alpha) > 0$. Otherwise, $M_{\beta_{ q ,m}}(\alpha)$ is decreasing.
\end{proposition}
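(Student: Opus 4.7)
My plan is to differentiate the explicit linear expression for $M_{\beta_{q,m}}$ on the matching interval and reduce the monotonicity claim to a sign assertion for a combination of the orbit differences $\delta^i:=\delta^i(\beta_{q,m},\alpha)=T_{\beta_{q,m},\alpha}^i(1)-T_{\beta_{q,m},\alpha}^i(0)$. Set $\beta:=\beta_{q,m}$ and $c_j:=b_j-a_j$. By Lemma \ref{le:linea-on-matching-interval}, on the matching interval we have $M_\beta(\alpha)=K^{-1}\sum_{i=0}^k\lambda_i g_i(\alpha)$, where $\lambda_i=\delta^i/\beta^i$ and $K=\sum_{i=0}^k\lambda_i$ are constant on the interval and $g_i(\alpha)=(T_{\beta,\alpha}^i(1)+T_{\beta,\alpha}^i(0))/2$ is affine in $\alpha$. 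Since the digit sequences $(a_j),(b_j)$ are frozen on the interval, differentiating the explicit formulas \eqref{eq:t-0-t-1-matching} yields $\frac{d}{d\alpha}T_{\beta,\alpha}^i(0)=\frac{d}{d\alpha}T_{\beta,\alpha}^i(1)=(\beta^i-1)/(\beta-1)$, hence $g_i'(\alpha)=(\beta^i-1)/(\beta-1)$, and therefore
\[
M_\beta'(\alpha)=\frac{1}{K(\beta-1)}\sum_{i=0}^k\delta^i(1-\beta^{-i}).
\]
Because $K>0$ and $\beta>1$, the sign of the slope coincides with the sign of $F:=\sum_{i=0}^k\delta^i(1-\beta^{-i})$, so it suffices to prove $\operatorname{sign}(F)=\operatorname{sign}(\delta^k)$.

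The next step is to re-express $F$ through the digit differences $c_j$. Iterating the backward recursion $\delta^i=(\delta^{i+1}+c_{i+1})/\beta$ from the matching boundary $\delta^{k+1}=0$ gives the closed form $\delta^i=\sum_{j=i+1}^{k+1}c_j\beta^{-(j-i)}$. Substituting this into $F$ and exchanging the order of summation produces
\[
F=\sum_{j=1}^{k+1}c_j\,h(j),\qquad h(j):=\frac{\beta^j-1-j(\beta-1)}{\beta^j(\beta-1)}.
\]
A direct calculation gives $h(1)=0$, $h(j)>0$ for $j\ge 2$, and $h(j+1)-h(j)=j(\beta-1)/\beta^{j+1}$, so $h$ is strictly increasing on $\{1,\ldots,k+1\}$. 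The matching condition also forces $\delta^k=c_{k+1}/\beta$, so $\operatorname{sign}(\delta^k)=\operatorname{sign}(c_{k+1})$, and the proposition reduces to proving $\operatorname{sign}\left(\sum_{j=1}^{k+1}c_j\,h(j)\right)=\operatorname{sign}(c_{k+1})$.

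The main obstacle is this last sign identity, since the earlier $c_j$ need not match the sign of $c_{k+1}$ and the weights $h(j)$ are of the same order as $h(k+1)$, ruling out any crude absolute-value domination. My plan is to apply Abel summation with the tail sums $C_j:=\sum_{l=j}^{k+1}c_l$; using $h(1)=0$, $C_{k+2}=0$, and $h(j+1)-h(j)=j(\beta-1)/\beta^{j+1}$, the expression for $F$ converts to
\[
F=(\beta-1)\sum_{j=2}^{k+1}\frac{(j-1)C_j}{\beta^j}.
\]
The decisive reduction is then to establish $C_j\cdot c_{k+1}\ge 0$ for every $j\ge 2$, which immediately yields $\operatorname{sign}(F)=\operatorname{sign}(c_{k+1})=\operatorname{sign}(\delta^k)$. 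I intend to prove this tail-sum alignment by reverse induction on $j$ (starting from $C_{k+1}=c_{k+1}$ and descending toward $j=2$), combining the rigid admissibility structure for $(\delta^i)$ supplied by Lemma \ref{le:t-1-t-0-finite} — which confines each $\delta^i$ to the finite set of values of the form $\pm(e_1^i/\beta+\cdots+e_m^i/\beta^m)$ with $e_l^i\in\{0,q\}$ and $e_1^i\cdots e_m^i\ne q^m$ — with the explicit forward transition rules \eqref{eq:diff-1}--\eqref{eq:diff-2}. This combinatorial step, traversing the admissible sign patterns of $(\delta^i)_{i=0}^k$ that terminate at the prescribed sign of $\delta^k$, is the technically delicate heart of the proof; the bound $|\delta^i|<1$ and the finite admissible transition graph implicit in Lemma \ref{le:t-1-t-0-finite} should keep the case analysis tractable.
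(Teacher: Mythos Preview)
Your derivative computation and the Abel summation manoeuvre are both correct, and the reduction
\[
M_\beta'(\alpha)\ \propto\ F=(\beta-1)\sum_{j=2}^{k+1}\frac{(j-1)\,C_j}{\beta^j},\qquad C_j=\sum_{l=j}^{k+1}c_l,
\]
is a genuinely cleaner starting point than what the paper does. The paper proceeds by a direct induction on the matching time $k$: it works out the cases $k=m,m+1,m+2,m+3$ by hand and then, for the inductive step, compares the coefficient sum $C^k(\beta_{q,m},\alpha)$ to the coefficient sum of a \emph{nearby} matching interval with earlier matching time (one that shares the initial segment of $(\delta^i)$ but whose terminal block of $m$ terms keeps a constant sign), showing the difference has the right sign. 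So your route---Abel summation followed by a pointwise sign claim on the tails $C_j$---is structurally different, and if the tail claim holds it would give a much more transparent proof than the paper's ad hoc comparison.

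The gap is precisely the tail-sum alignment $C_j\cdot c_{k+1}\ge 0$ for all $j\ge 2$, which you state but do not prove. Note that via $c_l=\beta\delta^{l-1}-\delta^l$ one has
\[
C_j=\beta\,\delta^{j-1}+(\beta-1)\sum_{l=j}^{k}\delta^l,
\]
so for $j\ge k-m+2$ the claim is immediate (all $\delta^{j-1},\ldots,\delta^k$ share the sign of $\delta^k$), but for smaller $j$ the term $\beta\delta^{j-1}$ can oppose the tail sum, and nothing in Lemma~\ref{le:t-1-t-0-finite} or in \eqref{eq:diff-1}--\eqref{eq:diff-2} gives this inequality for free. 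In the examples one can check (e.g.\ $m=2$, or the four $k=m+3$ cases for $m=4$) the claim does hold, but the ``reverse induction traversing admissible sign patterns'' you describe is not yet an argument: the admissible transition graph has exponentially many paths in $m$, and you have not identified an invariant that propagates the sign of $C_j$ downward. Until you either produce such an invariant or carry out a case analysis comparable in scope to the paper's inductive comparison, the proof is incomplete at exactly the step you flag as its ``technically delicate heart''.
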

\begin{proof}
Take $\alpha \in I_{\beta_{ q ,m}}(a_1\ldots  a_{k+1},b_1\ldots  b_{k+1})$. We previously established in Lemma~\ref{le:linea-on-matching-interval} that
\begin{align*}
M_{\beta_{q,m}}(\alpha) 
= \frac{1}{\sum_{i=0}^{k} \lambda_i} 
\sum_{i=0}^{k} \frac{\big(T_{\beta_{q,m},\alpha}^i(1) - T_{\beta_{q,m},\alpha}^i(0)\big) g_i(\alpha)}{\beta_{q,m}^i}
\end{align*}
where 
\[
g_i(\alpha) = \dfrac{\beta_{q,m}^i}{2} \left( 
1 - \sum_{j=1}^{i} \frac{a_j + b_j}{\beta_{q,m}^j} 
+ 2\alpha \sum_{j=1}^{i} \frac{1}{\beta_{q,m}^j} 
\right).
\]
Since $\sum_{i=0}^{k} \lambda_i \geq 0$, for the increasing of $M_{\beta_{q,m}}$ it suffices to show 
\begin{equation}\label{eq:coeffi}
C^k(\beta_{q,m},\alpha) := \sum_{i=1}^{k} \frac{(\beta_{q,m}^{i-1} + \cdots + 1) \big( T_{\beta_{q,m},\alpha}^i(1) - T_{\beta_{q,m},\alpha}^i(0) \big)}{\beta_{q,m}^i}
\end{equation}
is positive when $\delta^k(\beta_{q,m},\alpha) > 0$, and negative when $\delta^k < 0$ for decreasing $M_{\beta_{q,m}}$.

We establish the sign relationships for matching at step $k+1$, for which $\delta^{k-m+1}(\beta_{q,m},\alpha)$ through $\delta^k(\beta_{q,m},\alpha)$ share the same sign, while $\delta^{k-m}(\beta_{q,m},\alpha)$ exhibits the opposite sign. Note this sign alternation is necessary; otherwise, matching would occur earlier at step $k+1$.

For the specific case $m=2$, 
if matching occurs in even  steps $(k+1) \geq 4$, the difference sequence follows the alternating pattern
    \[
    \delta^i(\beta_{q,2},\alpha) = \begin{cases} 
        -\frac{q}{\beta_{q,2}^2} & \text{for odd } i\leq k-1, \\
       \frac{q}{\beta_{q,2}^2} & \text{for even } i\leq k-1,
    \end{cases}
    \]
    and then $\delta^k(\beta_{q,2},\alpha)=\frac{q}{\beta_{q,2}}$. This 
    yields the positive coefficient expression
\begin{align*}
C^k(\beta_{q,2},\alpha)=\sum_{i=1}^{k} \frac{(\beta_{q,2}^{i-1} + \cdots + 1) \delta^i(\beta_{q,2},\alpha)}{\beta_{q,2}^i}=\sum_{i=2}^{(k+1)/2} \frac{q}{\beta_{q,2}^{2i}}+\frac{(\beta_{q,2}^{k-1} + \cdots + 1)}{\beta_{q,2}^k}\frac{q}{\beta_{q,2}}.
\end{align*}

When matching occurs in odd  steps $(k+1) \geq 3$, the difference sequence are $\delta^k(\beta_{q,2},\alpha)=-\frac{q}{\beta_{q,2}}$ and 
    \[
    \delta^i(\beta_{q,2},\alpha) = \begin{cases} 
        -\frac{q}{\beta_{q,2}^2} & \text{for odd } i\leq k-1, \\
       \frac{q}{\beta_{q,2}^2} & \text{for even } i\leq k-1,
    \end{cases}
    \]
    producing the negative coefficient
    \begin{align*}
C^k(\beta_{q,2},\alpha)
=\sum_{i=2}^{k/2 }\frac{q}{\beta_{q,2}^{2i}}-\frac{(\beta_{q,2}^{k-2} + \cdots + 1)}{\beta_{q,2}^{k-1}}\frac{q}{\beta_{q,2}^2}-\frac{(\beta_{q,2}^{k-1} + \cdots + 1)}{\beta_{q,2}^{k}}\frac{q}{\beta_{q,2}}.\end{align*}

Now consider $m \geq 3$. For $k=m$, we have $\delta^1(\beta_{q,m},\alpha) = -\frac{q}{\beta_{q,m}^{-m}}$, and all differences $\delta^i(\beta_{q,m},\alpha)$ for $1 \leq i \leq m$ remain negative. Consequently, by \eqref{eq:coeffi},
\begin{equation}
C^m(\beta_{q,m},\alpha) = \sum_{i=1}^{m} \frac{(\beta_{q,m}^{i-1} + \cdots + 1) \delta^i(\beta_{q,m},\alpha)}{\beta_{q,m}^i} < 0,
\end{equation}
establishing that $M_{\beta_{q,m}}(\alpha)$ decreases strictly on those matching intervals with matching at step $m+1$.

For $k = m+1$, we have $\delta^1(\beta_{q,m},\alpha) = -\frac{q}{\beta_{q,m}^m}$ and
\[
\delta^2(\beta_{q,m},\alpha) = q^{m-2}0q, \quad \delta^3(\beta_{q,m},\alpha) = q^{m-3}0q0, \quad \ldots, \quad \delta^{m-1}(\beta_{q,m},\alpha) = q0^{m-1},
\]
all of which are positive. Therefore,
\begin{equation}
C^{m+1}(\beta_{q,m},\alpha) = \sum_{i=1}^{m+1} \frac{(\beta_{q,m}^{i-1} + \cdots + 1) \delta^i(\beta_{q,m},\alpha)}{\beta_{q,m}^i} > 0.
\end{equation}
Thus, $M_{\beta_{q,m}}(\alpha)$ is strictly increasing on intervals where matching occurs at step $m+2$.

For $k = m+2$, the monotonicity of $M_{\beta_{q,m}}(\alpha)$ depends on the sequences $\delta^1(\beta_{q,m},\alpha), \dots, \delta^{m+2}(\beta_{q,m},\alpha)$, which can be either:
\begin{align*}
\text{Case 1:} \quad & -0^{m-1}q,\ -0^{m-2}q0,\quad q^{m-3}0q^2,\ \dots,\ q0^{m-1} \quad \text{or} \\
\text{Case 2:} \quad & -0^{m-1}q,\ q^{m-2}0q,\quad -0^{m-3}q0q,\ \dots,\ -q0^{m-1}.
\end{align*}
The corresponding coefficient sum is given by
\begin{equation}
C^{m+2}(\beta_{q,m},\alpha) = \sum_{i=1}^{m+2} \frac{(\beta_{q,m}^i - 1) \delta^i(\beta_{q,m},\alpha)}{\beta_{q,m}^i (\beta_{q,m} - 1)}.
\end{equation}
A direct calculation shows that $C^{m+2} > 0$ in Case 1 and $C^{m+2} < 0$ in Case 2.

For $k = m+3$, the sequences $\delta^1(\beta_{q,m},\alpha), \dots, \delta^{m+3}(\beta_{q,m},\alpha)$ admit one of four cases, which are
\begin{align*}
\text{Case A:} \quad & -0^{m-1}q,\ -0^{m-2}q0,\ -0^{m-3}q0^2,\quad q^{m-4}0q^3,\ \dots,\ q0^{m-1}; \\
\text{Case B:} \quad & -0^{m-1}q,\ -0^{m-2}q0,\ q^{m-3}0q^2,\quad -0^{m-4}q0^2q,\ \dots,\ -q0^{m-1}; \\
\text{Case C:} \quad & -0^{m-1}q,\ q^{m-2}0q,\ q^{m-3}0q0,\quad -0^{m-4}q0q^2,\ \dots,\ -q0^{m-1}; \\
\text{Case D:} \quad & -0^{m-1}q,\ q^{m-2}0q,\ -0^{m-3}q0q,\quad q^{m-4}0q0q,\ \dots,\ q0^{m-1}.
\end{align*}
Using the same coefficient sum formula
\[
C^{m+3}(\beta_{q,m},\alpha) = \sum_{i=1}^{m+3} \frac{(\beta_{q,m}^i - 1) \delta^i(\beta_{q,m},\alpha)}{\beta_{q,m}^i (\beta_{q,m} - 1)},
\]
we find $C^{m+3} > 0$ in Cases B and C, while $C^{m+3} < 0$ in Cases A and D. Note that Case A cannot occur when $m=3$.

Now suppose that for all matching interval  $I_{\beta_{q,m}}(a_1\ldots a_{j}, b_1\ldots b_{j}) \subset [1-\langle\beta_{q,m}\rangle, 1)$ with $m+3 \leq j \leq k$, the function $M_{\beta_{q,m}}(\alpha)$ is increasing whenever $\delta^{j-1}(\beta_{q,m},\alpha) > 0$ and decreasing whenever $\delta^{j-1}(\beta_{q,m},\alpha) < 0$. 

Consider a matching interval  $I_{\beta_{q,m}}(a_1\ldots a_{k+1}, b_1\ldots b_{k+1}) \subset [1-\langle\beta_{q,m}\rangle, 1)$. For the initial segment, assume 
\[
\delta^{k-(m+1)}(\beta_{q,m},\alpha) = -\left( \frac{e_1}{\beta_{q,m}} + \cdots + \frac{e_m}{\beta_{q,m}^m} \right), \quad \text{where } e_1 \ldots e_m \in \{0,q\}^m \setminus \{0^m,q^m\}.
\]
The sequence $\{\delta^\ell(\beta_{q,m},\alpha)\}_{k-(m+1) \leq \ell \leq k}$ must be one of the following cases,
\begin{equation}\label{eq:matching-k}
\begin{split}
\text{Case 1:} \quad & -e_1\ldots e_m,\  -e_2\ldots e_m0,\quad   (q-e_3)\ldots (q-e_m)q^2,\ \ldots,\ q0^{m-1} \\
\text{Case 2:} \quad & -e_1\ldots e_m,\  (q-e_2)\ldots (q-e_m)q,\quad   -e_3\ldots e_m 0q,\ \ldots,\ -q0^{m-1}.
\end{split}
\end{equation}

Let $I_k$ be a matching interval with matching at step $k$. For all $\alpha' \in I_k$, the first $k-(m+1)$ terms match, and $\{\delta^\ell(\beta_{q,m},\alpha')\}_{k-(m+1) \leq \ell \leq k-1}$ is 
\[
-e_1\ldots e_m,\  (q-e_2)\ldots (q-e_m)q,\  (q-e_3)\ldots (q-e_m)q0,\  \ldots,\ q0^{m-1}.
\]
The coefficient sum satisfies
\begin{align*}
C^{k}(\beta_{q,m},\alpha') 
&= \sum_{\ell=1}^{k-m-1} \frac{(\beta_{q,m}^{\ell-1} + \cdots + 1) \delta^\ell(\beta_{q,m},\alpha')}{\beta_{q,m}^\ell} \\
&\quad + \sum_{\ell=k-m}^{k-1} \frac{(\beta_{q,m}^{\ell-1} + \cdots + 1) \delta^\ell(\beta_{q,m},\alpha')}{\beta_{q,m}^\ell} > 0.
\end{align*}

For Case 1 in \eqref{eq:matching-k}, since the initial $k-(m+1)$ terms of $C^{k+1}(\beta_{q,m},\alpha) $ and $C^{k}(\beta_{q,m},\alpha') $ are identical, it suffices to compare the remaining segments. By direct term-wise comparison of the sequences, we obtain that 
\begin{align*}
& \sum_{\ell=k-m}^{k} \frac{(\beta_{q,m}^{\ell-1} + \cdots + 1) \delta^\ell(\beta_{q,m},\alpha)}{\beta_{q,m}^\ell} - \sum_{\ell=k-m}^{k-1} \frac{(\beta_{q,m}^{\ell-1} + \cdots + 1) \delta^\ell(\beta_{q,m},\alpha')}{\beta_{q,m}^\ell} \\
&= -\frac{\beta_{q,m}^{k-m-1} + \cdots + 1}{\beta_{q,m}^{k-m}} + \sum_{\ell=k-m+1}^{k} \frac{(\beta_{q,m}^{\ell-1} + \cdots + 1) q}{\beta_{q,m}^{k+1}} \\
&> -\frac{\beta_{q,m}^{k-m-1} + \cdots + 1}{\beta_{q,m}^{k-m}} +  \frac{\beta_{q,m}^{k-m} + \cdots + 1}{\beta_{q,m}^{k-m+1} } \sum_{r=1}^{m}\frac{q}{\beta_{q,m}^r} = \frac{1}{\beta_{q,m}^{k-m+1}} > 0,
\end{align*}
where the first equality results from term-by-term comparison of the sequences.
 Thus $M_{\beta_{q,m}}$ is increasing in Case 1.

For Case 2 in \eqref{eq:matching-k}, 
there exists a matching interval $I_\kappa$ (matching step $\kappa$ between $k-m$ and $k-1$) sharing the first $k-(m+1)-2$ terms and the coefficient sum is smaller than $0$. Its last $m$ terms are
\[
-e_0e_1\ldots e_{m},\  -e_1\ldots e_{m-1}0,\  \ldots,\  -e_{m}0^{m-1},
\]
where if $e_\ell \ldots e_{m}0^{\ell-1} = 0^m$ for some $\ell$, subsequent terms vanish. 
A similar calculation shows the coefficient sum of $C^{k+1}(\beta_{q,m},\alpha) $  is smaller, confirming the decreasing property.
\end{proof}

 \section{Further questions}\label{sec:question}
We conclude this paper with one open problem.
%
For fixed $\beta = \beta_{q,m}$ with $m \geq 2$ and $q \geq 1$, we proved that $M_{\beta}(\alpha)$ exhibits matching for Lebesgue almost every $\alpha \in [0,1]$. Define
    \[
    \Lambda(\beta_{q,m}) := \left\{ \alpha \in [0,1] : T_{\beta_{q,m},\alpha} \text{ has no matching} \right\}.
    \]
    What is the Hausdorff dimension of $\Lambda(\beta_{q,m})$? While
    \[
    \dim_H (\Lambda(\beta_{q,2})) = \frac{\log q}{\log \beta_{q,2}}
    \]
    was established for $m=2$ and any $q \geq 1$ in \cite{Bruin-Carminati-Kalle-2017}, we conjecture that for general $m \geq 2$,
    \[
    \dim_H (\Lambda(\beta_{q,m})) = \frac{\log q}{\log \beta_{q,m}}.
    \]

\section*{Acknowledgments}
Part of this work was completed during a visit by the second author to Leiden University. The authors thank the Mathematical Institute of Leiden University for its hospitality, and particularly express gratitude to Charlene Kalle. We also thank Derong Kong for valuable suggestions that improved the exposition.


\begin{thebibliography}{10}

\bibitem{Baker-2015}
S.~Baker.
\newblock Approximation properties of {$\beta$}-expansions.
\newblock {\em Acta Arith.}, 168(3):269--287, 2015.

\bibitem{Baker-2018}
S.~Baker.
\newblock Approximation properties of {$\beta$}-expansions {II}.
\newblock {\em Ergodic Theory Dynam. Systems}, 38(5):1627--1641, 2018.

\bibitem{book-Boyarsky-Gora-1997}
A.~Boyarsky and P.~G\'{o}ra.
\newblock {\em Laws of chaos}.
\newblock Probability and its Applications. Birkh\"{a}user Boston, Inc.,
  Boston, MA, 1997.
\newblock Invariant measures and dynamical systems in one dimension.

\bibitem{Bruin-Carminati-Kalle-2017}
H.~Bruin, C.~Carminati, and C.~Kalle.
\newblock Matching for generalised {$\beta$}-transformations.
\newblock {\em Indag. Math. (N.S.)}, 28(1):55--73, 2017.

\bibitem{Bruin-Keszthelyi-2022}
H.~Bruin and G.~Keszthelyi.
\newblock On co-{$\sigma$}-porosity of the parameters with dense critical
  orbits for skew tent maps and matching on generalised
  {$\beta$}-transformations.
\newblock {\em Indag. Math. (N.S.)}, 33(3):533--545, 2022.

\bibitem{Cabane-1979}
R.~Cabane.
\newblock Sur les transformations lin\'eaires modulo un.
\newblock {\em Ann. Inst. H. Poincar\'e{} Sect. B (N.S.)}, 15(2):187--193,
  1979.

\bibitem{Carminati-Marmi-Profeti-Tiozz-2010}
C.~Carminati, S.~Marmi, A.~Profeti, and G.~Tiozzo.
\newblock The entropy of {$\alpha$}-continued fractions: numerical results.
\newblock {\em Nonlinearity}, 23(10):2429--2456, 2010.

\bibitem{Carminati-Tiozzo-2012}
C.~Carminati and G.~Tiozzo.
\newblock A canonical thickening of {$\Bbb Q$} and the entropy of
  {$\alpha$}-continued fraction transformations.
\newblock {\em Ergodic Theory Dynam. Systems}, 32(4):1249--1269, 2012.

\bibitem{optimal-Dajani-Martijn-Vilmos-Paola-2012}
K.~Dajani, M.~de~Vries, V.~Komornik, and P.~Loreti.
\newblock Optimal expansions in non-integer bases.
\newblock {\em Proc. Amer. Math. Soc.}, 140(2):437--447, 2012.

\bibitem{Dajani-Cor-2002}
K.~Dajani and C.~Kraaikamp.
\newblock From greedy to lazy expansions and their driving dynamics.
\newblock {\em Expo. Math.}, 20(4):315--327, 2002.

\bibitem{Erdos_Horvath_Joo_1991}
P.~Erd{\H{o}}s, M.~Horv{\'a}th, and I.~Jo{\'o}.
\newblock On the uniqueness of the expansions {$1=\sum q^{-n_i}$}.
\newblock {\em Acta Math. Hungar.}, 58(3-4):333--342, 1991.

\bibitem{Erdos_Joo_1992}
P.~Erd{\H{o}}s and I.~Jo{\'o}.
\newblock On the number of expansions {$1=\sum q^{-n_i}$}.
\newblock {\em Ann. Univ. Sci. Budapest. E\"otv\"os Sect. Math.}, 35:129--132,
  1992.

\bibitem{Flatto-Lagarias-1996}
L.~Flatto and J.~C. Lagarias.
\newblock The lap-counting function for linear mod one transformations. {I}.
  {E}xplicit formulas and renormalizability.
\newblock {\em Ergodic Theory Dynam. Systems}, 16(3):451--491, 1996.

\bibitem{Halfin-1975}
S.~Halfin.
\newblock Explicit construction of invariant measures for a class of continuous
  state {M}arkov processes.
\newblock {\em Ann. Probability}, 3(5):859--864, 1975.

\bibitem{Hofbauer-1978}
F.~Hofbauer.
\newblock Maximal measures for piecewise monotonically increasing
  transformations on {$[0,1]$}.
\newblock In {\em Ergodic theory ({P}roc. {C}onf., {M}ath. {F}orschungsinst.,
  {O}berwolfach, 1978)}, volume 729 of {\em Lecture Notes in Math.}, pages
  66--77. Springer, Berlin, 1979.

\bibitem{Hofbauer-1980}
F.~Hofbauer.
\newblock Maximal measures for simple piecewise monotonic transformations.
\newblock {\em Z. Wahrsch. Verw. Gebiete}, 52(3):289--300, 1980.

\bibitem{Hofbauer-1981}
F.~Hofbauer.
\newblock The structure of piecewise monotonic transformations.
\newblock {\em Ergodic Theory Dynam. Systems}, 1(2):159--178, 1981.

\bibitem{Kalle-Langeveld-Maggioni-Munday-2020}
C.~Kalle, N.~Langeveld, M.~Maggioni, and S.~Munday.
\newblock Matching for a family of infinite measure continued fraction
  transformations.
\newblock {\em Discrete Contin. Dyn. Syst.}, 40(11):6309--6330, 2020.

\bibitem{Lv-2013}
M.~L\"{u}.
\newblock Diophantine approximation and beta-expansions over the field of
  formal {L}aurent series.
\newblock {\em J. Approx. Theory}, 174:140--147, 2013.

\bibitem{Ma-Wang-2015}
C.~Ma and S.~Wang.
\newblock Dynamical {D}iophantine approximation of beta expansions of formal
  {L}aurent series.
\newblock {\em Finite Fields Appl.}, 34:176--191, 2015.

\bibitem{Nakada-Natsui-2008}
H.~Nakada and R.~Natsui.
\newblock The non-monotonicity of the entropy of {$\alpha$}-continued fraction
  transformations.
\newblock {\em Nonlinearity}, 21(6):1207--1225, 2008.

\bibitem{Parry-1960}
W.~Parry.
\newblock On the {$\beta $}-expansions of real numbers.
\newblock {\em Acta Math. Acad. Sci. Hungar.}, 11:401--416, 1960.

\bibitem{Parry-1964}
W.~Parry.
\newblock Representations for real numbers.
\newblock {\em Acta Math. Acad. Sci. Hungar.}, 15:95--105, 1964.

\bibitem{Parry-1978}
W.~Parry.
\newblock The {L}orenz attractor and a related population model.
\newblock In {\em Ergodic theory ({P}roc. {C}onf., {M}ath. {F}orschungsinst.,
  {O}berwolfach, 1978)}, volume 729 of {\em Lecture Notes in Math.}, pages
  169--187. Springer, Berlin, 1979.

\bibitem{Renyi-1957}
A.~R\'{e}nyi.
\newblock Representations for real numbers and their ergodic properties.
\newblock {\em Acta Math. Acad. Sci. Hungar.}, 8:477--493, 1957.

\bibitem{Sidorov-2003}
N.~Sidorov.
\newblock Almost every number has a continuum of {$\beta$}-expansions.
\newblock {\em Amer. Math. Monthly}, 110(9):838--842, 2003.

\bibitem{Sun-Li-Ding-2023}
Y.~Sun, B.~Li, and Y.~Ding.
\newblock Fiber denseness of intermediate {$\beta$}-shifts of finite type.
\newblock {\em Nonlinearity}, 36(11):5973--5997, 2023.

\bibitem{Wang-Li-2020}
W.~Wang and L.~Li.
\newblock Simultaneous dynamical {D}iophantine approximation in beta
  expansions.
\newblock {\em Bull. Aust. Math. Soc.}, 102(2):186--195, 2020.

\bibitem{Wilkinson-1974}
K.~M. Wilkinson.
\newblock Ergodic properties of certain linear mod one transformations.
\newblock {\em Advances in Math.}, 14:64--72, 1974.

\bibitem{Wilkinson-1974-1975}
K.~M. Wilkinson.
\newblock Ergodic properties of a class of piecewise linear transformations.
\newblock {\em Z. Wahrscheinlichkeitstheorie und Verw. Gebiete}, 31:303--328,
  1974/75.

\end{thebibliography}

\end{document}